\documentclass[10pt,reqno]{amsart}

\usepackage[a4paper,asymmetric]{geometry}
\usepackage{latexsym, amsmath, amssymb, esint}
\usepackage{bm}
\usepackage{graphics,epsfig,graphicx,graphics}
\usepackage{color, xcolor}
\usepackage{tikz}
\usepackage[hyperpageref]{backref}
\usetikzlibrary{patterns}
\usepackage[english]{babel}
\usepackage[colorlinks=true,urlcolor=blue, citecolor=red,linkcolor=blue,
linktocpage,pdfpagelabels, bookmarksnumbered,bookmarksopen]{hyperref}
\setlength{\textwidth}{140mm} \setlength{\textheight}{250mm}

\newtheorem{theorem}{Theorem}[section]
\newtheorem{cor}[theorem]{Corollary}
\newtheorem{lemma}[theorem]{Lemma}
\newtheorem{prop}[theorem]{Proposition}

\theoremstyle{definition}


\newcommand{\ep}{\varepsilon}
\newcommand{\RR}{\mathbb{R}}
\newcommand{\rn}{\RR^N}

\newcommand{\bd}{\partial}

\newcommand{\bho}{B_{H_0}}
\newcommand{\bhor}[1]{B_{H_0}({#1})}
\newcommand{\bhorx}[2]{B_{H_0}({#2},{#1})}

\newcommand{\diver}{{\rm div}}

\newcommand{\haus}{\mathcal{H}^{N-1}}
\newcommand{\Huno}{{\sf M}}
\newcommand{\hnorm}{{\mathcal{I}_p}}

\newcommand{\lam}{\lambda}
\newcommand{\la}{\lambda}

\newcommand{\nxi}{\nabla_\xi}

\newcommand{\Om}{\Omega}

\newcommand{\pa}{\partial}
\newcommand{\pCap}{{\sf{Cap}}_p^H}

\newcommand{\pesp}{\frac{p-N}{p-1}}

\newcommand{\sdueij}[1]{S^2_{ij}(#1)}

\newcommand{\tr}{{\sf{Tr}}}

%

\numberwithin{equation}{section}

\begin{document}
\title[Wulff shape characterizations]{Wulff shape characterizations in overdetermined anisotropic elliptic problems}

\author[C. Bianchini]{Chiara Bianchini}
\author[G. Ciraolo]{Giulio Ciraolo}

\address{C. Bianchini, Dipartimento di Matematica e Informatica ``U. Dini'', Universit\`a degli Studi di Firenze, Viale Morgagni 67/A, 50134 Firenze - Italy}
\email{cbianchini@math.unifi.it}

\address{G. Ciraolo,  Dip.to di Matematica e Informatica, Universit\`a degli Studi di Palermo, Via Archirafi 34, 90123, Palermo - Italy}
\email{giulio.ciraolo@unipa.it}

\maketitle

\begin{abstract}
We study some overdetermined problems for possibly anisotropic degenerate elliptic PDEs, including the well-known Serrin's overdetermined problem, and we prove the corresponding Wulff shape characterizations by using some integral identities and just one pointwise inequality. Our techniques provide a somehow unified approach to this variety of problems.
\end{abstract}

\noindent {\footnotesize {\bf AMS subject classifications.} 35N25, 35A23, 35B06.}

\noindent {\footnotesize {\bf Key words.} Overdetermined problems. Finsler manifold. Wulff shapes. Torsion problem. Capacity.}


\section{Introduction}

The aim of this paper is to characterize the shape of a domain in terms of solutions to overdetermined elliptic problems.
In this kind of problems ``too many'' conditions are prescribed at the boundary of the domain and hence, typically, they are not well-posed and the existence of a solution imposes strong restrictions on the shape of the domain where the problem is defined. 

The equations that we consider in this paper arise from the study of some variational problem in a possibly anisotropic medium and they are of elliptic type, where the ellipticity may be singular or degenerate. Apart from its mathematical interest, variational problems in anisotropic media naturally arise in the study of crystals and whenever the microscopic environment of the interface of a medium is different from the one in the bulk of the substance so that anisotropic surface energies have to be considered, as well as in noise-removal procedures in digital image processing, crystalline mean curvature flows and crystalline fracture theory (see \cite{BNP,BP,Ch,CFV,CFV2,NP,OBGXY,Tay,TCH,Wu} and references therein).

The study of overdetermined problems started with the seminal paper of Serrin \cite{Se}, where it is proved that if there exists a solution to
\begin{equation} \label{PbSerrin}
\begin{cases}
\Delta u = -1 & \textmd{in } \Omega \,, \\
u=0 & \textmd{on } \partial \Omega \,, \\
u_\nu=c & \textmd{on } \partial \Omega \,,
\end{cases}
\end{equation}
for some constant $c$ and some bounded domain $\Omega\subset \rn$, then $\Omega$ must be a ball and $u$ is radially symmetric. Here, $u_\nu$ denotes the \emph{inward} normal derivative to $\Omega$. Immediately after \cite{Se}, Weinberger \cite{We} provided a simplified proof of Serrin's result by using some integral identities. These two papers originated two different branches of investigations for symmetry results in overdetermined problems.

Indeed, in \cite{Se} Serrin introduced the PDE's community to the method of moving planes, which was firstly used by Alexandrov in \cite{Al} to prove the well-known \emph{Alexandrov's soap bubble theorem}. The method of moving planes is very flexible and can be used to prove symmetry results for much more general uniformly elliptic equations (see \cite{Se}).
It has been employed also in other types of problems (\cite{CMS,GNN,Re}) and for overdetermined problems in rotationally symmetric spaces different from the Euclidean space (see \cite{KP, Molz}).
However, the method of moving planes does not work (at least applied in a standard way) in manifolds which are not rotationally symmetric such as Finsler manifolds (see below for a more detailed discussion). 

Weinberger's approach has also been generalized in several direction. The main idea of this approach is to use some integral identities and a maximum principle for the so-called P-function.
This approach was refined in \cite{GL} and more recently in \cite{FGK} and \cite{FK} where the symmetry result was proved for a large class of quasilinear equations. We mention that the use of the $P$-function has been employed also for anisotropic spaces in \cite{WX} under quite restrictive assumptions on the regularity of the norm which describes the anisotropy. 

Starting from Weinberger's approach, in \cite{BNST} the authors gave another proof of Serrin's symmetry result, still by using integral identities but not invoking the use of the $P$-function and maximum principle, thus weakening the required regularity on the solution $u$. Indeed, by using some integral identity and just one basic pointwise inequality (Cauchy-Schwarz) on the Hessian of the solution $D^2u$, the authors prove that $D^2u$ is a multiple of the identity matrix, which easily leads to the conclusion. This strategy has been also used in \cite{CS} to extend Serrin's result to the Finsler Laplacian, in \cite{BCS} for the exterior Serrin's problem in anisotropic spaces (see below for a more detalied description) and in \cite{CV} for an overdetermined problem on the round sphere.

In this paper we refine the approach introduced in \cite{BNST} and \cite{CS}.  
More precisely we recover the Wulff shape of the domain by using Alexandrov's theorem and we provide a sort of general scheme which can be applied to several problems. This approach is new even in the Euclidean case, and it allows us to generalize the results in \cite{BCS} \cite{BNST} and \cite{CS} to degenerate operators in an anisotropic setting. In particular we will give symmetry results for interior and exterior overdetermined problems for the $p$-Laplace operator in the Finsler setting (which clearly includes the Euclidean case).

In order to make the statements more clear, we introduce some notation. Given a norm $H$, we say that a set $\Omega$ is Wulff shape of $H$ if $\Omega$ is a level set of the dual norm $H_0$ (see Section \ref{section_preliminaries} below); up to translations, in this case we write $\Omega=\bhor{r}$, where $\bhor{r}=\{x\ :\ H_0(x)<r\}$. In the case $r=1$ we omit the dependecy on $r$, i.e. $\bho=\{x\ :\ H_0(x)<1\}$.

Given a function $H:\rn \to [0,+\infty)$, we define
\begin{equation}\label{V_def}
V(\xi)=\frac 1p\, H^p(\xi) \,,  \quad \xi \in \rn \,.
\end{equation}
We will consider the case when $H$ is a norm in the class
\begin{equation} \label{Ip_def}
\hnorm=\{ H\in C^{2,\alpha} (\rn\setminus\{0\}) \,, V\in C^2_+(\rn\setminus\{0\}) \ \},
\end{equation}
with $p>1$ and for some $\alpha \in (0,1)$, that is a regular norm $H$ where $H^p$ is a twice continuously differentiable function in $\rn\setminus\{0\}$ whose Hessian matrix has positive eigenvalues uniformly bounded away from $0$ (or equivalently with $\bho$ uniformly convex, see discussion in Section \ref{section_norms}).


Our first main result regards a generalization of Problem \eqref{PbSerrin}. More precisely, we consider the minimization problem
\begin{equation} \label{Pb_min_Serrin}
\min_{W_0^{1,p}(\Omega)} \int_\Omega \left(\frac{1}{p} H(\nabla u)^p - u \right) dx   \,,
\end{equation}
where $\Omega \subset \RR^N$ is a bounded domain. It is well known that, if $H \in \hnorm$ then \eqref{Pb_min_Serrin} has a unique solution and the minimizer $u\in W_0^{1,p}(\Omega)$ of \eqref{Pb_min_Serrin} is a weak solution of the Dirichlet problem
\begin{equation}\label{int-pbu}
\begin{cases}
\Delta^H_p u =-1\qquad&\text{in }\Omega \,,\\
u=0\qquad&\text{on }\bd\Omega \,.
\end{cases}
\end{equation}
Here, $\Delta_p^H$ is the \emph{Finsler $p$-Laplacian} (or \emph{anisotropic $p$-Laplacian}) operator which is given by
\begin{equation}\label{pLapl}
\Delta^H_p u= \diver(H^{p-1}(Du)\nabla_{\xi}H(Du)) 
\end{equation}
in the sense of distributions; more precisely, \eqref{int-pbu} reads as
\begin{equation} \label{int-pbu_distr}
\int_\Omega H^{p-1}(Du)\,\langle \nabla_{\xi}H(Du); D \phi \rangle \,dx = \int_\Omega \phi \, dx \,,
\end{equation}
for any $\phi \in C_0^1(\Omega)$.

By a straightforward computation, it is easy to show that if $\Omega= \bhor{r}$ then the solution to \eqref{int-pbu} is given by
\begin{equation}\label{u=}
u(x) =\frac{(p-1)\big(r^{\frac p{p-1}}-H_0^{\frac p{p-1}}(x)\big)}{p N^{\frac 1{p-1}}},\qquad x\in B_{H_0}(r).
\end{equation}
In particular $H(Du)$ is constant on {$\partial \bhor{r}$}. In our first main result we show that the reverse assertion still holds, that is we provide a characterization of the Wulff shape in terms of the solution to \eqref{int-pbu}.

\begin{theorem} \label{thm_serrin_int}
Let $\Omega \subset \RR^N$ be a bounded domain with $\partial \Omega \in C^{2,\alpha}$. Let $H$ be a norm in $\hnorm$, with $p>1$.

If there exists a solution $u$ to \eqref{int-pbu} satisfying
\begin{equation}\label{HDuConst-int}
H(Du)=C \qquad\text{on }\bd\Omega \,
\end{equation}
for some $C>0$, then $\Omega$ is Wulff shape, that is (up to translations) there exists $r>0$ such that $\Omega=\bhor{r}$ and $u$ is given by \eqref{u=}.
\end{theorem}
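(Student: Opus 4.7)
I would follow the scheme of \cite{BNST,CS}, adapted to the degenerate Finsler setting: derive integral identities from the PDE, combine them with a single pointwise matrix Cauchy--Schwarz inequality, and conclude via the anisotropic Alexandrov soap-bubble theorem applied to $\partial\Omega$. The Wulff shape is recovered as the equality case of the pointwise inequality.

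\textbf{Integral identities.} Testing $\Delta^H_p u = -1$ against $u$, using $u|_{\partial\Omega}=0$ and Euler's relation $\xi\cdot\nxi V(\xi)=pV(\xi)=H^p(\xi)$, yields
$$\int_\Omega H^p(Du)\,dx \;=\; \int_\Omega u\,dx.$$
Integrating the PDE itself over $\Omega$, and using that on $\partial\Omega$ (where $|Du|\neq 0$ by Hopf's lemma) one has $Du=-|Du|\,\nu$ with $|Du|=H(Du)/H(\nu)$, together with the $(p-1)$-homogeneity of $\nxi V$, gives
$$|\Omega| \;=\; \int_{\partial\Omega} H^{p-1}(Du)\,H(\nu)\,d\haus \;=\; C^{p-1}\int_{\partial\Omega}H(\nu)\,d\haus.$$
The main identity is a Pohozaev/Rellich-type one, obtained by taking the divergence of a suitably chosen vector field built from $u$, $V(Du)$ and $x$ and integrating over $\Omega$; it expresses a bulk integrand quadratic in $D^2u$ solely in terms of the boundary data $H(Du)|_{\partial\Omega}$ and $H(\nu)$.

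\textbf{Pointwise inequality and conclusion.} On $\{Du\neq 0\}$ set $A := \nxi^2 V(Du)$ (positive definite by $V\in C^2_+(\rn\setminus\{0\})$) and $M := A^{1/2}D^2u\,A^{1/2}$, which is symmetric. Since $\tr M = \tr(A\,D^2u) = \Delta^H_p u = -1$, the trace Cauchy--Schwarz inequality gives $(\tr M)^2 \le N\,\tr(M^2)$, i.e.\ $1\le N\,\tr(M^2)$, with equality iff $M=\lambda I$, equivalently $D^2 u = \lambda A^{-1}$. Integration yields $|\Omega| \le N\int_\Omega \tr(M^2)\,dx$; the Pohozaev identity, together with the overdetermined condition $H(Du)\equiv C$ (which makes every boundary contribution explicit and symmetric), produces the reverse inequality, so equality must hold pointwise a.e.\ in $\Omega$ and $D^2 u = -\tfrac{1}{N} A^{-1}$ throughout. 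Restricting this rigid identity to $\partial\Omega$ and using the standard expression for the anisotropic mean curvature of the level set $\{u=0\}$ in terms of $D^2 u$ and $\nxi H(Du)$ forces $\partial\Omega$ to have \emph{constant} anisotropic mean curvature. The anisotropic Alexandrov theorem then gives $\partial\Omega = \partial\bhor{r}$ up to translation, and uniqueness for \eqref{int-pbu} yields the explicit formula \eqref{u=}.

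\textbf{Main obstacle.} The principal difficulty is the degeneracy of $A=\nxi^2 V(Du)$ at the critical set $\{Du=0\}$: the classical Hessian of $u$ may fail to exist there, and standard $p$-Laplace regularity only provides $u\in C^{1,\beta}(\overline\Omega)\cap C^{2}_{\rm loc}(\{Du\neq 0\})$. Every integration by parts and the identification of the equality case must therefore be performed on $\{Du\neq 0\}$ and then extended across the critical set by a $p$-capacity or measure-theoretic argument; the regularity class $\hnorm$ defined in \eqref{Ip_def} is tailored precisely to make this extension feasible. Verifying the Pohozaev-type identity with the correct boundary contributions when $p\neq 2$ (so that it pairs cleanly with the trace Cauchy--Schwarz estimate) is the main technical step.
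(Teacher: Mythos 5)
Your proposal follows essentially the same scheme as the paper's proof: (a) derive integral identities from the divergence structure of the operator, (b) apply a single matrix Cauchy--Schwarz inequality to a matrix built from $\nxi^2 V(Du)D^2u$, and (c) close by showing the anisotropic mean curvature of $\partial\Omega$ is constant and invoking the anisotropic Alexandrov theorem. The symmetrization $M=A^{1/2}D^2u\,A^{1/2}$ versus the paper's non-symmetric $W=A\,D^2u$ is cosmetic, since $M$ and $W$ are similar matrices and have the same trace, $S_2$, and spectrum; the paper instead invokes a Newton-type inequality for products of symmetric matrices (Lemma \ref{lemma-newton}, from \cite{CS}). You also correctly identify the main technical obstacle (regularity at the critical set $\{Du=0\}$), which the paper handles in Lemmas \ref{lemma_regularity_int}, \ref{lemma-pohoz}, \ref{lemma_ident_int_1} via mollification and the fact that $u\in C^{2,\alpha}$ near $\partial\Omega$.

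There is, however, a real gap in the way you propose to close the argument. You integrate the pointwise inequality $1\le N\tr(M^2)$ to get $|\Omega|\le N\int_\Omega\tr(M^2)\,dx$ and assert the Pohozaev identity plus the boundary condition yield the reverse. This unweighted route does not close. Indeed, integrating $2S_2(W)=\diver\big(S^2_{ij}(W)V_{\xi_i}(Du)\big)$ over $\Omega$ produces a boundary term which, after using \eqref{HDeltap=M}, is $\int_{\partial\Omega}H^{2p-2}(Du)H(\nu)\,\Huno_{\partial\Omega}\,d\haus$ — i.e.\ it involves the unknown anisotropic mean curvature $\Huno_{\partial\Omega}$, which cannot be controlled by the Pohozaev identity (and the relevant Minkowski inequality goes in the wrong direction). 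The paper instead multiplies the $S_2$-divergence identity by $u$ before integrating (Lemmas \ref{lemma_ident_int_1}--\ref{lemma_ident_int}): since $u=0$ on $\partial\Omega$, the bad boundary term disappears, and one obtains \eqref{S2W=equazione-interno_II} whose boundary contribution involves only $H(Du)$ and $H(\nu)$. Combining this with the pointwise Newton inequality, the positivity of $u$, the Pohozaev identity \eqref{pohoz}, and $|\Omega|=C^{p-1}P_H(\Omega)$ then forces equality (Corollary \ref{int-condiz-int}, Lemma \ref{lemmaOmegaWulff-int}). So you must use the $u$-weighted $S_2$ identity; without it the ``reverse inequality'' step fails. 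Your final step (read off $D^2u=-\frac1N A^{-1}$, plug into \eqref{HDeltap=M}, and conclude $\Huno_{\partial\Omega}=\frac{N-1}{N}C^{1-p}$ is constant) is correct and matches the paper.
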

As already mentioned, the proof of Theorem \ref{thm_serrin_int} consists of some integral identities, Cauchy-Schwarz inequality, and it is completed by using Alexandrov's Theorem.
Notice that this approach is new even in the Euclidean case for the usual $p-$Laplace operator for $p\neq 2$. For $p=2$, Theorem \ref{thm_serrin_int} has been proved in \cite{CS} under the weaker assumption that the boundary of $\Omega$ is of class $C^1$ (see also \cite{WX} for $H \in C^4(\rn \setminus \{ 0 \})$). The case $p \neq 2$ has been investigated in \cite{LLZ} for $H \in C^4(\rn \setminus \{ 0 \})$ following the ideas in \cite{FK,FGK,WX}; however, in \cite{LLZ} it is not clear how the approximation argument used for the $P$-function in \cite{LLZ} can exclude that the $P$-function attains the maximum at critical points of $u$.
\par
Now we describe the results regarding exterior domains. Here, we are motivated by the study of overdermined problems for the anisotropic $p$-capacity, which is defined by
\begin{equation}\label{pcap}
\pCap(\Omega)=\inf \left\{  \frac 1p\int_{\rn} H^p(D\varphi)\, dx,\ \varphi\in C^{\infty}_0(\rn), \varphi(x)\ge 1 \text{ for }x\in\Omega \right\} \,,
\end{equation}
for $N\geq 3$ and $1<p<N$.
If $H$ is a norm in the class $\hnorm$, the integral operator is strictly convex and \eqref{pcap} admits a unique solution $u$, which satisfies
\begin{equation}\label{ext-pbu}
\begin{cases}
\Delta^H_p u =0\qquad&\text{in }\rn\setminus\overline{\Omega},\\
u=1\qquad&\text{on }\bd\Omega \,,\\
u\to 0\qquad&\text{as } H(x)\to +\infty \,.
\end{cases}
\end{equation}

We mention that, for $p=2$, the Euclidean capacity of a set $\Omega$ measures the capacitance of the set, that is the total charge $\Omega$ can hold while maintaining a given potential energy, with respect to an idealized ground at infinity. 
Analogously ${\sf{Cap}}_2^H(\Omega)$  measures the \emph{anisotropic capacitance} of $\Omega$, that is the total charge the set $\Omega$ can hold  while embedded in an {anisotropic dielectric medium} and maintaining a given potential energy, with respect to an idealized ground at infinity.

The case $p \neq 2$ is also interesting from the physical point of view. Indeed, there are many physical phenomena where the background medium is described by a nonlinear law of the form ${ \bf J} = |{\bf E}|^{p-2} {\bf E}$, $p>1$. For instance, in deformation theory of plasticity ${ \bf E}$ and ${ \bf J} $ represent the infinitesimal strain and stress  \cite{PCS, Suq}, respectively, for nonlinear dielectrics 
problems ${ \bf E}$ and ${ \bf J} $ are the electric field and current \cite{Ein, GNP, LKohn}, respectively, and this types of laws arise also in fluid flows where ${ \bf E}$ and ${ \bf J} $ are the rate of stress and fluid strain \cite{AR, Ruz}.

Symmetry results for problems involving (Euclidean) capacity go back to P\'olya and Szeg\"o and subsequents authors (see for instance \cite{Sz,PS,Mo,Ke,Xiao} and references therein). In \cite{Re} the author used the method of moving planes to analyze Problem (\ref{ext-pbu}) in the Euclidean case and he proved that if there exists a solution of \eqref{ext-pbu} satisfying $|Du|=C$ on $\partial \Omega$, then $\Omega$ must be an Euclidean ball. In the anisotropic setting, the method of moving planes is no more applicable and the result was extended to the anisotropic Laplacian (i.e. for $p=2$) in \cite{BCS}. In this paper, we prove the further generalization to any $1<p<N$ in the anisotropic setting.

\begin{theorem} \label{thm_serrin_ext}
Let $\Omega$ be a bounded convex domain of $\rn$ with boundary of class $C^{2,\alpha}$ and let $H$ be a norm in $\hnorm$, $1<p<N$. If there exists a solution $u$ to \eqref{ext-pbu} such that
\begin{equation} \label{HDuConst-ext}
H(Du)=C \quad \textmd{on } \partial \Omega \,,
\end{equation}
then $\Omega$ is Wulff shape, that is (up to translations) there exists $R>0$ such that $\Omega=\bhor{R}$ and $u$ is given by
\begin{equation} \label{uexplext}
u(x)= \Big(\frac{H_0(x)}{R}\Big)^\frac{p-N}{p-1} \,.
\end{equation}
\end{theorem}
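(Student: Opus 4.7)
The plan is to follow the unified scheme advertised in the introduction: reduce Theorem~\ref{thm_serrin_ext} to a rigidity statement obtained by combining two integral identities with a single pointwise Cauchy--Schwarz inequality, and then conclude via the anisotropic Alexandrov theorem.

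I would first linearize the problem through the change of variable $v:=R\,u^{(p-1)/(p-N)}$, with $R>0$ to be fixed. Since $(p-N)/(p-1)<0$ and $0<u\le 1$ in $\rn\setminus\overline{\Omega}$, the function $v$ is well defined, positive, and attains the constant value $R$ on $\bd\Omega$; moreover on the explicit solution \eqref{uexplext} one has exactly $v(x)=H_0(x)$. Using Euler's identity $\nxi H(\xi)\cdot\xi=H(\xi)$, a direct computation transforms \eqref{ext-pbu} into
\begin{equation*}
\Delta^H_p v=(N-1)\,\frac{H^p(Dv)}{v}\ \text{ in }\rn\setminus\overline{\Omega},\qquad v=R\text{ on }\bd\Omega,\qquad v/H_0\to 1\text{ as }H_0\to\infty,
\end{equation*}
while \eqref{HDuConst-ext} becomes $H(Dv)=c_0$ on $\bd\Omega$ for an explicit $c_0>0$. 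Comparison with the model $v=H_0$ also guarantees that $Dv$ does not vanish on $\rn\setminus\overline{\Omega}$, so $v\in C^{2,\alpha}_{\mathrm{loc}}$ and the matrix $\W{x}$ is well defined.

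Next, I would derive two integral identities on $(\rn\setminus\overline{\Omega})\cap\{v<T\}$ starting from this PDE. The first comes from testing against a multiple of $v-R$: after integration by parts one recovers the anisotropic capacity $\pCap(\Omega)$ plus a boundary contribution that depends only on $C$ and $R$. The second is a Pohozaev-type identity obtained by contracting the equation with $\nxi V(Dv)$; it produces a bulk term involving $\tr\bigl((\W{x})^2\bigr)$ and $\bigl(\tr(\W{x})\bigr)^2$, together with further boundary integrals on $\bd\Omega$ that again depend only on $C$, $R$ and $\pCap(\Omega)$. The asymptotics $v\sim H_0$, $Dv\sim\nxi H_0$ at infinity, following from standard regularity for degenerate anisotropic operators combined with comparison against the model, are used to show that all contributions on $\{v=T\}$ either vanish or cancel between the two identities as $T\to\infty$. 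A suitable linear combination then rewrites, modulo the PDE for $v$, as
\begin{equation*}
\int_{\rn\setminus\overline{\Omega}}\Bigl\{\tr\bigl((\W{x})^2\bigr)-\tfrac{1}{N-1}\bigl(\tr(\W{x})\bigr)^2\Bigr\}\,w(x)\,dx\le 0,
\end{equation*}
with a strictly positive weight $w$. By Cauchy--Schwarz on symmetric matrices the integrand is nonnegative, so it must vanish a.e.; together with the equation for $v$ this forces $\W{x}$ to agree pointwise with the corresponding matrix for $v=H_0$. Propagating this rigidity to $\bd\Omega$ gives constant anisotropic mean curvature of $\bd\Omega$ with respect to $H$, and the anisotropic Alexandrov theorem (applicable because $\Omega$ is convex with $\bd\Omega\in C^{2,\alpha}$) yields $\bd\Omega=\bd\bhor{R}$ up to translations. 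Formula \eqref{uexplext} then follows from uniqueness for \eqref{ext-pbu}.

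The main obstacle is controlling the boundary terms at infinity. In contrast with the interior Theorem~\ref{thm_serrin_int}, each individual integral identity contains contributions over $\{v=T\}$ that decay only at the borderline rate, so the cancellation between them must be tracked at leading order and requires sharp pointwise asymptotics not only for $v$ and $Dv$ but effectively also for $D^2v$. A secondary technical point is to pinpoint the exact linear combination of test functions that extracts the Cauchy--Schwarz slack cleanly: this is the refinement over the $p=2$ anisotropic case of \cite{BCS}, where the algebra is considerably simpler, and the gain over the $P$-function maximum principle of \cite{WX,LLZ}, whose approximation at critical points of $u$ is delicate.
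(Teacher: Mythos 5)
Your high-level plan (change of variables to a $1$-homogeneous profile, integral identity, Cauchy--Schwarz/Newton on a matrix $W=\nxi^2V(Dv)D^2v$, constant anisotropic mean curvature, Alexandrov) matches the paper's scheme, and your substitution $v=Ru^{(p-1)/(p-N)}$ (so that $v=H_0$ on the model) is a legitimate alternative to the paper's $v=u^{p/(p-N)}$; your PDE $\Delta_p^Hv=(N-1)H^p(Dv)/v$ is correct. However, there are two genuine gaps.

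First, the pointwise inequality is mis-stated. The matrix $W$ is the product of a positive-definite symmetric matrix and a symmetric matrix, hence in general \emph{not} symmetric, and the correct Newton/Cauchy--Schwarz bound (the paper's Lemma~\ref{lemma-newton}) is $S_2(W)\le\frac{N-1}{2N}(\tr W)^2$, equivalently $\tr(W^2)\ge\frac1N(\tr W)^2$. You wrote $\tr(W^2)-\frac{1}{N-1}(\tr W)^2\ge0$, which is false in general; with that coefficient the integrand in your display is not sign-definite and the rigidity conclusion does not follow. The coefficient must be $1/N$, and the inequality must be invoked for a not-necessarily-symmetric product matrix, with the equality case characterized separately.

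Second, and more seriously, the passage from the Neumann condition $H(Du)=C$ to the inequality $\int_{\rn\setminus\overline\Omega}(\cdots)\le0$ is not a matter of finding ``the exact linear combination of test functions.'' In the paper this step is geometric: one first integrates the $S_2$-identity (weighted by $v^{1-N}$) and applies Newton's inequality to obtain, \emph{unconditionally}, the one-sided bound
$\int_{\bd\Omega}H(\nu)H^{2p-1}(Du)\bigl(\tfrac{\Huno_\Omega}{N-1}-\tfrac{p-1}{p-N}\tfrac{H(Du)}u\bigr)\ge0$ (Corollary~\ref{ext-condiz-int}); then the overdetermined datum $H(Du)=C$, together with the a priori identity $C=\frac{N-p}{N(p-1)}\frac{P_H(\Omega)}{|\Omega|}$ (Appendix~\ref{appendix-valoreC}) and the \emph{anisotropic Minkowski inequality} $\int_{\bd\Omega}H(\nu)\frac{\Huno_\Omega}{N-1}\le\frac1N\frac{P_H(\Omega)^2}{|\Omega|}$, forces the reverse inequality, hence equality in Newton's inequality. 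Your proposal omits both the computation of $C$ and the use of the Minkowski inequality, and without them the argument does not close. Finally, even once $W(x)=\lambda(x)\,Id$, you still need to show $\lambda$ is constant; the paper does this by noting $W=D_x\nxi V(Dv)$ so $V_{\xi_i}(Dv)=f_i(x_i)$, whence $\lambda=f_i'(x_i)$ is the same for all $i$, hence constant. This step is implicitly skipped by asserting $W$ ``agrees pointwise with the corresponding matrix for $v=H_0$.''
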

For $p=2$, a physical interpretation of Theorem \ref{thm_serrin_ext} is the following: the Wulff shape is the unique shape a conductor can have  if, while embedded in an anisotropic dielectric and maintaining a given potential energy (with respect to an idealized ground at infinity), the intensity of the electrostatic field is constant on its boundary.

Notice that the reverse assertion is also true. More precisely, if $\Omega$ is Wulff shape then a straightforward computation shows that $u$ is given by \eqref{uexplext} and $H(Du)$ is constant on $\bd\Omega$. Hence Theorem \ref{thm_serrin_ext} gives a complete characterization of the Wulff shape for Problem (\ref{ext-pbu})-(\ref{HDuConst-ext}).

As a byproduct of the technique used for proving Theorem \ref{thm_serrin_ext}, we can tackle another overdetermined problem in exterior domains for the $p$-capacity which was recently considered in \cite{AM} in the Euclidean case and for $p=2$ (see also \cite{BiCiPali}). In the following, $\Huno_{\partial \Omega}$ denotes the anisotropic mean curvature of $\partial \Omega$ (see \eqref{MHOm} below for its definition). 

\begin{theorem} \label{thm_agomaz}
Let $\Omega$ be a bounded convex domain of $\rn$ of class $C^{2,\alpha}$; let $H$ be a norm in $\hnorm$, $1<p<N$. If there exists a solution $u$ to (\ref{ext-pbu}) such that
	\begin{equation}\label{agomazz-u}
		\int_{\bd\Omega}H(\nu)H^{2(p-1)}(Du)\left(\frac{\Huno_{\partial \Omega}}{N-1}- \frac{p-1}{p-N}\frac{H(Du)}{u}\right)\le 0,
	\end{equation}
	then $\Omega$ is Wulff shape.
\end{theorem}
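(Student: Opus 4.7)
The plan is to re-use the integral identity established en route to Theorem~\ref{thm_serrin_ext}. The key observation is that this identity has a boundary contribution coinciding (up to a positive constant) with the left-hand side of \eqref{agomazz-u} and a volume contribution whose sign is manifestly non-negative via a Cauchy--Schwarz argument. The two combine so that the identity implicitly contains the Willmore-type inequality
\[
\int_{\bd\Omega}H(\nu)H^{2(p-1)}(Du)\left(\frac{\Huno_{\partial \Omega}}{N-1}- \frac{p-1}{p-N}\frac{H(Du)}{u}\right)d\haus \ \ge\ 0,
\]
valid for every convex $\Omega$, with equality characterizing the Wulff shape. Hypothesis \eqref{agomazz-u} then forces the equality case and hence the rigidity conclusion.

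Concretely, I would introduce the auxiliary function $v=u^{(p-1)/(p-N)}$, which on the Wulff shape $\bhor{R}$ reduces to $v(x)=H_0(x)/R$. From \eqref{ext-pbu} one derives the PDE satisfied by $v$, whose structure makes $\W{x}$ a natural object to compare, matrix-wise, with a pointwise ``isotropic'' model. Next, one establishes an integral identity on $\rn\setminus\Omega$ of the schematic shape
\[
c_{N,p}\int_{\rn\setminus\Omega} \mathcal{D}\bigl(\W{x},Dv\bigr)\,w(v)\,dx
=\int_{\bd\Omega} H(\nu)\,H^{2(p-1)}(Du)\left(\frac{\Huno_{\partial\Omega}}{N-1}-\frac{p-1}{p-N}\frac{H(Du)}{u}\right)d\haus,
\]
where $c_{N,p}>0$ is a dimensional constant, $w(v)>0$ is an explicit weight, and $\mathcal{D}\ge 0$ is a Cauchy--Schwarz-type defect vanishing precisely on the configurations corresponding to Wulff level sets. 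The contribution at infinity vanishes thanks to the decay $u\sim H_0^{(p-N)/(p-1)}$ and $|Du|\sim H_0^{(p-N)/(p-1)-1}$.

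Combining the identity with hypothesis \eqref{agomazz-u}, the volume integral on the left is non-negative while the surface integral on the right is non-positive, so both vanish: in particular $\mathcal{D}\equiv 0$ in $\rn\setminus\Omega$. Arguing as in \cite{BNST, CS}, the pointwise rigidity forces $v$ to be affine in $H_0$ (up to translation), that is $v(x)=H_0(x-x_0)/R$ for some $x_0\in\rn$ and $R>0$. Hence $\bd\Omega=\{v=1\}=\bd\bhorx{R}{x_0}$ and $u$ has the explicit form \eqref{uexplext}.

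The main technical obstacle is producing the integral identity with boundary term matching \eqref{agomazz-u} exactly. The Rellich--Pohozaev-type manipulation developed for Theorem~\ref{thm_serrin_ext} benefits there from the overdetermined condition $H(Du)=C$ to simplify the boundary term; here that simplification is unavailable, so the full boundary contribution has to be carried through and recognized as \eqref{agomazz-u} after additional tangential integration by parts involving the anisotropic shape operator of $\bd\Omega$. Convexity of $\Omega$ is crucial to control sign-indeterminate curvature terms, justify the decay estimates at infinity, and ensure integrability of $\mathcal{D}$ up to $\bd\Omega$ under the $C^{1,\beta}$-regularity of Finsler $p$-harmonic functions for possibly degenerate $p\neq 2$.
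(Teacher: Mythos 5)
Your overall strategy is the paper's strategy: re-use the integral identity built for Theorem~\ref{thm_serrin_ext} (Lemma~\ref{lemma_ident_ext}), apply Lemma~\ref{lemma-newton} to obtain the Willmore-type inequality of Corollary~\ref{ext-condiz-int}, observe that hypothesis~\eqref{agomazz-u} is its reverse and hence forces equality, then invoke the rigidity to get $H(Du)$ constant on $\partial\Omega$ via Lemma~\ref{lemmaOmegaWulff-ext} and conclude from Theorem~\ref{thm_serrin_ext}. That much you have right. However, two of your technical choices are incorrect and, as written, would sink the argument.

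\textbf{Wrong auxiliary power.} You take $v=u^{(p-1)/(p-N)}$, chosen so that on the Wulff shape $v=H_0/R$. The paper instead takes $v=u^{p/(p-N)}$ (see~\eqref{v_def}), so that $v=(H_0/R)^{p/(p-1)}$ on the Wulff shape. This is not a cosmetic difference. With your choice, on the Wulff shape $D^2v = D^2H_0/R$, and by the homogeneity relation~\eqref{Hhomog2} (applied to $H_0$) one has $D^2H_0(x)\,x=0$. Therefore $W=\nxi^2 V(Dv)\,D^2v$ annihilates the radial direction, $Wx=0$, while $\tr W = \Delta^H_p v = (N-1)\,H^p(Dv)/v \neq 0$. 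Hence $W$ is \emph{not} a multiple of the identity on the Wulff shape, Newton's inequality~\eqref{newtonIneq} is \emph{strict} there, and the ``Cauchy--Schwarz defect'' you want to vanish precisely on Wulff configurations does not vanish. The approach therefore fails on its own model case. The exponent $p/(p-N)$ is exactly what produces a non-degenerate $D^2v$ and $W=\lambda\,Id$ on $\bhor{R}$: one checks, using~\eqref{HDHo} (with $H$ and $H_0$ swapped) and the $(p-1)$-homogeneity of $\nxi V$, that $Wx = \nxi V(Dv) = \bigl(\tfrac{p}{p-1}\bigr)^{p-1}x$.

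\textbf{Over-reach in the rigidity step.} Even with the correct $v$, the pointwise rigidity $W=\lambda\,Id$ does \emph{not} imply that $v$ is affine in $H_0$ around some $x_0$, as you claim at the end. For $p\neq 2$ the identity $D^2v=\lambda\,(\nxi^2 V(Dv))^{-1}$ is a genuinely nonlinear relation (the right-hand side depends on $Dv$), and it does not integrate directly to $v(x)=H_0(x-x_0)/R$. This is explicitly flagged in the introduction: for $p=2$ one can conclude at Step~2 because $D^2v$ itself becomes a multiple of the identity, but this fails for $p\neq 2$. The paper instead extracts from $W=\lambda\,Id$ and~\eqref{ext-pbv} that $H(Dv)$ is constant on level sets of $v$, hence on $\partial\Omega$, i.e.\ $H(Du)$ is constant there, and then concludes via Theorem~\ref{thm_serrin_ext} (whose final ingredient is the anisotropic Alexandrov Theorem~\ref{AleksandrovThm}, applied after showing $\Huno_{\partial\Omega}$ is constant). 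Your shortcut skips that chain, and would need the boundary constancy of $H(Dv)$ plus Alexandrov to be made rigorous.

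One further small point: the sign convention needs care. Since $1<p<N$ and $u>0$, the power $m=p/(p-N)$ is negative, so $Dv$ and $Du$ point in opposite directions, and the boundary integrand must be tracked with the relations $H(Dv)=\tfrac{p}{N-p}\,u^{N/(p-N)}H(Du)$ and $v^{\gamma}$ with $\gamma=1-N$; Corollary~\ref{ext-condiz-int} carries this out and yields precisely the surface integral appearing in~\eqref{agomazz-u}.
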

In the Euclidean case for $p=2$, Theorem \ref{thm_agomaz} was proved in \cite{AM} by using a conformal mapping method. Here, we generalize the approach in \cite{BiCiPali} to the Finsler setting and for the $p-$capacity problem.
Notice that, even in the Euclidean case, the method of moving planes does not seem to be suitable for proving Theorem \ref{thm_agomaz} since the overdetermined condition \eqref{agomazz-u} is of nonlocal type.

As for Theorem \ref{thm_serrin_int}, the proofs of Theorems \ref{thm_serrin_ext} and \ref{thm_agomaz} are based on an integral identity involving the second elementary symmetric function of a matrix $W$ related to the Hessian of a suitable power of the solution, combined with Cauchy-Schwarz inequality.
In particular, the idea is to show that the overdetermined conditions \eqref{HDuConst-ext} and \eqref{agomazz-u} are in fact equivalent to ask that such a matrix $W$ attains the equality sign in Cauchy-Schwarz inequality. This gives a strong condition on the structure of $u$ which implies that the anisotropic mean curvature of $\partial \Omega$ is constant, that is $\Omega$ is Wulff shape. We clarify our approach in the following subsection.

\subsection{{Framework of the proofs}} \label{subsect_ideas}

We briefly describe the framework of the proofs of Theorems \ref{thm_serrin_int}, \ref{thm_serrin_ext} and \ref{thm_agomaz}.
All these proofs make the use of a common approach which we explain in the following.

Let $\Omega \subset \rn$ be a bounded domain and let $u$ be the solution to
\begin{equation} \label{general_eq}
\begin{cases}
\Delta_p^H u = c_1 & \textmd{in } D \,, \\
u=c_2 & \textmd{on } \partial D\,,
\end{cases}
\end{equation}
where either $D= \Omega$ or $D= \rn\setminus\overline{\Omega}$, and $c_1$ and $c_2$ are constants (if $D=\rn\setminus\overline{\Omega}$ a condition at infinity is also given). Assume to know that
%
%
$$
\textit{there exists } m \in \RR \ \textit{ such that if } \  \Omega =   \bho \ \textit{ then }  \ u^m (x)=
a + b H_0^{\frac{p}{p-1}} (x) \textit{ for some } a,b \in \RR
$$
(notice that we will set $m=1$ for \eqref{int-pbu} and $m=p/(p-N)$ for \eqref{ext-pbu}).

For $w=u^m$ we define $W=\nxi^2 V(Dw) D^2w$, where $V(\cdot)=H^p(\cdot)/p$, so that
$$
\tr(W)=\Delta_p^H w \,. 
$$
From \eqref{general_eq} we find that $w$ solves an equation of the form
$$
\tr(W)= b(w,H(\nabla w))
$$
for some function $b$. Then we apply the following scheme.

\begin{itemize}
\item[\emph{Step 1}.] Starting from a differential identity for the second elementary symmetric function $S_2$ applied to Hessian matrices (see Lemma \ref{lemmaS2W=}), we derive an integral identity for $W$ involving $\tr(W)$ (Lemmas \ref{lemma_ident_int} and \ref{lemma_ident_ext} for the interior and exterior problems, respectively).

\noindent From Cauchy-Schwarz inequality we have that $(N-1)(\tr W)^2 \geq 2N S_2(W)$ and we obtain an integral inequality for $w$ where the equality sign is attained if and only if the matrix $W$ is a multiple of the identity matrix (see Corollaries \ref{int-condiz-int} and \ref{ext-condiz-int}).
\item[\emph{Step 2}.] By using an additional constraint (the overdetermining condition in the original problem for $u$) we prove that the equality sign  holds in the inequality obtained at \emph{Step 1}, and hence $W=\lambda Id$ for some constant $\lambda$.
Since $W=\nxi^2 V(Dw) D^2w$, we have that $D^2w= \lambda (\nxi^2 V(Dw))^{-1}$ (Lemmas \ref{lemmaOmegaWulff-int} and \ref{lemmaOmegaWulff-ext} for the interior and exterior problems, respectively).
\item[\emph{Step 3}.] With all these ingredients at hand we can write the anisotropic mean curvature $\Huno_{\partial \Omega}$ of the boundary of $\Omega$ in terms of $H(Dw)$ and conclude that
$\Huno_{\partial \Omega}$ is constant, which implies the desired Wulff shape characterizations by Alexandrov's theorem.
\end{itemize}


We mention that \emph{Step 1} presents some technical difficulties especially for the interior problem. Indeed, being $u$ and $V$ of class $C^{1,\alpha}$ and $C^{2,\alpha}$ respectively, we achieve \emph{Step 1} by using a careful approximation argument, since more regularity is needed in order to write the pointwise differential identity in Lemma \ref{lemmaS2W=}.

We notice that for Theorems \ref{thm_serrin_int} and \ref{thm_serrin_ext} the overdetermined condition mentioned at \emph{Step 2} is the requirement that $H(Du)$ is constant on $\partial \Omega$. In Theorem \ref{thm_agomaz}, the additional constraint is the condition \eqref{agomazz-u}. 

Let us highlight that in \emph{Step 2} we prove something more. Indeed, in Lemma \ref{lemmaOmegaWulff-int} and Lemma \ref{lemmaOmegaWulff-ext} we show that assuming $H(Du)$ constant on $\bd\Omega$ is equivalent to impose that the equality sign is attained in Cauchy-Schwarz inequality.

We conclude this introduction by noticing that the framework described above is inspired from \cite{CS}, where the authors prove the Wulff shape characterization for the interior problem \eqref{int-pbu} and \eqref{HDuConst-int} when $p=2$. 
However, our approach is not a straightforward generalization of the one in \cite{CS}. Indeed, we study more general (and degenerate) equations, which introduce several technical difficulties especially regarding the regularity of the solutions. 
Moreover, also the general scheme of the proof differs: indeed in \cite{CS} the authors are able to conclude at Step 2, since the equality case in Cauchy-Schwarz inequality implies that $D^2w$ is a multiple of the identity matrix in $\Omega$. This doesn't happen when $p \neq 2$. However, the equality case gives us a strong information on $D^2w$ which, when used on the boundary of $\Omega$, implies that the mean curvature of $\partial \Omega$ is constant.

\subsection*{Organization of the paper}
The paper is organized as follows. In Section \ref{sect_preliminaries} we introduce some notation, recall basic facts on the norms in $\rn$ and describe some useful properties of the elementary symmetric function $S_2$. In Section \ref{sect_preliminaries} we give some preliminary result. At the beginning of Section \ref{sect_prelim_II} we prove a crucial differential identity which will be used in Section \ref{section4} to achieve Step 1 for interior and exterior problems. In Section \ref{section_final_proofs} we complete the proofs of our main theorems. Finally, Appendix \ref{appendix-valoreC} is devoted to compute the value of the constant $C$ which appears in the Neumann boundary constraint (\ref{HDuConst-ext}) and in Appendix \ref{appendix_lower_bound} we give a lower bound on the gradient of the solution of \eqref{ext-pbu}.

\subsection*{Acknowledgments} The authors are indebted to Lorenzo Brasco for the discussions they had together and addressing to \cite{AKM}. The authors warmly thank Andrea Cianchi, Nicola Fusco and Paolo Salani for their remarks.

The authors have been supported by Fir Project 2013 ``Geometrical and qualitative aspects of PDE's'' of MIUR (Italian Ministry of Education) and by the GNAMPA group of Indam.

\section{Miscellanea} \label{sect_preliminaries}

\subsection{{Notation}}

For a subset $\Omega$ of $\rn$ we denote by $|\Omega|$ its volume, 
so that:
$$
|\Omega|=\int_{\Omega}d\mathcal{H}^{N}(x). 
$$
Given a convex set $\Omega$, we denote by $\nu=(\nu^1,\ldots,\nu^N)$ its inner unit normal vector.
For $j \in \{1,\ldots,N\}$, $\nu_j=(\nu_j^1,\ldots,\nu_j^N)$ will indicate the vector of derivatives of $\nu$ with respect to the variable $x_j$.

Given a function $u:\Omega\to\rn$, the gradient $Du$ evaluated at $x \in \Omega$ is the element $Du(x)$ of the dual space of $\rn$. 
Unless otherwise stated, we will use the variable $x$ to denote a point in the ambient space $\RR^N$ and $\xi$ for an element in its dual space. The symbols $D$ and $\nxi$ will denote the gradients with respect to the $x$ and $\xi$ variables, respectively.

We notice that, unless otherwise specified, we adopt the Einstein summation convention.

\subsection{Norms of $\rn$} \label{section_norms}
We consider the space $\rn$ endowed with a generic norm $H: \rn \to \RR$ such that:
\begin{itemize}
	\item[(i)] $H$ is convex;
	\item[(ii)] $H(\xi) \geq 0$ for $\xi \in \rn$ and $H(\xi)=0$ if and only if $\xi=0$;
	\item[(iii)] $H(t\xi) = |t| H(\xi)$ for $\xi \in \rn$ and $t\in \RR$.
\end{itemize}

More precisely we identify the dual space of $\rn$ with $\rn$ itself via the scalar product $\langle \cdot;\cdot\rangle$.
Accordingly the space $\rn$ turns out to be endowed with the dual norm $H_0$ given by
\begin{equation}\label{defH0}
H_0(x)=\sup_{\xi\neq 0}\frac{\langle {x};{\xi}\rangle}{H(\xi)}\quad\text{ for
}x\in\rn\,,
\end{equation}
On the other hand we can define $H$ in terms of $H_0$ as
\begin{equation*}
H(\xi) = \sup_{x\neq 0} \frac{\langle x ; \xi\rangle}{H_0(x)},\quad \xi \in \rn.
\end{equation*}
Notice that $H$ results to be the \emph{support function} (see \cite{Sc}, Section 1.7 for the definition and details) of the unitary ball $B_{H_0}=\{x\in\rn\ :\ H_0(x)< 1\}$ of $H_0$ and, in turn, $H_0$ is the support function of $B_H=\{\xi\in\rn\ :\ H(\xi)< 1 \}$.
The two convex sets $B_{H_0}$ and $B_H$ are both centrally symmetric and they are polar of each other.

As already mentioned, we denote by $\bhor{r}$ the ball centered at $O$ with radius $r$ in the norm $H_0$, i.e.
\begin{equation*}
\bhor{r} = \{x \in \rn:\ H_0(x) < r\}.
\end{equation*}
Analogously, we define 
$$
B_{H}(r)=\{\xi \in \rn:\ H(\xi) < r\} \,.
$$
The sets $\bhor{r}$ (as well as their translations) are named \emph{Wulff shapes} of $H$ and they are in fact homothetic copies of the ball $\bho$. 

From \cite[Corollary 1.7.3]{Sc}, we have that
$H_0 \in C^1(\rn \setminus \{0\})$ if and only if $B_{H}$  is strictly convex.
Moreover, we notice that if $H\in C^2(\RR^N\setminus\{0\})$ and $B_{H}$ is uniformly convex, then the same holds for $H_0$ and $\bho$. As noticed in \cite[Appendix A]{CFV} if $H\in C^2(\RR^N\setminus\{0\})$ and $B_{H}$ is uniformly convex then $H \in \hnorm$. Viceversa if $H \in \hnorm$ then it is easy to prove that 
$$
H_{ij}(\xi) \zeta_i \zeta_j \geq \mu H(\xi)^{-1} |\zeta|^2 \quad \text{for any } \xi \in \rn\setminus\{0\}\,, \ \zeta \in \nabla H(\xi)^\perp \,,
$$
for some $\mu >0$, which implies that $B_H$ is uniformly convex (see again \cite[Appendix A]{CFV}).

Hence if we consider a norm $H\in\hnorm$, with $\hnorm$ given by \eqref{Ip_def} and $p>1$, the corresponding ball $B_H$ and dual ball $B_{H_0}$ are $C^{2,\alpha}$ uniformly convex and centrally symmetric sets. 

Since all norms in $\rn$ are equivalent, there exist positive constants $\sigma_1$ and $\sigma_2$ such that
\begin{equation} \label{norms equiv}
\sigma_1 |\xi| \leq H(\xi) \leq \sigma_2 |\xi|,\quad \xi \in \rn.
\end{equation}

Let $H \in C^1(\rn \setminus \{0\})$, from the homogeneity property (iii)  we have
\begin{equation}\label{Hhomog1}
\langle\nxi H(\xi);\xi\rangle = H(\xi),\quad \xi \in \rn,
\end{equation}
where the left hand side is taken to be $0$ when $\xi = 0$.
Moreover, if $H \in C^2(\rn \setminus \{0\})$, then
\begin{equation}\label{Hhomog2}
	\nxi^2H(\xi)\xi=0.
\end{equation}

The following properties hold provided $H \in C^1(\rn \setminus \{0\})$ and $\bho$ is strictly convex (see \cite[Section 3.1]{CS}):
\begin{equation}\label{HDHo=1}
H(D_\eta H_0(\eta))=1,\quad H_0(\nabla_\xi H(\xi))=1,
\end{equation}
for every $\xi,\eta \in \rn\setminus \{0\}$.
Furthermore, the map $H \nxi H $ is invertible with
\begin{equation}\label{HnablaH inverse}
H\nxi H = (H_0 \nxi H_0)^{-1}.
\end{equation}
From \eqref{HDHo=1} and the homogeneity of $H_0$, \eqref{HnablaH inverse} is equivalent to
\begin{equation}\label{HDHo}
H(\xi)\; D_\eta H_0(\nabla_\xi H(\xi)) = \xi \,.
\end{equation}
When $H$ and $H_0$ are of class $C^2(\RR^N\setminus\{0\})$, by differentiating this expression and using (\ref{Hhomog1}) and (\ref{Hhomog2}), we obtain
\begin{equation} \label{miracolo}
\nabla_\xi^2 U\; D_\eta^2 U_0 (\nabla_\xi H) = Id \,,
\end{equation}
for every $\xi\in\rn\setminus\{0\}$, where $U=H^2/2$ and $U_0=H_0^2/2$. More generally, we have the following lemma.

\begin{lemma} \label{lem_inversa_D2V} 
Let $H\in \hnorm$, $p>1$, and let $H_0$ be its dual norm. Set $V=H^p/p$. For $\xi \in \rn$ and $\xi \neq 0$, we have the following identity
\begin{eqnarray}\label{eq_inversa_D2V}
\qquad (\nxi^2 V(\xi))^{-1}_{ij}=\frac { H^{2-p}(\xi)}{p-1}   \left( \partial_{\eta_i} H_0 (\nxi H(\xi))\partial_{\eta_j}H_0(\nxi H(\xi))  + (p-1) \partial^2_{\eta_i \eta_j} H_0 (\nxi H(\xi)) \right).
%
\end{eqnarray}
\end{lemma}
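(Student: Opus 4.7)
The plan is to reduce the identity to the $p=2$ case, which is exactly the content of \eqref{miracolo}, via a Sherman--Morrison rank-one update. Setting $U = H^2/2$ and $U_0 = H_0^2/2$, direct differentiation gives $\nxi^2 U(\xi) = H(\xi)\,\nxi^2 H(\xi) + \nxi H(\xi) \otimes \nxi H(\xi)$, while the product rule yields $\nxi^2 V(\xi) = H^{p-1}(\xi)\,\nxi^2 H(\xi) + (p-1)\,H^{p-2}(\xi)\,\nxi H(\xi) \otimes \nxi H(\xi)$. Eliminating $\nxi^2 H$ between these two expressions produces the convenient decomposition
\[
\nxi^2 V(\xi) = H^{p-2}(\xi)\bigl[\,\nxi^2 U(\xi) + (p-2)\,\nxi H(\xi) \otimes \nxi H(\xi)\,\bigr],
\]
which for $p=2$ collapses to $\nxi^2 U$ and for $p \neq 2$ is a rank-one update of it.

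The next step is to invert the bracket by Sherman--Morrison. Two quantities are needed: $(\nxi^2 U)^{-1}\,\nxi H$ and the scalar $\nxi H \cdot (\nxi^2 U)^{-1}\,\nxi H$. By \eqref{miracolo}, $(\nxi^2 U(\xi))^{-1} = D_\eta^2 U_0(\nxi H(\xi))$, and both quantities simplify drastically via Euler's homogeneity identities: since $U_0$ is $2$-homogeneous, $D_\eta^2 U_0(\eta)\,\eta = D_\eta U_0(\eta) = H_0(\eta)\,D_\eta H_0(\eta)$, and since $H_0$ is $1$-homogeneous, $\eta \cdot D_\eta H_0(\eta) = H_0(\eta)$. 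Evaluating at $\eta = \nxi H(\xi)$ and using $H_0(\nxi H(\xi)) = 1$ from \eqref{HDHo=1}, one obtains $(\nxi^2 U(\xi))^{-1}\,\nxi H(\xi) = D_\eta H_0(\nxi H(\xi))$ and $\nxi H(\xi) \cdot (\nxi^2 U(\xi))^{-1}\,\nxi H(\xi) = 1$. Consequently the Sherman--Morrison denominator is $1 + (p-2) = p-1$, nonzero by hypothesis $p > 1$, so the formula applies and gives
\[
\bigl[\nxi^2 U(\xi) + (p-2)\,\nxi H(\xi) \otimes \nxi H(\xi)\bigr]^{-1} = D_\eta^2 U_0(\nxi H(\xi)) - \frac{p-2}{p-1}\,D_\eta H_0(\nxi H(\xi)) \otimes D_\eta H_0(\nxi H(\xi)).
\]

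To finish, I would expand $D_\eta^2 U_0(\nxi H(\xi)) = D_\eta H_0(\nxi H(\xi)) \otimes D_\eta H_0(\nxi H(\xi)) + D_\eta^2 H_0(\nxi H(\xi))$ (again using $H_0(\nxi H(\xi)) = 1$), combine with $1 - (p-2)/(p-1) = 1/(p-1)$, and multiply by the overall prefactor $H^{2-p}(\xi)$ arising from inverting the scalar $H^{p-2}(\xi)$ in the decomposition above. This produces exactly the right-hand side of \eqref{eq_inversa_D2V}. The main obstacle is mere bookkeeping, essentially keeping track of where each quantity is evaluated ($\xi$ versus $\nxi H(\xi)$); the only hypothesis genuinely in play is $p > 1$, which guarantees the non-vanishing of the Sherman--Morrison denominator.
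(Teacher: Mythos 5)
Your proof is correct, and it takes a genuinely different route from the paper's. The paper proceeds by guess-and-check: it names the matrix $A$ equal to $(p-1)$ times the claimed inverse, computes the product $A(\xi)\,\nxi^2 V(\xi)$ directly, and reduces it to $(p-1)\delta_{ij}$ by repeated use of the Euler identities \eqref{Hhomog1}, \eqref{Hhomog2} for both $H$ and $H_0$ together with \eqref{miracolo}. You instead proceed constructively: the decomposition $\nxi^2 V(\xi) = H^{p-2}(\xi)\bigl[\nxi^2 U(\xi) + (p-2)\,\nxi H(\xi)\otimes\nxi H(\xi)\bigr]$ cleanly isolates the $p=2$ Hessian and exhibits the remainder as a rank-one update, which you then invert by Sherman--Morrison. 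Both arguments funnel through the same two facts --- \eqref{miracolo} and the Euler identities at $\eta = \nxi H(\xi)$ where $H_0 = 1$ --- so neither is more demanding on hypotheses (in both cases the invertibility of $\nxi^2 U$ encoded in \eqref{miracolo} is the structural input, and this is guaranteed by $H\in\hnorm$). The paper's computation is shorter and more elementary, but it gives no hint of where the formula comes from. Your route derives the formula rather than verifying it, makes the dependence on $p$ transparent as a one-parameter rank-one deformation of the $p=2$ case, and pleasingly explains the prefactor: the $p-1$ in the denominator of \eqref{eq_inversa_D2V} is literally the Sherman--Morrison denominator $1 + (p-2)\,\nxi H\cdot(\nxi^2 U)^{-1}\nxi H$, which is also why the hypothesis $p>1$ is exactly what is needed for the inverse to exist.
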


\begin{proof}
The formula can be verified by direct calculations. We denote by $A$ the matrix whose entries are
$$
A_{ik}(\xi)= H^{2-p}(\xi)  \left( \partial_{\eta_i} H_0 (\nxi H(\xi))\partial_{\eta_j}H_0(\nxi H(\xi))  + (p-1) \partial^2_{\eta_i \eta_j} H_0 (\nxi H(\xi)) \right) 
$$
and we compute $A(\xi)\;\nxi^2 V(\xi)$:
\begin{multline}\label{ANablaV}
\Big(A(\xi)\;\nxi^2 V(\xi)\Big)_{ij}\\
=H^{2-p}(\xi) H^{p-2} \left( \partial_{\eta_i} H_0 \; \partial_{\eta_k} H_0 +(p-1) 
\partial^2_{\eta_i \eta_k} H_0 \right)\left( (p-1)H_{\xi_k}H_{\xi_j}+HH_{\xi_k\xi_j} \right),
\end{multline}
where $H_0$ is evaluated at $\nxi H(\xi)$ while $H$ is evaluated at $\xi$.
By recalling that homogeneity properties (\ref{Hhomog1}) and (\ref{Hhomog2}) hold for both $H$ and $H_0$, and by using (\ref{HDHo=1}) and (\ref{HDHo}) we have
\begin{eqnarray*}
\partial_{\eta_k} H_0 H_{\xi_k}=H_0(\nxi H)=1, \\
H\partial_{\eta_i} H_0 \; \partial_{\eta_k} H_0 \; H_{\xi_j\xi_k}=  \xi_k H_{\xi_j\xi_k} \partial_{\eta_i} H_0=0,\\
H_{\xi_k}H_{\xi_j}\partial^2_{\eta_i\eta_k} H_0 =0.
\end{eqnarray*}
Hence (\ref{ANablaV}) reads as
$$
\Big(A(\xi)\;\nxi^2 V(\xi)\Big)_{ij}= (p-1) \left( \partial_{\eta_i}H_0 H_{\xi_j} + HH_{\xi_k\xi_j}\partial^2_{\eta_k\eta_i}\right)H_0 = (p-1)\delta_{ij},
$$
where the last identity follows from (\ref{miracolo}).
%
%
\end{proof}

We notice that, in view of \eqref{HDHo}, \eqref{eq_inversa_D2V} can be alternatively written as
\begin{equation} \label{eq_inversa_D2V_II}
(\nxi^2 V(\xi))^{-1}_{ij}=\frac{H^{-p}(\xi)}{p-1}   \left( \xi_i \xi_j  + (p-1) H^2(\xi) \partial^2_{\eta_i \eta_j} H_0(\nxi H(\xi)) \right)  \,.
\end{equation}

\subsection{Finsler Metric}
For a sufficiently regular set $\Om\subset \rn$ we denote by $P_H(\Om)$ its {\emph{anisotropic perimeter}}, or anisotropic surface energy, that is
\begin{equation}\label{anis_surf_energy}
P_H(\Om)=\int_{\pa \Om}H(\nu) d\haus(x).
\end{equation}
It is clear that if $H$ is the Euclidean norm then $P_H(\Om)$ is the usual perimeter of $\Omega$.

Following \cite[formulae (3.3), (3.9)]{BP}, the \emph{anisotropic mean curvature} of $\partial \Omega$, which we shall denote by $\Huno_{\partial \Omega}$, is defined by
\begin{equation} \label{MHOm}  
\Huno_{\partial \Omega}= - \diver(\nxi H(\nu)) = - H_{\xi_i\xi_j}\nu_i^j \,,
\end{equation}
where $\nu$ is the inward unit normal to $\Omega$.

We notice that if $H(\xi)=|\xi|$ then $\Huno$ is the usual mean curvature normalized so that for the Euclidean unit ball $B$ it holds $\Huno_{\partial B}=(N-1)$.

As it is well known, in the Euclidean setting the only compact connected constant mean curvature hypersurfaces without boundary are Euclidean balls (Alexandrov's Theorem). In the Finsler metric an analogous result holds (see \cite{BCS} and \cite{HLMG}).
\begin{theorem}[Anisotropic Alexandrov's Theorem]\label{AleksandrovThm}
	Let $H$ be a norm of $\rn$ in the class $\hnorm$, $p>1$, and let $\pa \Omega$ be a compact connected hypersurface without boundary embedded in Euclidean space of class $C^2$.
	If $\Huno_{\partial \Omega}$ is constant on $ \pa \Omega$ then $\Omega$ is Wulff shape of $H$.
\end{theorem}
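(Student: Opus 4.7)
The classical proof of Alexandrov's theorem via the method of moving planes is unavailable here because a generic norm $H$ is not invariant under Euclidean reflections. My plan is instead to adapt A.\ Ros's integral-identity argument for the Euclidean case to the Finsler setting, using two anisotropic counterparts of the Euclidean Minkowski formula and Heintze-Karcher inequality. It is enough to carry this out for the Finsler Laplacian $\Delta_2^H$ with $V=H^2/2$, since the statement of the theorem depends only on the regularity class of $H$.

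The first ingredient is an anisotropic Minkowski-type identity which, when combined with the standard divergence theorem and the assumption $\Huno_{\partial\Omega}\equiv c$, yields
\begin{equation*}
(N-1)\int_{\partial\Omega}H(\nu)\,d\haus \;=\; c\,N|\Omega|.
\end{equation*}
It is derived by applying the divergence theorem to the vector field $H(\nu(x))\,x$ (with $\nu$ extended smoothly off $\partial\Omega$) and then using the homogeneity identities \eqref{Hhomog1}--\eqref{Hhomog2} together with the definition \eqref{MHOm} of $\Huno_{\partial\Omega}$ to rewrite the boundary integrand.

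The second ingredient is an anisotropic Heintze-Karcher inequality:
\begin{equation*}
(N-1)\int_{\partial\Omega}\frac{H(\nu)}{\Huno_{\partial\Omega}}\,d\haus \;\ge\; N|\Omega|,
\end{equation*}
valid for every bounded $C^2$ domain with $\Huno_{\partial\Omega}>0$, and with equality if and only if $\Omega$ is Wulff shape. The cleanest route I know solves the anisotropic torsion problem $\Delta_2^H w=-1$ in $\Omega$, $w=0$ on $\partial\Omega$, writes a Reilly-type integral identity for $w$, and applies the elementary Cauchy-Schwarz bound $(\tr W)^2 \le N|W|^2$ to the matrix $W=\nxi^2 V(Dw)\,D^2 w$ that will also feature in the authors' Step~1. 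The boundary term, which carries $\Huno_{\partial\Omega}$, is in turn estimated from below via Cauchy-Schwarz and the normalisation $\int_{\partial\Omega}w_\nu\,d\haus = |\Omega|$.

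Putting the two ingredients together, constancy of $\Huno_{\partial\Omega}$ forces equality in the Heintze-Karcher inequality, and its rigidity case then delivers the Wulff-shape conclusion. The hard step is precisely this rigidity statement: the equality case of Cauchy-Schwarz only says $W=\la(x)\,Id$ at each point, and one has to promote this pointwise structural information into the global geometric conclusion that $\Omega$ is a level set of $H_0$. This is achieved by combining $W=\la\,Id$ with Lemma~\ref{lem_inversa_D2V} to extract a closed form for $D^2 w$ and for the level sets of $w$, which then turn out to be homothetic Wulff shapes. A further technical obstacle, of the same flavour as the one the authors themselves flag in their Step~1, is that $V\in C^2_+(\rn\setminus\{0\})$ alone does not provide enough regularity to write the pointwise second-order identities needed for $w$, so a careful approximation argument has to be run throughout.
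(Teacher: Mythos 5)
The paper does not prove this theorem; the sentence preceding it simply cites \cite{BCS} and \cite{HLMG}, so there is no in-paper argument to compare against. What you have proposed is an independent proof in the spirit of Ros, and its overall shape is sound. Two remarks. (i) Your reduction to $p=2$ is legitimate: membership in $\hnorm$ is equivalent (Section~\ref{section_norms}) to $H\in C^{2,\alpha}(\rn\setminus\{0\})$ plus uniform convexity of $B_H$, which is a $p$-independent condition, so one is free to run the argument with $V=H^2/2$. (ii) The derivation of the anisotropic Minkowski identity should not be presented as ``divergence theorem applied to $H(\nu(x))\,x$'': that requires an extension of $\nu$ off $\partial\Omega$ and produces a bulk term depending on the extension. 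The clean route is the tangential divergence theorem on $\partial\Omega$ applied to $\nxi H(\nu)$, using $\Huno_{\partial\Omega}=-\diver_{\partial\Omega}(\nxi H(\nu))$ and $\langle \nxi H(\nu);\nu\rangle=H(\nu)$ (from \eqref{Hhomog1}), as in \cite{HLMG}. Finally, you are right that the crux is the rigidity case of the Heintze--Karcher inequality, and it is essential that you close it, as you propose, through the $p=2$ mechanism of \cite{CS} (namely $W=\lambda\,Id$ together with \eqref{miracolo} forces $w$ to be, up to translation and affine rescaling, $H_0^2$, so $\Omega$ is \emph{a priori} a sub-level set of $H_0$) rather than by deducing ``$\Huno_{\partial\Omega}$ constant'' and invoking the present theorem, as the paper's own Step~3 does for the Serrin problem --- that route would be circular here. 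You gesture at this correctly, but it should be written out so the logical dependence is unambiguous.
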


\subsection{Finsler $p$-Laplacian}
The Finsler or anisotropic $p$-Laplacian (defined by (\ref{pLapl})) of a function $u\in C^2(D)$ can be written as
$$
\Delta^H_p u=H^{p-2}(Du)\Big( (p-1)H_{\xi_i}(Du)H_{\xi_j}(Du) + H(Du) H_{\xi_i\xi_j}(Du) \Big) u_{ij},
$$
for $Du \neq 0$.
If $H$ is a norm in the class $\hnorm$ then the Finsler $p$-Laplacian is an elliptic operator and it satisfies
\begin{equation}\label{ellitticita}
\begin{aligned}
& H^{p-2}(\xi)\Big( (p-1)H_{\xi_i}(\xi)H_{\xi_j}(\xi) + H(\xi)H_{\xi_i\xi_j}(\xi) \Big)\eta_i\eta_j \ge c |\xi|^{p-2} |\eta|^2,\\ 
&  \sum_{i,j}H^{p-2}(\xi)| (p-1)H_{\xi_i}(\xi)H_{\xi_j}(\xi) + H(\xi)H_{\xi_i\xi_j}(\xi) | \le c^{-1} |\xi|^{p-2} \,,	
\end{aligned}
\end{equation}
for some $c>0$. Moreover a comparison principle (and hence also a maximum principle) for $\Delta_p^H$ can be derived.

\begin{lemma}\label{lemma_comp_princ}
Let $E \subset \rn$ be a bounded domain and $H$ be a norm in the class $\hnorm$, $p>1$. Let $u,v \in C^1(\overline{E})$ be such that 
\begin{equation} \label{u_e_v}
\begin{cases} 
-\Delta_p^H u \leq - \Delta_p^H v & \text{in } E \,,\\
u \leq v & \text{on } \partial E\,,
\end{cases}
\end{equation}
in weak sense. Then $u \leq v$ in $\overline{E}$.
\end{lemma}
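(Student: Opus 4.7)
The plan is to carry out a standard monotonicity-plus-test-function argument for the quasilinear operator $\Delta_p^H$, exploiting the fact that the vector field $\xi \mapsto H^{p-1}(\xi)\nxi H(\xi) = \nxi V(\xi)$ is strictly monotone because $V = H^p/p$ is strictly convex on $\rn$ when $H \in \hnorm$.

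First I would set $\phi = (u-v)^+$. Since $u, v \in C^1(\overline{E})$ and $u \leq v$ on $\partial E$, the function $\phi$ belongs to $W^{1,p}_0(E)$ and is admissible as a test function in the weak formulation of \eqref{u_e_v}. Moreover $D\phi = Du - Dv$ on the open set $\{u > v\}$ and $D\phi = 0$ elsewhere. Subtracting the weak inequalities $-\Delta_p^H u \leq -\Delta_p^H v$ and testing against $\phi \geq 0$ yields
\begin{equation*}
\int_{\{u > v\}} \bigl\langle \nxi V(Du) - \nxi V(Dv) , \, Du - Dv \bigr\rangle \, dx \leq 0.
\end{equation*}

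Next I would invoke the strict convexity of $V$. Since $H \in \hnorm$, the Hessian $\nxi^2 V$ is positive definite on $\rn \setminus \{0\}$ and, combined with the $p$-homogeneity of $V$ with $p > 1$, this gives that $V$ is strictly convex on all of $\rn$. Consequently $\nxi V$ is strictly monotone, i.e.
\begin{equation*}
\bigl\langle \nxi V(\xi) - \nxi V(\eta) , \, \xi - \eta \bigr\rangle > 0 \qquad \text{for every } \xi \neq \eta \text{ in } \rn.
\end{equation*}
Therefore the integrand above is nonnegative everywhere, and the inequality forces it to vanish almost everywhere on $\{u > v\}$. By strict monotonicity we conclude $Du = Dv$, i.e. $D\phi = 0$ a.e.\ on $\{u > v\}$.

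Finally I would close the argument by a connectedness step: on each connected component $U$ of the open set $\{u > v\}$, the continuous function $u - v$ has zero gradient, hence is constant. Because $\partial U \subset \overline{E}$ and on $\partial U$ one has $u = v$ (either by definition of $U$ inside $E$, or by the hypothesis $u \leq v$ on $\partial E$), this constant must be zero. This contradicts $u > v$ on $U$ unless $U = \emptyset$, so $\{u > v\} = \emptyset$ and $u \leq v$ on $\overline{E}$.

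The main obstacle is the rigorous verification that $V$ is strictly convex (not merely convex) on all of $\rn$, since the Hessian $\nxi^2 V$ may degenerate or blow up at the origin depending on whether $p > 2$ or $1 < p < 2$. I would handle this by noting that for $\xi \neq \eta$, either the segment $[\xi,\eta]$ avoids $0$ (where $\nxi^2 V > 0$ gives strict convexity directly) or $\eta = s\xi$ with $s \neq 1$, in which case $V((1-t)\xi + t\eta) = ((1-t) + ts)^p V(\xi)$ is strictly convex in $t$ because $p > 1$ and $V(\xi) > 0$.
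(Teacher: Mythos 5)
Your proof is correct and reaches the same conclusion by the same overall scheme — test with $(u-v)^+$, get the monotonicity integral, force it to vanish, conclude — but the central pointwise estimate is handled differently. The paper does not invoke strict convexity of $V$ directly; it uses the norm-specific inequality $\langle \nxi H(\xi);\eta\rangle \leq H(\eta)$ (from \eqref{defH0} and \eqref{HDHo=1}) to derive
$\langle \nxi V(\xi)-\nxi V(\eta);\xi-\eta\rangle \geq \bigl[H^{p-1}(\xi)-H^{p-1}(\eta)\bigr]\bigl[H(\xi)-H(\eta)\bigr]\geq 0$,
first deducing $H(Du)=H(Dv)$ on $\{u>v\}$, and only then returning to \eqref{uv_eq1} with the strict convexity of $H^p$ to finish. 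Your route is the generic one for monotone quasilinear operators: strict convexity of $V$ gives strict monotonicity of $\nxi V$ in one stroke, collapsing the paper's two substeps into one. This is cleaner but moves the burden to verifying strict convexity of $V$ across the origin, where the Hessian degenerates (for $p>2$) or blows up (for $p<2$). Your case split handles this, though it has a small gap: when $\xi=0$ and $\eta\neq 0$ (or vice versa), the segment meets $0$ yet you cannot write $\eta=s\xi$, so neither of your two cases literally applies. The fix is immediate ($V(t\eta)=t^pV(\eta)$ is strictly convex on $[0,1]$ for $p>1$), but it should be stated. Finally, your explicit connectedness step at the end is a welcome addition; the paper compresses it into ``the conclusion follows,'' implicitly relying on a Poincar\'e-type argument for $(u-v)^+\in W_0^{1,p}(E)$ with vanishing gradient.
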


\begin{proof}
Since \eqref{u_e_v} holds in weak sense, we can use $(u-v)_+$ as test function, and hence
\begin{equation} \label{uv_eq1}
\int_{\{u>v\}} \langle H^{p-1}(Du)\, \nxi H(Du) - H^{p-1}(Dv) \,\nxi H(Dv) ; D(u-v) \rangle\,dx \leq 0 \,.
\end{equation}
We notice that \eqref{defH0} and \eqref{HDHo=1} imply
$$
\langle \nxi H(\xi);\eta\rangle \leq H(\eta)
$$
so that from \eqref{Hhomog1} we obtain 
\begin{multline*}
\langle H^{p-1}(Du)\, \nxi H(Du) - H^{p-1}(Dv)\, \nxi H(Dv) ; Du-Dv \rangle \\ = H^{p}(Du) + H^{p}(Dv) - \langle H^{p-1}(Du) \nxi H(Du) ; Dv \rangle - \langle H^{p-1}(Dv) \nxi H(Dv) ; Du \rangle \\
\geq \Big[ H^{p-1}(Du) - H^{p-1}(Dv) \Big]\, [H(Du) - H(Dv)] \geq 0 \,,
\end{multline*}
where in the last inequality we have used that the function $t^{p-1}$ is strictly increasing and hence the strict sign holds whenever $H(Du) \neq H(Dv)$. From \eqref{uv_eq1} we obtain that $H(Du)=H(Dv)$ in $\{u>v\}$, and the conclusion follows by using the strict convexity of $H^p$ and exploiting again \eqref{uv_eq1}. 
\end{proof}

We recall that on the boundary of a super level set $L_t=\{u > t\}$ of a regular function $u$ we have
\begin{equation}\label{HDeltap=M}
\Delta^H_p u = (p-1)H^{p-2}(Du)H_{\xi_k}(Du)H_{\xi_i}(Du)u_{ki}-H^{p-1}(Du)\Huno_{\partial L_t} \quad \text{ on } \partial L_t \,,
\end{equation}
provided that $Du \neq 0$  on $\partial L_t$.
\subsection{Elementary symmetric function of a matrix}\label{section_preliminaries}
Given a matrix $A=(a_{ij})\in\RR^{n\times n}$, for any $k=1,\dots,n$ we denote by $S_k(A)$ the sum of all the principal  minors of $A$ of order $k$.
In particular, $S_1(A)=\tr(A)$, the trace  of $A$, and $S_n(A)= \det (A)$, the determinant of $A$.


We will consider the case $k=2$.
By setting
$$
S^2_{ij}(A)=-a_{ji} + \delta_{ij} \tr A \,,
$$
we can write
\begin{equation}\label{defS2}
S_2(A)=\frac12\sum_{i,j}S^2_{ij}(A)a_{ij}= \frac 12 ((\tr A)^2-\tr(A^2))\,.
\end{equation}

Of particular interest in our approach is $S_2(W)$ where 
$$
W=\nxi^2 V(Dv)D^2v \,,
$$ 
with $V(\xi)=\frac 1pH^p(\xi)$, $H$ a norm in $\hnorm$, $p>1$, and $v$ is a function which will be specified later
($v$ will be either the solution to \eqref{int-pbu} or a suitable power of the solution to \eqref{ext-pbu}).
In this case, since $\tr W=\Delta^H_p v$, it holds
\begin{equation}\label{S2Wij=}
\sdueij{W}= -V_{\xi_j\xi_k}(Dv) v_{ki} +\delta_{ij} \Delta^H_pv \,.
\end{equation}
Moreover in this setting $\sdueij{W}$ is divergence free, in the following (weak) sense (see in \cite[formula (4.14)]{CS} and Lemma \ref{lemma_ident_int_1} in Subsection \ref{subsect_interior} below)
\begin{equation}\label{divS2=0}
\frac{\pa}{\pa x_j}\sdueij{W}=0.
\end{equation}

The elementary symmetric functions of a symmetric matrix $A$ satisfy the so called Newton's inequalities. In particular, we will use that
$$
S_2(A)\le (S_1(A))^2 \,,
$$
which in fact is a straightforward consequence of \eqref{S2Wij=} and Cauchy-Schwarz inequality.
More precisely, we will need a generalization of this inequality to not necessarily symmetric matrices, which is given in the following lemma. We remark that this inequality, together with the characterization of the equality case, is one of the crucial ingredients in the proofs of our main results, since it is the only inequality that we use in our argument to obtain the symmetry result.
\begin{lemma}[\cite{CS}, Lemma 3.2]
\label{lemma-newton}
	Let $B$ and $C$ be symmetric matrices in $\RR^{N\times N}$, and let $B$ be positive semidefinite. Set $A=B\,C$. Then the following inequality holds:
	\begin{equation}\label{newtonIneq}
	S_2{(A)}\le\frac{N-1}{2N}\tr(A)^2\, .
	\end{equation}
	Moreover, if ${\tr} (A)\neq 0$ and equality holds in (\ref{newtonIneq}), then
	\begin{equation*}
	A=\frac{{\tr}(A)}{N}\, I \,,
	\end{equation*}
	and $B$ is, in fact, positive definite.
\end{lemma}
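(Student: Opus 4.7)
The plan is to reduce the problem to a statement about eigenvalues of a \emph{symmetric} matrix, at which point both the inequality and its equality case follow from ordinary Cauchy--Schwarz on $\RR^N$.

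First, using the identity $S_2(A)=\tfrac12\big((\tr A)^2-\tr(A^2)\big)$ recalled in \eqref{defS2}, the inequality \eqref{newtonIneq} is algebraically equivalent to
\begin{equation*}
(\tr A)^2 \le N\,\tr(A^2).
\end{equation*}
So it suffices to prove this last inequality, and to identify when it is an equality.

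Second, I would exploit that $B$ is symmetric and positive semidefinite by setting $B^{1/2}$ equal to the unique symmetric positive semidefinite square root of $B$, and defining
\begin{equation*}
\widetilde A := B^{1/2}\,C\,B^{1/2}.
\end{equation*}
Since $B^{1/2}$ and $C$ are both symmetric, $\widetilde A$ is symmetric. The key observation is that the traces are preserved: by cyclicity of the trace,
\begin{equation*}
\tr(\widetilde A)=\tr(B^{1/2}CB^{1/2})=\tr(BC)=\tr(A),
\end{equation*}
and likewise
\begin{equation*}
\tr(\widetilde A^{\,2})=\tr(B^{1/2}CB\,CB^{1/2})=\tr(BCBC)=\tr(A^2).
\end{equation*}
Hence proving $(\tr A)^2\le N\,\tr(A^2)$ is equivalent to proving $(\tr\widetilde A)^2\le N\,\tr(\widetilde A^{\,2})$.

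Third, since $\widetilde A$ is symmetric it has real eigenvalues $\mu_1,\dots,\mu_N$, and the standard Cauchy--Schwarz inequality on $\RR^N$ applied to the vectors $(\mu_1,\dots,\mu_N)$ and $(1,\dots,1)$ yields
\begin{equation*}
(\tr\widetilde A)^2=\Big(\sum_i \mu_i\Big)^{\!2}\le N\sum_i \mu_i^2=N\,\tr(\widetilde A^{\,2}),
\end{equation*}
proving \eqref{newtonIneq}. Equality forces all $\mu_i$ equal, hence $\widetilde A = \lambda I$ with $\lambda = \tr(A)/N$.

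Finally, to recover the equality statement about $A$ itself: assume $\tr(A)\ne 0$, so $\lambda\ne 0$. If $B$ were only positive semidefinite with a nontrivial kernel, then $B^{1/2}$ would have a nonzero kernel vector $v$, giving $\widetilde A v = 0 = \lambda v$, contradicting $\lambda\ne 0$. Hence $B$, and therefore $B^{1/2}$, is positive definite and invertible. Then from $B^{1/2}C B^{1/2}=\lambda I$ we get $C=\lambda B^{-1}$, whence $A=BC=\lambda I=\tfrac{\tr(A)}{N} I$. The only nontrivial ingredient is the symmetrization trick $A\leadsto \widetilde A$; once one sees it, everything is routine, and I do not expect any real obstacle.
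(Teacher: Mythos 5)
Your proof is correct. The paper does not actually reproduce a proof of this lemma -- it is quoted verbatim from \cite{CS}, Lemma 3.2 -- so there is nothing in the present paper to compare against. Your argument is the standard and natural one: symmetrize $A=BC$ to $\widetilde A=B^{1/2}CB^{1/2}$, which preserves $\tr A$ and $\tr(A^2)$, reduce \eqref{newtonIneq} to $(\tr \widetilde A)^2\le N\,\tr(\widetilde A^2)$, and apply Cauchy--Schwarz to the real eigenvalues of the symmetric matrix $\widetilde A$; the equality analysis then forces $\widetilde A=\lambda I$ with $\lambda\neq 0$, which in turn rules out a kernel of $B$ and gives $A=\lambda I$. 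All the steps -- the cyclicity computations, the uniqueness of the symmetric PSD square root, the passage from $\widetilde A=\lambda I$ back to $A$ via $C=\lambda B^{-1}$ -- are routine and correctly carried out, and this is the same symmetrization trick one expects in the cited source.
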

Notice that we are going to apply \eqref{newtonIneq} to the matrix $W=\nxi^2V(Dv)D^2v$ where $V$ is as in (\ref{V_def}) and $\nxi^2V$ is in fact positive definite since $H\in\hnorm$.


\section{Preliminary results} \label{sect_prelim_II}
We recall that $H$ is a norm, so that $H$ satisfies (i)--(iii) in Subsection \ref{section_norms}, and $H \in \hnorm$, $p>1$, where $\hnorm$ is given by \eqref{Ip_def}.

\subsection{Interior problem}
We start with the following lemma, which describes the expected regularity of the solution to the interior problem \eqref{int-pbu}.

\begin{lemma} \label{lemma_regularity_int}
Let $\Omega\subset\rn$ be a bounded domain with boundary of class $C^{2,\alpha}$ and let $H$ be a norm in $\hnorm$, $p>1$. There exists a unique solution $u$ to Problem (\ref{int-pbu}) with $u\in C^{1,\alpha}(\overline{\Omega})$ and
\begin{equation} \label{Wu_reg}
H^{p-1}(Du)\nabla_{\xi}H(Du) \in W^{1,2}(\Omega) \,.
\end{equation}
Moreover, if $1 < p \leq 2$ then $u \in W^{2,2}(\Omega)$.
\end{lemma}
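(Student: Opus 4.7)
The plan is to prove the four assertions in sequence: existence and uniqueness via variational methods, $C^{1,\alpha}(\overline{\Omega})$ regularity by quasilinear elliptic regularity theory, the $W^{1,2}$ regularity of the stress field $a(Du):=H^{p-1}(Du)\,\nxi H(Du)=\nxi V(Du)$ via Nirenberg incremental quotients, and the $W^{2,2}$ estimate for $1<p\le 2$ as an immediate corollary.

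For \emph{existence and uniqueness}, I would apply the direct method to the functional in \eqref{Pb_min_Serrin}: it is strictly convex on $W_0^{1,p}(\Omega)$ (the strict convexity of $V=H^p/p$ on $\rn\setminus\{0\}$ follows from $H\in\hnorm$), coercive thanks to \eqref{norms equiv} and Poincar\'e's inequality, and weakly lower semicontinuous; the unique minimizer is the weak solution of \eqref{int-pbu_distr}. The \emph{$C^{1,\alpha}(\overline{\Omega})$ regularity} then follows from the classical interior and boundary regularity results of Lieberman and DiBenedetto--Tolksdorf for quasilinear degenerate elliptic equations with natural $p$-growth: the structural hypotheses they require are exactly the two-sided bounds in \eqref{ellitticita}, and the regularity $\partial\Omega\in C^{2,\alpha}$ together with the bounded right-hand side yields the boundary estimate. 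In particular $\|Du\|_{L^\infty(\Omega)}<\infty$.

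The main step is the \emph{$W^{1,2}$-regularity of the stress field} $a(Du)$, which I would establish by the Nirenberg incremental-quotient strategy of \cite{AKM}. Fix a direction $e_k$ and $h>0$ small; since $\tau_{h,k}a(Du)=A_h\,\tau_{h,k}Du$ with
\[
A_h(x):=\int_0^1 \nxi^2 V\bigl(Du(x)+sh\,\tau_{h,k}Du(x)\bigr)\,ds,
\]
the incremental quotient $\tau_{h,k}u$ satisfies $\diver(A_h\,\tau_{h,k}Du)=0$ in the interior. Testing against $\eta^2\tau_{h,k}u$ with a cutoff $\eta$ and invoking the ellipticity in \eqref{ellitticita} together with the $L^\infty$ bound on $Du$, one obtains an $h$-uniform bound on $\int_\Omega\eta^2|\tau_{h,k}a(Du)|^2\,dx$, hence $a(Du)\in W^{1,2}_{\mathrm{loc}}(\Omega)$. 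For regularity up to $\partial\Omega$ I would flatten $\partial\Omega$ by local $C^{2,\alpha}$ diffeomorphisms, perform tangential difference quotients only (so that the homogeneous Dirichlet condition $u=0$ is preserved), and recover the missing normal component of $\nabla a(Du)$ directly from the equation $-\diver a(Du)=1$.

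For \emph{$1<p\le 2$}, the conclusion $u\in W^{2,2}(\Omega)$ is then immediate: by \eqref{ellitticita}, $\nxi^2 V(\xi)\ge c|\xi|^{p-2}\,\mathrm{Id}$, and since $p-2\le 0$ together with $\|Du\|_\infty<\infty$ this gives a uniform positive lower bound on $\nxi^2 V(Du)$ wherever $Du\ne 0$; since $\partial_i a(Du)=\nxi^2 V(Du)\,\partial_i Du$ and $D^2u=0$ almost everywhere on $\{Du=0\}$, the $L^2$ bound on $\nabla a(Du)$ transfers to $|D^2 u|$. The \emph{main obstacle} will be the passage from interior to global $W^{1,2}$-regularity of the stress field: the anisotropic boundary flattening has to preserve the degenerate $p$-ellipticity structure, and the normal component of $\nabla a(Du)$ must be extracted from the equation itself rather than by tangential differencing.
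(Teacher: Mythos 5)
Your proposal is correct in outline and your interior argument (Nirenberg difference quotients, which is what \cite{AKM} uses) matches the paper's strategy, but you diverge at the boundary in a way that creates needless work. The paper disposes of the boundary regularity much more economically: since $\partial\Omega\in C^{2,\alpha}$, a standard barrier argument (comparison with explicit Wulff-type sub/supersolutions) gives $c\le|Du|\le 1/c$ on $\partial\Omega$; combined with $u\in C^{1,\alpha}(\overline\Omega)$ this yields $c/2<|Du|<2/c$ in a full neighbourhood $\widetilde\Omega$ of $\partial\Omega$, so in $\widetilde\Omega$ the operator $\Delta_p^H$ is \emph{uniformly} elliptic with $C^\alpha$ coefficients, and classical Schauder theory gives $u\in C^{2,\alpha}(\widetilde\Omega)$. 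The global assertions \eqref{Wu_reg} and $u\in W^{2,2}(\Omega)$ for $1<p\le 2$ are then trivial near $\partial\Omega$, and only the \emph{interior} part of the argument (which you describe correctly, citing \cite{AKM} for the stress field and Acerbi--Fusco for $W^{2,2}_{\rm loc}$) is needed. Your proposed boundary-flattening with tangential difference quotients and recovery of the normal component of $\nabla a(Du)$ from the equation would work, but it confronts a ``main obstacle'' that in fact does not exist once one notices the non-degeneracy of $Du$ at the boundary; the paper's barrier step sidesteps that machinery entirely. One further small point: in your last paragraph, invoking ``$D^2u=0$ a.e.\ on $\{Du=0\}$'' tacitly presupposes $D^2u$ exists, which is what you are proving; the cleaner formulation is to observe that the $h$-uniform Caccioppoli estimate already yields $\int\eta^2|Du|^{p-2}|\tau_{h,k}Du|^2\,dx\le C$, and for $1<p\le 2$ the weight $|Du|^{p-2}\ge\|Du\|_\infty^{p-2}>0$ bounds $\int\eta^2|\tau_{h,k}Du|^2\,dx$ directly, giving $u\in W^{2,2}_{\rm loc}(\Omega)$ without any a.e.\ second-derivative identity.
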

\begin{proof}
The statement is a collection of well-known results in regularity theory and we give only a sketch of the proof.  Since $H \in \mathcal{I}_p$ then \eqref{ellitticita} holds and the global $C^{1,\alpha}$ regularity of the solution follows from  \cite[Theorem 1]{Lieberman} (see also \cite{DiB} and \cite{To}).
\par
In order to prove \eqref{Wu_reg}, we first notice that from \cite[Theorem 4.1]{AKM} we have that 
\begin{equation} \label{Wreg_loc_II} 
H^{p-1}(Du)\nabla_{\xi}H(Du) \in W^{1,2}_{\rm loc}(\Omega) \,.
\end{equation}
We notice that \eqref{Wreg_loc_II} is obtained in \cite{AKM} for solutions to homogeneous equations, but the argument used in the proof can be easily adapted to the case of constant right hand side and we omit the proof.

Since $\partial \Omega$ is of class $C^{2,\alpha}$, then a standard barrier argument shows that there exists $0<c<1$ such that $c \leq |Du| \leq 1/c$ on $\partial \Omega$. Being $u \in C^{1,\alpha}(\Omega)$, this entails that
$c/2 < |Du|< 2/c$ in a neighborhood $\widetilde \Omega$ of $\partial \Omega$, which implies that $u$ solves a uniformly elliptic equation in $\widetilde \Omega$. From classical regularity theory we obtain that $u \in C^{2,\alpha}$ in $\widetilde \Omega$ which, together with \eqref{Wreg_loc_II}, implies \eqref{Wu_reg}.
\par
Finally, if $1 < p \leq 2$ then $u  \in W_{\rm loc}^{2,2}(\Omega)$ (which can be obtained by generalizing \cite[Proposition 2.7]{AF} to an equation with constant right hand side) and the global $W^{2,2}$-regularity follows again  by exploiting the $C^{2,\alpha}$ regularity of $u$ in a suitable neighborhood of $\partial \Omega$.
\end{proof}	

Now we prove the following Poho\v{z}aev identity which will be used to achieve the Wulff shape characterization in Theorem \ref{thm_serrin_int}.

\begin{lemma}[Anisotropic Poho\v{z}aev identity]\label{lemma-pohoz}	
Let $u$ be the solution to (\ref{int-pbu}). We have that
\begin{equation} \label{pohoz}
\Big(N(p-1) + p\Big) \int_{\Omega}u \,dx = -(p-1) \int_{\partial \Omega} H^p(Du)\, \langle x;\nu\rangle\, d\sigma \,.
\end{equation}
\end{lemma}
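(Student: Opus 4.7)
\medskip

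\noindent\textbf{Proof proposal.} The plan is to derive \eqref{pohoz} by the classical Poho\v{z}aev multiplier argument adapted to the anisotropic $p$-Laplacian, combined with the energy identity obtained by testing with the solution itself.

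As a preparation, I would record two simple identities. Testing the weak formulation \eqref{int-pbu_distr} against $\phi=u\in W_0^{1,p}(\Omega)$ and applying Euler's identity $\langle V_\xi(\xi);\xi\rangle=pV(\xi)=H^p(\xi)$ for the $p$-homogeneous function $V=H^p/p$ gives the \emph{energy identity}
$$
\int_\Omega u\,dx\;=\;\int_\Omega H^p(Du)\,dx.
$$
Since $u=0$ on $\partial\Omega$, an integration by parts that needs only $u\in W^{1,1}(\Omega)$ yields the \emph{dilation identity}
$$
\int_\Omega\langle x;Du\rangle\,dx\;=\;-N\int_\Omega u\,dx.
$$

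For the main step I would introduce the vector field
$$
F_j:=V(Du)\,x_j\;-\;\langle x;Du\rangle\,V_{\xi_j}(Du),\qquad j=1,\dots,N,
$$
whose boundary flux will reproduce $H^p(Du)\langle x;\nu\rangle$. Expanding $\operatorname{div} F$ by a direct computation, the two resulting terms involving $D^2u$ cancel each other thanks to the symmetry $u_{ij}=u_{ji}$ combined with the chain-rule identity $\partial_i V(Du)=V_{\xi_j}(Du)u_{ji}$; combining this with Euler's identity and with the equation $\partial_j V_{\xi_j}(Du)=\Delta_p^Hu=-1$ produces the second-derivative-free formula
$$
\operatorname{div} F\;=\;(N-p)\,V(Du)\;+\;\langle x;Du\rangle.
$$
On $\partial\Omega$ one has $Du=u_\nu\nu$ with $u_\nu>0$ (since $u>0$ inside and vanishes on the boundary). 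Homogeneity of $V$ and $V_\xi$, together with Euler's identity $V_{\xi_j}(\nu)\nu_j=pV(\nu)$, then give the boundary expression
$$
F\cdot\nu\;=\;-\tfrac{p-1}{p}\,H^p(Du)\,\langle x;\nu\rangle\qquad\text{on }\partial\Omega.
$$
Applying the divergence theorem to $F$ over $\Omega$ (with $\nu$ inward, as in the paper), inserting the two preliminary identities into the interior integral, and rearranging delivers exactly \eqref{pohoz}.

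The delicate point, as usual in Poho\v{z}aev computations for degenerate operators, is justifying the divergence theorem at the regularity available, since the computation of $\operatorname{div} F$ formally involves $D^2u$ before cancellation. For $1<p\le 2$ this is straightforward because $u\in W^{2,2}(\Omega)$ by Lemma \ref{lemma_regularity_int}. For $p>2$ I would proceed by approximation: by the boundary $C^{2,\alpha}$ estimate recalled in the proof of Lemma \ref{lemma_regularity_int} one has $|Du|\ge c>0$ in a full neighbourhood of $\partial\Omega$, so the open set $\{|Du|>\varepsilon\}$ contains such a neighbourhood for $\varepsilon$ small and $u\in C^{2,\alpha}$ on it by non-degenerate elliptic regularity; the divergence theorem applies there, and one passes to the limit $\varepsilon\to 0$ using $|F|\lesssim|Du|^p$ to kill the internal piece of the boundary, together with the fact that the critical set $\{Du=0\}$ has zero Lebesgue measure.
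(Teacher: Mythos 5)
Your computation of $\operatorname{div} F$ and of the boundary flux $F\cdot\nu$ is correct, and after combining with the energy and dilation identities it does deliver \eqref{pohoz}; in fact this is exactly the content of the paper's proof, just packaged as a single divergence-form identity rather than two successive integrations by parts. Where you genuinely differ from the paper is the regularization strategy. The paper regularizes the \emph{equation}: it replaces $V=H^p/p$ by $V^\ep=\psi_\ep\circ H$ with $\psi_\ep(t)=\psi(\sqrt{\ep^2+t^2})-\psi(\ep)$, obtains a uniformly elliptic problem whose solution $u^\ep$ is in $W^{2,2}(\Omega)\cap C^{1,\alpha}(\overline\Omega)$ and converges to $u$ in $C^1(\overline\Omega)$, and then does the Poho\v{z}aev computation on $u^\ep$ before passing to the limit. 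You instead regularize the \emph{domain} by excising a neighbourhood of the critical set. Both ideas are legitimate, but the paper's route avoids the geometry of the level sets of $|Du|$ entirely.

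The place where your write-up has a genuine gap is the limit $\ep\to 0$ for $p>2$. You argue that the internal boundary contribution vanishes because $|F|\lesssim|Du|^p\le\ep^p$ on $\{|Du|=\ep\}$ and because $\{Du=0\}$ has zero Lebesgue measure. Zero Lebesgue measure of the critical set is the wrong currency here: what you actually need is control of the surface measure $\mathcal{H}^{N-1}\big(\{|Du|=\ep\}\cap\Omega\big)$, which could in principle blow up as $\ep\to0$ and overwhelm the factor $\ep^p$. This can be repaired, but not for free: the available Sobolev information at this level of generality is the $W^{1,2}(\Omega)$ regularity of the stress field $a=V_\xi(Du)=H^{p-1}(Du)\nabla_\xi H(Du)$ from Lemma~\ref{lemma_regularity_int}, not of $|Du|$ itself. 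Working with level sets of $|a|$ rather than of $|Du|$ and applying the coarea formula to $a$, one obtains a sequence $s_k\to0$ along which $s_k\,\mathcal{H}^{N-1}(\{|a|=s_k\})\to0$; since $|Du|\sim|a|^{1/(p-1)}$ and hence $|F|\lesssim|a|^{p/(p-1)}=s_k\cdot s_k^{1/(p-1)}$, the internal flux along this sequence does vanish. You should make this explicit, because as stated the passage to the limit is not justified, whereas the paper's $\ep$-regularization of the PDE circumvents the issue completely.
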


\begin{proof}
As usual, the proof of the Poho\v{z}aev identity is obtained by integrating a differential identity involving the second derivatives of $u$. However, due to the lack of enough regularity of $u$, we shall argue by approximation. 

\emph{Step 1: set up of the approximation argument}. For $ t \geq 0$ and $\varepsilon \in (0,1)$, we set 
\begin{equation*} 
\label{psi_ep}
\psi(t)=\frac{t^p}{p} \qquad \mbox{ and } \qquad \psi_\ep(t) = \psi\left(\sqrt{\ep^2 + t^2}\right) - \psi(\ep) \,.
\end{equation*}
We define $\Psi(t)=\psi'(t)\,t$ and $\Psi_\ep(t)=\psi'_\ep(t)\,t$. From a standard argument (see for instance \cite[Lemma 4.2]{CFV2}) we have that

\begin{equation} \label{psi_ep_conv}
\psi_\ep \to \psi \qquad \mbox{and} \qquad \Psi_\ep \to \Psi \quad  \text{ uniformly on compact sets of } [0,+\infty) \,. 
\end{equation}
We notice that $V(\xi) = \psi(H(\xi))$, $\xi \in \rn$, and we define $V^\ep: \rn \to \RR$ as 
\begin{equation*} \label{V_ep_def}
V^\ep:= \psi_\ep \circ H \,.
\end{equation*}
Let $u^\ep$ be the solution to the problem
\begin{equation*} \label{u_ep_pbu}
\begin{cases}
\diver\left(  \psi_\ep'(H(Du^\ep)) \nabla_\xi H(Du^\ep) \right)= -1 & \textmd{in } \Omega \,, \\
u^\ep=0 & \textmd{on } \partial \Omega \,,
\end{cases}
\end{equation*}
or equivalently
\begin{equation*} \label{u_ep_pbu_I}
\begin{cases}
\diver \left(  \nabla_\xi V^\ep(Du^\ep(x)) \right)= -1 & \textmd{in } \Omega \,, \\
u^\ep=0 & \textmd{on } \partial \Omega \,.
\end{cases}
\end{equation*}
We notice that for $\ep>0$ the above problems are uniformly elliptic; in particular they satisfy
$$
\lambda\,\left(\varepsilon+|z|^2\right)^\frac{p-2}{2}\,|\xi|^2\le \langle \nabla^2 V^\varepsilon(z)\,\xi,\xi\rangle\le \Lambda\,\left(\varepsilon+|z|^2\right)^\frac{p-2}{2}\,|\xi|^2,\qquad \mbox{ for } z,\xi\in\mathbb{R}^N,
$$
for some $0<\lambda<\Lambda$ which depend only on $p$ and on the ellipticity constants of $H$. In particular, they are independent of $\varepsilon$.

Standard regularity results give that $u^\ep \in C^{1,\alpha}(\overline\Omega) \cap W^{2,2}(\Omega)$, $u^\ep$ is a strong solution and 
\begin{equation} \label{u_ep_to_u_C1}
u^\ep \to u \quad \text{  in } C^1(\overline \Omega),
\end{equation} 
as $\ep$ goes to $0$ (see for instance \cite[Proposition 4.3]{CFV2}).

\emph{Step 2: proof of \eqref{pohoz}}. Now we are ready to prove the Poho\v{z}aev identity. We notice that $\int_{\Omega}\diver(x\,u^\ep)\,dx=0$ since $u^\ep=0$ on $\bd\Omega$ and hence
$$
N\int_{\Omega}u^\ep\, dx  =\int_{\Omega}(-\diver(x\,u^\ep)+N\,u^\ep)\,dx \,,	
$$
which implies
\begin{equation}\label{phoz3}
	N\int_{\Omega}u^\ep dx =-\int_{\Omega}\langle x;Du^\ep \rangle dx \,.
\end{equation}
Since $-1=\diver(\nabla_\xi V^\ep(Du^\ep))$ in $\Omega$, we have (recall that $u^\ep \in W^{2,2}(\Omega)$)
\begin{equation*} 
\begin{aligned}
-\int_{\Omega}& \langle x;Du^\ep\rangle  \;dx  =\int_{\Omega}\langle x;Du^\ep\rangle\diver(\nabla_\xi V^\ep(Du^\ep)) \;dx \\
& =\int_{\Omega}\left[ \diver(\langle x;Du^\ep \rangle \nabla_\xi V^\ep(Du^\ep))-\langle D(\langle x;Du^\ep \rangle); \nabla_\xi V^\ep(Du^\ep) \rangle \right] dx  \\
&=-\int_{\partial \Omega}\langle x;Du^\ep \rangle \langle \nabla_\xi V^\ep(Du^\ep); \nu \rangle - \int_\Omega \langle Du^\ep ; \nabla_\xi V^\ep(Du^\ep) \rangle dx - \int_\Omega \langle x D^2u^\ep; \nabla_\xi V^\ep(Du^\ep) \rangle dx \,;
\end{aligned}
\end{equation*}
from the definition of $V^\ep$, \eqref{Hhomog1} and being $\nu=D u^\ep/|Du^\ep|$, we obtain 
\begin{multline}   \label{phoz4}
-\int_{\Omega} \langle x;Du^\ep\rangle  \;dx  = -\int_{\partial \Omega}\psi_\ep'(H(Du^\ep)) H(Du^\ep) \langle x;\nu \rangle - \int_\Omega \psi_\ep'(H(Du^\ep))  H(Du^\ep)  dx \\ - \int_\Omega \langle x D^2u^\ep; \nabla_\xi V^\ep(Du^\ep) \rangle dx \,.
\end{multline}
Notice that the last term on the right hand side in \eqref{phoz4} can be written as 
\begin{equation*}
\begin{aligned}
- \int_\Omega \langle x D^2u^\ep; \nabla_\xi V^\ep(Du^\ep) \rangle dx & = - \int_\Omega \diver (x V^\ep(Du^\ep)) dx + N \int_\Omega V^\ep(Du^\ep) dx \\
& =  \int_{\partial \Omega} V^\ep(Du^\ep) \langle x; \nu \rangle dx + N \int_\Omega V^\ep(Du^\ep) dx \,,
\end{aligned}
\end{equation*}
and from \eqref{phoz3} and \eqref{phoz4} we have 
\begin{multline*}
N\int_{\Omega}u^\ep dx = -\int_{\partial \Omega}\psi_\ep'(H(Du^\ep)) H(Du^\ep) \langle x;\nu \rangle - \int_\Omega \psi_\ep'(H(Du^\ep))  H(Du^\ep)  dx \\  + \int_{\partial \Omega} V^\ep(Du^\ep) \langle x; \nu \rangle dx + N \int_\Omega V^\ep(Du^\ep) dx  \,.
\end{multline*} 
Now we use \eqref{u_ep_to_u_C1} and \eqref{psi_ep_conv} to pass to the limit as $\ep \to 0$ and we find 
\begin{equation} \label{poho_quasi}
N\int_{\Omega}u dx = \left(-1+ \frac1p \right) \int_{\partial \Omega} H^p(Du) \langle x;\nu  \rangle    + \left( \frac Np - 1\right) \int_\Omega H^p(Du) dx \,.
\end{equation} 
Finally, by using $u$ as a test function in \eqref{int-pbu_distr} we have that 
$$
\int_\Omega u = \int_\Omega H(Du)^p \,,
$$		
and from \eqref{poho_quasi} we obtain \eqref{pohoz}.
\end{proof}

\subsection{Exterior problem}
In this subsection we give some preliminary result related to the solution to \eqref{pcap}.

\begin{theorem}\label{teostime}
Let $\Omega$ be a bounded convex domain of $\rn$ whose boundary is of class $C^{2,\alpha}$ and assume $O\in\Omega$.
Let $H$ be a norm of $\rn$ in the class $\hnorm$, $p>1$.

There exists a unique solution $u$ to Problem \eqref{pcap}, $u\in C^{2,\alpha}(\rn\setminus {\Omega})$, and $0<u \leq 1$. Moreover $u$ satisfies \eqref{ext-pbu} and the following estimates hold:
\begin{itemize}
\item[(i)] there exist $A_1,A_2$ positive constants such that
$$
A_1 H_0^{\frac{p-N}{p-1}}(x)\le u(x)\le A_2 H_0^{\frac{p-N}{p-1}}(x),
$$
for $x\in\rn\setminus \Omega$;
\item[(ii)] there exist $B_1,B_2$ positive constants such that 
$$
B_1 H_0^{\frac{p-N}{p-1}-1}(x)\le H(Du(x))\le B_2 H_0^{\frac{p-N}{p-1}-1}(x),
$$
for $x$ sufficiently far away from $\Omega$;
\item[(iii)] there exists a positive constant $B_3$ such that
$$
|D^2u(x)|\le {B_3}{H_0^{\frac{p-N}{p-1}-2}}(x),
$$
for $x$ sufficiently far away from $\Omega$.
\end{itemize}
The constants $A_1,A_2,B_1,B_2,B_3$ depend only on $\Omega$, $p$ and $N$.
\end{theorem}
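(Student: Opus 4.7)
The plan is, first, to establish existence and uniqueness of a minimizer of \eqref{pcap} in the natural homogeneous Sobolev space: strict convexity of the integrand in $\xi$ (since $H \in \hnorm$, $p>1$) and coercivity via Sobolev/Hardy embeddings at infinity (which hold because $1<p<N$) give a unique minimizer $u$. The Euler--Lagrange equation of this minimizer is precisely \eqref{ext-pbu}, and the interior $C^{1,\alpha}$ regularity follows from the Tolksdorf--DiBenedetto--Lieberman theory for quasilinear operators of $p$-Laplace type. Lemma \ref{lemma_comp_princ}, applied on large bounded annuli $B_R \setminus \overline{\Omega}$ and combined with the decay at infinity (absorbing the outer boundary error by an arbitrarily small multiple of a positive supersolution), forces $0 < u \leq 1$ in $\rn \setminus \overline{\Omega}$.

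For (i), I would compare $u$ with the explicit Wulff-shape solutions $\Phi_i(x) := (H_0(x)/r_i)^{(p-N)/(p-1)}$, choosing $0<r_1<r_2$ with $\bhor{r_1} \subset \Omega \subset \bhor{r_2}$. Each $\Phi_i$ is a classical solution of $\Delta_p^H \Phi_i = 0$ in $\rn \setminus \{0\}$ (verified by direct computation using the homogeneity of $H_0$ and the duality \eqref{HnablaH inverse}), equals $1$ on $\partial \bhor{r_i}$, and decays to $0$ at infinity. On $\partial \Omega$ one has $\Phi_1 \leq 1 = u \leq \Phi_2$, and the comparison principle on $B_R \setminus \Omega$, letting $R \to \infty$, yields $\Phi_1 \leq u \leq \Phi_2$ in $\rn \setminus \Omega$, which is exactly (i).

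For (ii) and (iii) the strategy is a rescaling argument. Pick $x_0$ with $R := H_0(x_0)/2$ large enough that $\bhorx{R}{x_0} \subset \rn \setminus \overline{\Omega}$, and set $v(y) := u(x_0 + Ry)/u(x_0)$ on $\bho$. By translation invariance and $p$-homogeneity of $\Delta_p^H$, $v$ still solves the homogeneous equation on $\bho$, and from (i) together with the triangle inequality for $H_0$, $v$ is pinched between two positive constants depending only on $N,p,\Omega$ on a smaller concentric Wulff ball. Interior $C^{1,\alpha}$ estimates for the anisotropic $p$-Laplacian yield $H(Dv(0)) \leq C$; a Harnack/Hopf comparison of $v$ from below with a rescaled Wulff-shape barrier gives $H(Dv(0)) \geq c > 0$; unscaling, using $u(x_0)\asymp H_0(x_0)^{(p-N)/(p-1)}$ from (i), produces (ii). Once the gradient is bounded away from zero, the equation for $v$ becomes uniformly elliptic with $C^{0,\alpha}$ coefficients, so standard Schauder theory bounds $|D^2 v(0)|$ and rescaling yields (iii). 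For the global $C^{2,\alpha}$ regularity up to $\partial \Omega$, the convexity and $C^{2,\alpha}$ regularity of $\partial \Omega$ allow internally tangent Wulff-shape barriers, producing $|Du| \geq c > 0$ in a one-sided neighborhood of $\partial \Omega$; boundary Schauder estimates for uniformly elliptic equations then close the argument.

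The main obstacle I anticipate is the uniform lower bound on $H(Du)$ at infinity, the left inequality of (ii). The upper bound follows routinely from rescaling and interior gradient estimates, whereas the lower bound requires excluding an asymptotic degeneracy of $u$. The key observation is that $u$ is pinched between two positive multiples of the explicit radial barrier $H_0^{(p-N)/(p-1)}$, whose own gradient is of size $H_0^{(p-N)/(p-1)-1}$; a Harnack/Hopf comparison on the rescaled $v$ then transfers this lower bound from the barrier to $u$ itself. This is presumably the content of Appendix \ref{appendix_lower_bound}.
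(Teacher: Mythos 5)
Your proposal matches the paper's overall structure for existence/uniqueness, for part (i) (barrier comparison with explicit Wulff-shape solutions, effectively what the paper does via the relative capacitary potentials $u_{R,R_0}$ and $u_{R,R_1}$), and for the upper bounds in (ii) and (iii) (rescaling $U(y)=\rho^{-(p-N)/(p-1)}u(\rho y)$ and applying DiBenedetto interior gradient bounds, then Schauder once uniform ellipticity is in hand). These pieces are sound.

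The genuine gap is the lower bound in (ii). You write that ``a Harnack/Hopf comparison of $v$ from below with a rescaled Wulff-shape barrier gives $H(Dv(0))\geq c>0$'' and later that the pinching $\Phi_1\leq u\leq\Phi_2$ ``transfers'' the barrier's gradient size to $u$. That mechanism does not work: a two-sided sup/inf bound on a positive solution, together with Harnack, gives no pointwise lower bound on $|Du|$ at an interior point where no boundary data is prescribed (and an interior Hopf-type argument requires an interior touching, which you have no way to arrange at $y=0$). In fact $Du$ of a positive $p$-harmonic function can vanish at interior points; excluding this requires structure specific to capacitary potentials. The paper's Appendix B (Lemma \ref{lemmaLewis}, adapted from Lewis) uses a fundamentally different idea: one compares $u_R(\lambda x)$ with $u_R(x)$ on the boundary components via explicit Wulff barriers and Harnack, applies the comparison principle on the shrunken annulus $E_\lambda$, and differentiates in $\lambda$ at $\lambda=1$ to obtain a positive lower bound on $-Du(x)\cdot x$, hence on $H(Du)$, uniformly in $R$. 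This scaling trick is what produces a genuine gradient lower bound; Harnack alone cannot. The same lemma is also what gives $H(Du)>0$ throughout $\rn\setminus\Omega$, and hence the claimed global $C^{2,\alpha}$ regularity, which your sketch only addresses near $\partial\Omega$ (boundary barriers) and near infinity (rescaling), leaving an intermediate compact annulus where nondegeneracy is not established.
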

\begin{proof}
Let $R>0$ be such that $\bhor{R}\supset\overline{\Omega}$ and let  $u_R$ be the minimizer of
\begin{equation} \label{cap_rel}
\pCap(\Omega, B_{H_0}(R))=\inf \left\{  \frac 1p\int_{\bhor{R}} H^p(D\varphi)\, dx\, :\, \ \varphi\in C^{\infty}_0(\bhor{R}), \varphi(x)\ge 1 \text{ for }x\in\Omega \right\} \,.
\end{equation}
Since $H\in\hnorm$, a standard argument yields that there exists a unique minimizer $u_R$ and it solves the Euler-Lagrange equation
\begin{equation}\label{capBR}
\begin{cases}
\Delta^H_p u_R=0 &\qquad\text{ in }\bhor{R}\setminus\overline{\Omega} \,,\\
u_R=1 &\qquad\text{ on }\partial\Omega \,, \\
u_R=0 &\qquad\text{ on } \partial\bhor{R} \,.
\end{cases}
\end{equation}
Thanks to the comparison principle in Lemma \ref{lemma_comp_princ}, if $r>s$ then $u_{r}(x)\ge u_{s}(x)$ for every $x\in\bhor{s}\setminus\overline{\Omega}$ and hence the function 
$$
u(x)=\lim_{R\to\infty}u_R(x)
$$
is well defined, for $x\in\rn\setminus\overline{\Omega}$
and the sequence $u_R$ is in fact uniformly convergent. We are going to prove (i) and the lower bound in (ii) for $u_R$ and show that the involved constants do not depend on $R$, so that we obtain the desired estimates for $u$ by passing to the limit as $R\to\infty$. The upper bounds in (ii) and (iii) will be obtained by arguing directly on $u$.

Let $0<R_0<R_1$ be such that
$$
R_0=\sup\{r>0\ :\ \bhor{r}\subset\Omega\}; \qquad R_1=\inf\{r>0\ :\ \Omega\subset\bhor{r}\},
$$
and let $u_{R_0,R}$, $u_{R_1,R}$ be solutions to (\ref{capBR}) for $\Omega=\bhor{R_0}$ and $\Omega=\bhor{R_1}$, respectively.
By comparison principle it holds
$$
u_R\ge u_{R,R_0}= \frac{H_0^\pesp(x)-R^\pesp}{R_0^\pesp-R^\pesp},
$$
for every $x\in\bhor{R}\setminus\Omega$ and
$$
u_R\le u_{R,R_1}= \frac{H_0^\pesp(x)-R^\pesp}{R_1^\pesp-R^\pesp},
$$
for every $x\in\bhor{R}\setminus{\bhor{R_1}}$.
Notice that in fact the latter inequality holds true in $\bhor{R}\setminus{\Omega}$ by direct comparison between $u_R$ and $u_{R,R_1}$.
Hence (i) holds for $u$ for every $x\in\rn\setminus\overline{\Omega}$ by passing to the limit as $R\to\infty$.

Since $H\in\hnorm$ and $\Delta^H_p$ satisfies (\ref{ellitticita}), classical regularity results for degenerate elliptic equations in divergence form with potential growth (see \cite{DiB}, \cite{Lieberman} and \cite{To})  guarantee that $u_R\in C^{1,\alpha}(\bhor{R}\setminus{\Omega})$, where $\alpha$ does not depend on $R$.

Moreover, by Lemma \ref{lemmaLewis} it holds $H(Du_R)\neq 0$ and hence Theorem 6.19 in \cite{GT} entails that $u_R\in C^{2,\alpha}(\overline{\bhor{R}\setminus{\Omega}})$.
More precisely Lemma \ref{lemmaLewis} gives the lower bound on $u$ in (ii).

Now we prove the upper bound on $u$ in (ii) and (iii). Let $\rho>4R_1$ be fixed. For $y \in E:= \overline{\bhor{4}} \setminus \bhor{1/4}$ we define 
\begin{equation} \label{Ugrande}
U(y)=\rho^{-\frac{p-N}{p-1}} u(\rho y) \,,
\end{equation}
and notice that
\begin{equation*} \label{Ueq}
\Delta^H_p U =0 \qquad \text{in }E \,.
\end{equation*}
Moreover, from (i) we have that $|U(y)| \leq A$ for $y\in E$ and for some constant $A$ which depends only on $n,p$ and $\Omega$. From \cite[Theorem 1]{DiB}, there exists a constant $K$ depending only on $\Omega$, $n$ and $p$ such that $|DU(y)| \leq K$ for $y\in \overline{\bhor{2}} \setminus \bhor{1/2}$. Being $DU(y) = \rho^{-\frac{p-N}{p-1}+1} Du(\rho y)$ and from \eqref{norms equiv} we obtain the upper bound in (ii).

As noticed in \eqref{Ugrande}, $U$ satisfies an elliptic equation of the form $a_{ij} U_{ij} = 0$ in $E$, where the coefficients $a_{ij}$ are given by
$$
a_{ij}(y) = H^{p-2}(DU(y))\Big( (p-1)H_{\xi_i}(DU(y))H_{\xi_j}(DU(y))+H(DU(y))H_{\xi_i\xi_j}(DU(y))  \Big) \,.
$$
From (ii) and \eqref{norms equiv} we have that there exists $\gamma$ depending only on $N$, $p$ and $\Omega$ such that 
$$
\gamma^{-1}  |\xi|^2 \leq a_{ij}(y)\xi_i\xi_j\le \gamma |\xi|^2
$$ 
for every $y\in E$ and $\xi\in\rn$.
Notice that 
interior Schauder's estimates (see Theorem 6.2 in \cite{GT}) apply to $U(y)$.
This entails $|D^2U(y)|\le B$ for some positive constant $B$, that is
$$
|D^2u(\rho y)|\le B \rho^{\frac{p-N}{p-1}-2},
$$
for $y\in \bhor{2}\setminus\overline{\bhor{1/2}}$ and (iii) follows.
\end{proof}


\section{Step 1 - Integral identities for $S_2$} \label{section4}
In this section we achieve Step 1, that is we derive two crucial integral identities involving $S_2$ and the solutions to problems \eqref{int-pbu} and \eqref{ext-pbu}. More precisely, we use the pointwise identity in Lemma \ref{lemmaS2W=} below to obtain  an integral identity for $u$ in Lemma \ref{lemma_ident_int} (for the interior problem) and for $u^{\frac{p}{p-N}}$ in Lemma \ref{lemma_ident_ext} (for the exterior case).

In the following lemma we assume that all the involved functions are smooth enough. In particular, we stress that $V$ is not necessarily related to a norm and hence is not in principle homogeneous.

\begin{lemma}\label{lemmaS2W=}
	Let $v$ be a positive function of class $C^3$ and let $V:\rn\to \RR^+$ be of class $C^3(\rn)$ and such that $V (Dv) \diver \left(\nxi V(Dv) \right)$ can be continuously extended to zero at $Dv=0$. For any $\gamma \in \RR$ we have that
\begin{equation}\label{2vgammas2}
	2v^{\gamma}S^2(W)= \diver(v^\gamma\sdueij{W}V_{\xi_i}(Dv))-\gamma v^{\gamma-1}\sdueij{W}V_{\xi_i}(Dv)v_j \,,
\end{equation}	
with $W=\nabla^2_\xi V(Dv)D^2v$.

Moreover, if $H$ is a norm and $V=H^p/p$, $p>1$, then
	\begin{eqnarray}\label{S2W=equazione-puntuale-norma}
	2v^{\gamma}S^2(W)&=& \diver(v^\gamma\sdueij{W}V_{\xi_i}(Dv)+ \gamma(p-1)v^{\gamma-1}V(Dv)\nabla_\xi V(Dv))\\
	&& - \gamma(\gamma-1)p({p-1}) v^{\gamma-2}V^{2}(Dv)-\gamma ({2p-1})v^{\gamma-1}V(Dv)\Delta^H_p v \,. \nonumber
	\end{eqnarray}
\end{lemma}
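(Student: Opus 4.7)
The plan is to establish (\ref{2vgammas2}) by a direct application of the product rule, resting on two structural facts about the matrix $W = \nxi^2 V(Dv)\,D^2 v$ and its companion tensor $\sdueij{W}$: the divergence-free identity $\partial_j \sdueij{W} = 0$ and the pointwise formula $\partial_j V_{\xi_i}(Dv) = V_{\xi_i\xi_k}(Dv)\,v_{kj} = W_{ij}$. Expanding
$$\partial_j\bigl(v^\gamma \sdueij{W} V_{\xi_i}(Dv)\bigr) = \gamma v^{\gamma-1} v_j \sdueij{W} V_{\xi_i}(Dv) + v^\gamma [\partial_j \sdueij{W}]\, V_{\xi_i}(Dv) + v^\gamma \sdueij{W}\, \partial_j V_{\xi_i}(Dv),$$
the middle term vanishes and the last one equals $v^\gamma \sdueij{W} W_{ij} = 2 v^\gamma S_2(W)$ by (\ref{defS2}); rearranging is exactly (\ref{2vgammas2}). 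The verification $\partial_j \sdueij{W} = 0$ is the step I expect to be the main technical obstacle: starting from $\sdueij{W} = -V_{\xi_j\xi_k}(Dv) v_{ki} + \delta_{ij}\,\Delta^H_p v$ and differentiating, four terms appear (two from each summand), and they cancel pairwise once Schwarz's theorem is applied both to $V_{\xi_j\xi_k\xi_l}$ and to $v_{ijk}$; this is where the $C^3$ regularity is used, and the behavior at $Dv = 0$ is handled via the continuous extension hypothesis on $V(Dv)\diver(\nxi V(Dv))$.

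For the specialization (\ref{S2W=equazione-puntuale-norma}) with $V = H^p/p$, the task reduces to rewriting the scalar $\sdueij{W} V_{\xi_i}(Dv) v_j$ that already appears on the right-hand side of (\ref{2vgammas2}). Using $\sdueij{W} = -W_{ji} + \delta_{ij}\,\tr W$ together with the Euler-type identities $\langle Dv;\nxi V(Dv)\rangle = p\, V(Dv)$ and $V_{\xi_j\xi_k}(Dv)\, v_j = (p-1)\, V_{\xi_k}(Dv)$, both forced by the $p$-homogeneity of $V$, a short calculation gives
$$\sdueij{W}\, V_{\xi_i}(Dv)\, v_j = p\, V(Dv)\,\Delta^H_p v - (p-1)\, V_{\xi_k}(Dv)\, V_{\xi_i}(Dv)\, v_{ki}.$$
The quadratic form on the right is then cast in divergence form by recognizing $V_{\xi_k}(Dv)\, V_{\xi_i}(Dv)\, v_{ki} = V_{\xi_k}(Dv)\,\partial_k(V(Dv)) = \partial_k\bigl(V(Dv)\, V_{\xi_k}(Dv)\bigr) - V(Dv)\,\Delta^H_p v$, where $\Delta^H_p v = \partial_k V_{\xi_k}(Dv)$ is used in the last step.

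Assembling these pieces, the term $-\gamma v^{\gamma-1} \sdueij{W} V_{\xi_i}(Dv) v_j$ in (\ref{2vgammas2}) splits into $-(2p-1)\gamma v^{\gamma-1} V(Dv)\,\Delta^H_p v$ plus $\gamma(p-1) v^{\gamma-1} \partial_k(V(Dv) V_{\xi_k}(Dv))$. The last divergence is absorbed into the outer divergence on the right-hand side of (\ref{2vgammas2}) at the cost of a product-rule remainder $-\gamma(\gamma-1)(p-1)\, v^{\gamma-2}\, v_k\, V(Dv)\, V_{\xi_k}(Dv)$, which one last application of Euler's identity $v_k V_{\xi_k}(Dv) = p V(Dv)$ collapses to $-\gamma(\gamma-1)\,p(p-1)\, v^{\gamma-2} V^2(Dv)$. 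Careful sign bookkeeping then produces exactly (\ref{S2W=equazione-puntuale-norma}). The whole calculation is driven entirely by the homogeneity of $V$ and the divergence-free property of $\sdueij{W}$; the only nontrivial ingredient is the latter.
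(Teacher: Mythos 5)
Your proof is correct and follows essentially the same route as the paper: both rest on the divergence-free property $\partial_j\sdueij{W}=0$ for the first identity and on the $p$-homogeneity of $V$ (equivalently the Euler identities for $V$ and $\nabla_\xi V$) for the second. The only difference is cosmetic — you simplify $\sdueij{W}V_{\xi_i}v_j$ via homogeneity first and then extract the divergence term, whereas the paper inserts the $\gamma(p-1)v^{\gamma-1}V\nabla_\xi V$ term into the divergence up front and simplifies afterward — but the underlying algebra and the ingredients used are the same.
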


\begin{proof}	
For simplicity of exposition, we omit the dependency on $Dv$ in the argument of $V$ and $H$, so that $H$ and $V$ will be always evaluated at $Dv$.

From (\ref{defS2}) and (\ref{divS2=0}) it holds that $2S^2(W)=\diver(\sdueij{W}V_{\xi_i})$, and hence we find \eqref{2vgammas2}.
Moreover, we have
\begin{multline}\label{divervgammaS2}
	\diver\Big(v^\gamma\sdueij{W}V_{\xi_i}+ \gamma(p-1)v^{\gamma-1}V\nabla_\xi V\Big ) \\ = 2v^{\gamma}S^2(W) + \gamma v^{\gamma-1} \sdueij{W}V_{\xi_i}v_j+\gamma(p-1)\diver(v^{\gamma-1}V\nabla_\xi V).
	\end{multline}
From the definition of  $\sdueij{W}$ (\ref{S2Wij=}) we find
	$$
	\sdueij{W}V_{\xi_i}v_j=-V_{\xi_j\xi_l}v_{l i}V_{\xi_i}v_j+ V_{\xi_i}v_i \tr(W)
	$$
	and, since
	$$
	\diver(v^{\gamma-1}V\nabla_\xi V)=(\gamma-1)v^{\gamma-2}VV_{\xi_i}v_i+v^{\gamma-1}V_{\xi_i}V_{\xi_j}v_{ij}+v^{\gamma-1}V\tr (W) \,,
	$$
	from (\ref{divervgammaS2}) we obtain
	\begin{eqnarray} \label{pippo}
	2v^{\gamma}S^2(W) &=& \diver\Big(v^\gamma\sdueij{W}V_{\xi_i}+ \gamma(p-1)v^{\gamma-1}V\nabla_\xi V\Big)\nonumber \\
	&&  -\gamma(\gamma-1)(p-1)v^{\gamma-2}VV_{\xi_i}v_i - \gamma v^{\gamma-1}((p-1)V+V_{\xi_i}v_i) \tr(W)  \\\nonumber
	&& - \gamma\; v^{\gamma-1}\Big( (p-1)V_{\xi_i}V_{\xi_j}v_{ij}+V_{\xi_j\xi_l}v_{l i}V_{\xi_i}v_j\Big) \,.
	\end{eqnarray}
In the case $V=H^p/p$ and $H$ is a norm, the last term on the right hand side reads
\begin{multline} \label{freddo}
(p-1)V_{\xi_i}V_{\xi_j}v_{ij}-V_{\xi_j\xi_l}v_{li}V_{\xi_i}v_j  \\ = (p-1) H^{2(p-1)} H_{\xi_j} H_{\xi_i} v_{ij} - H^{p-1}\left( (p-1) H^{p-2} H_{\xi_j} H_{\xi_l} + H^{p-1} H_{\xi_j \xi_l}\right) H_{\xi_i} v_j v_{li} = 0\,,
\end{multline}
where the last equality follows from \eqref{Hhomog1} and \eqref{Hhomog2}, and from \eqref{pippo} we obtain \eqref{S2W=equazione-puntuale-norma}.
\end{proof}

\subsection{Interior problem} \label{subsect_interior}
We use Lemma \ref{lemmaS2W=} to obtain an integral identity for the solutions of \eqref{int-pbu}. 
The integration by parts formula in the following lemma was already obtained in  \cite[Lemma 4.3]{CS} for the case $p=2$. The proof in \cite{CS} makes use of the $W^{2,2}$ regularity of the solution $u$, which is not available in the general case $p>1$. For this reason, we argue in a different way.

\begin{lemma} \label{lemma_ident_int_1}
	Let $\Omega\subset\rn$ be a bounded domain with boundary of class $C^{2,\alpha}$. Let $H$ be a norm in $\hnorm$, $p>1$, and let $u$ be the solution to (\ref{int-pbu}). Then the identity
	\begin{equation}\label{S2W=equazione-interno}
	\int_\Omega 2 \phi S^2(W)\, dx = -\int_\Omega \sdueij{W}V_{\xi_i}(Du) \phi_j \, dx \,,
	\end{equation}	
holds for every $\phi \in C^1(\overline \Omega)$ such that $\phi=0$ on $\partial \Omega$, with $W=\nabla^2 V(Du)D^2u$, where $V$ is given by \eqref{V_def}.
\end{lemma}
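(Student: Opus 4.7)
The strategy is to avoid the $W^{2,2}$ regularity of $u$ used in \cite{CS} (which may fail when $p>2$) and to work directly with the weak $L^2$-gradient of the vector field $Z=\nxi V(Du)$; explicitly, $Z_i:=V_{\xi_i}(Du)=H^{p-1}(Du)\,H_{\xi_i}(Du)$ belongs to $W^{1,2}(\Omega)\cap L^\infty(\Omega)$ by Lemma~\ref{lemma_regularity_int} (the $L^\infty$ bound follows from $u\in C^{1,\alpha}(\overline{\Omega})$).

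First, I rewrite both sides of \eqref{S2W=equazione-interno} in terms of $Z$. Since $W_{ij}=V_{\xi_i\xi_k}(Du)\,u_{kj}=\partial_j Z_i$ in the sense of weak derivatives and $\tr(W)=\Delta^H_p u=-1$, formula \eqref{S2Wij=} gives $\sdueij{W}=-\partial_i Z_j-\delta_{ij}$, whence
\begin{equation*}
2\,S^2(W)=1-\partial_i Z_j\,\partial_j Z_i,\qquad \sdueij{W}\,V_{\xi_i}(Du)=-Z_i\,\partial_i Z_j-Z_j.
\end{equation*}
Moreover, since $\partial\Omega$ is Lipschitz, the weak formulation \eqref{int-pbu_distr} extends by density to test functions in $W^{1,\infty}_0(\Omega)$, so that $\int_\Omega Z_j\,\phi_j\,dx=\int_\Omega\phi\,dx$ for the $\phi$ under consideration. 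A short algebraic manipulation then shows that the identity \eqref{S2W=equazione-interno} is equivalent to the single claim
\begin{equation*}
\int_\Omega \partial_i Z_j\,\partial_j(\phi\,Z_i)\,dx=0.
\end{equation*}

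To establish this claim, I show that for each fixed index $i$ the $L^2$-vector field $j\mapsto \partial_i Z_j$ is distributionally divergence-free in $\Omega$. Indeed, for $\psi\in C^\infty_c(\Omega)$ two integrations by parts yield
\begin{equation*}
\int_\Omega \partial_i Z_j\,\partial_j\psi\,dx \;=\; -\int_\Omega Z_j\,\partial_j(\partial_i\psi)\,dx \;=\; -\int_\Omega \partial_i\psi\,dx \;=\; 0,
\end{equation*}
where the first step moves $\partial_i$ onto $\psi$ (legitimate because $\partial_i Z_j$ is a genuine weak derivative of $Z_j\in L^2(\Omega)$ paired against $\partial_j\psi\in C^\infty_c(\Omega)$), and the second uses the weak equation $\partial_j Z_j=-1$ with test function $\partial_i\psi\in C^\infty_c(\Omega)$. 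By density this identity persists for every $\psi\in W^{1,2}_0(\Omega)$.

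The only remaining technical point, and the main subtlety of the argument, is to verify that $\phi\,Z_i\in W^{1,2}_0(\Omega)$ so that the previous identity may be applied with $\psi=\phi\,Z_i$ for each $i$ and then summed. Since $\partial\Omega$ is $C^{2,\alpha}$, hence Lipschitz, and $\phi\in C^1(\overline{\Omega})$ vanishes on $\partial\Omega$, one has $\phi\in W^{1,\infty}_0(\Omega)$; combined with $Z_i\in W^{1,2}(\Omega)\cap L^\infty(\Omega)$, the product $\phi\,Z_i$ lies in $W^{1,2}(\Omega)$ and has vanishing trace on $\partial\Omega$, hence belongs to $W^{1,2}_0(\Omega)$. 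Applying the divergence-free identity with $\psi=\phi\,Z_i$ and summing over $i$ yields the claim, and consequently \eqref{S2W=equazione-interno}.
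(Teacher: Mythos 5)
Your proof is correct, and it takes a genuinely different route from the paper's. The paper mollifies the vector field $a^i=V_{\xi_i}(Du)$ on the interior, derives the pointwise identity $2S^2(W^\ep)=\sum_{i,j}\partial_j(S^2_{ij}(W^\ep)a^i_\ep)$ on $\Omega_{\ep_0}$, integrates by parts, and then passes to a double limit (first $\ep\to0$ in the mollification, then $\ep_0\to0$ in the shrinking domain), using the near-boundary $C^{2,\alpha}$ regularity to control the boundary term $\int_{\partial\Omega_{\ep_0}}\phi\,S^2_{ij}(W^\ep)a^i_\ep\nu_j$ via $|\phi|\lesssim\ep_0$ there. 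You instead work directly with $Z\in W^{1,2}(\Omega)\cap L^\infty(\Omega)$: you observe that \eqref{S2W=equazione-interno} is algebraically equivalent, via $\tr W=-1$ and the weak equation, to the single bilinear statement $\int_\Omega\partial_iZ_j\,\partial_j(\phi Z_i)\,dx=0$, and you prove this by showing that each row of $DZ$ is distributionally divergence-free (because $\diver Z$ is constant, so $\partial_i\diver Z=0$ in the sense of distributions), then extending to $W^{1,2}_0$ test functions and applying it with $\psi=\phi Z_i$. Your argument avoids mollification and the double-limit/shrinking-domain mechanics, and it does not reuse the near-boundary $C^{2,\alpha}$ smoothness inside the proof of the lemma — although of course both approaches rest on Lemma~\ref{lemma_regularity_int} (which itself used that regularity) to guarantee $Z\in W^{1,2}(\Omega)$. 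One small wording quibble: rather than saying the weak formulation "extends to $W^{1,\infty}_0(\Omega)$", it is cleaner (and all you need) to say it extends to $W^{1,2}_0(\Omega)$ test functions, since $Z\in L^2(\Omega)$; the $\phi$'s under consideration, being $C^1(\overline\Omega)$ and vanishing on a $C^{2,\alpha}$ boundary, certainly lie in $W^{1,2}_0(\Omega)$.
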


\begin{proof}
Let $\ep>0$ be sufficiently small and define $\Omega_\ep = \{x \in \Omega:\ {\rm dist}(x,\partial \Omega)>\ep\}$. Let 
$$
a^i (x) = V_{\xi_i}(Du(x))  \quad \text{ for every } i=1,\ldots,N\,,\ x \in \Omega\,.
$$
We mention that $a^i \in W^{1,2}(\Omega)$, $i=1,\ldots,N$, as follows from Lemma \ref{lemma_regularity_int}. With this notation, the elements $w_{ij}$ of the matrix $W$ are given by $w_{ij}=\partial_j a^i$. Let 
$\rho_\ep$ be a family of mollifiers and define $a_\ep^i = a^i \ast \rho_\ep $.
 Let $W^{\ep}=(w^\ep_{ij})_{i,j=1,\ldots,N}$ where $w_{ij}^\ep = \partial_j a^i_\ep$, and notice that 
\begin{equation} \label{trace_W_conv}
\tr  W^{\ep}= \tr W = -1 
\end{equation}
for every $x \in \Omega_\ep$.

Let $i,j=1,\ldots,N$ be fixed. We have
\begin{equation*}
\begin{split}
w_{ji}^\ep  w_{ij}^\ep & =  \partial_j (a^i_\ep \partial_i a^j_\ep) - a^i_\ep \partial_j \partial_i a^j_\ep \\
& =   \partial_j (a^i_\ep \partial_i a^j_\ep) - a^i_\ep \partial_i \partial_j a^j_\ep  \\
& =\partial_j (a^i_\ep \partial_i a^j_\ep) -  a^i_\ep \partial_i w_{jj}^\ep \,,
\end{split}
\end{equation*}
for every $x \in \Omega_\ep$, and by summing over $j=1,\ldots,N$ and using \eqref{trace_W_conv} (so that  $\partial_i \sum_j w_{jj}^\ep=0$) we obtain 
\begin{equation*}
\begin{split}
\sum_j w_{ji}^\ep  w_{ij}^\ep &  = \sum_j \partial_j (a^i_\ep \partial_i a^j_\ep) \\
& = w_{ii}^\ep \tr W^\ep - \sum_j \partial_j( S^2_{ij}(W^\ep) a_\ep^i )  \,, \quad x \in \Omega_\ep \,.
\end{split}
\end{equation*}
By summing over $i=1,\ldots,N$, from \eqref{defS2} we have
\begin{equation}\label{S2ep}
2 S^2(W^\ep)= \sum_{i,j} \partial_j( S^2_{ij}(W^\ep) a^i_\ep )  \,, \quad x \in \Omega_\ep \,.
\end{equation}
Let $\ep_0>0$ be such that $u \in C^{2,\alpha}$ in $\bar \Omega \setminus \Omega_{\ep_0}$ (this is always possible since $H(Du)>0$ on $\partial \Omega$). We notice that, by a standard barrier argument, one can obtain bounds on $Du$ on $\partial \Omega$ and, since $u=0$ on $\partial \Omega$, the equation gives a bound on $D^2u$ on $\partial \Omega$. Thanks to the $C^{2,\alpha}$ regularity of $u$, we  obtain a bound on the $C^2$ norm of $u$ in $\bar \Omega \setminus \Omega_{\ep_0}$ which does not depends on $\epsilon_0$.

Let $\ep <\ep_0$. An integration by parts and \eqref{S2ep} give
$$
\Big{|} 2 \int_{\Omega_{\ep_0}} \phi S^2(W^\ep) +\int_{\Omega_{\ep_0}} \sdueij{W^\ep}a^i_\ep \phi_j \, dx \Big{|} = \Big{|}  \int_{\partial \Omega_{\ep_0}} \phi \sdueij{W^\ep}a^i_\ep \nu_j  \Big{|} \leq c \ep_0  \,,
$$
where $c$ depends on $\|\phi\|_{C^1(\overline \Omega)}$ and bounds on $H(Du)$ on $\partial \Omega$. The assertion follows by letting firstly $\ep$ and then $\ep_0$ to zero.
\end{proof}

\begin{lemma} \label{lemma_ident_int}
	Let $\Omega\subset\rn$ be a bounded domain with boundary of class $C^{2,\alpha}$. Let $H$ be a norm in $\hnorm$, $p>1$, and let $u$ be the solution to (\ref{int-pbu}). Then the identity
	\begin{equation}\label{S2W=equazione-interno_II}
	\int_\Omega 2u S^2(W)\, dx = -\frac{(p-1)}p \int_{\bd \Omega}  H^{2p-1}(Du) H(\nu) \,d\sigma(x) +\frac{(2p-1)}p\int_\Omega  H^p(Du)\, dx
	\end{equation}	
holds with $W=\nabla^2 V(Du)D^2u$, where $V$ is given by \eqref{V_def}.
\end{lemma}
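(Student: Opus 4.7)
The natural starting point is Lemma~\ref{lemma_ident_int_1}: since $u \in C^1(\overline{\Omega})$ with $u=0$ on $\partial\Omega$, we may take $\phi=u$ as a test function there and get
$$
\int_\Omega 2u\, S^2(W)\, dx \;=\; -\int_\Omega S^2_{ij}(W)\, V_{\xi_i}(Du)\, u_j\, dx.
$$
So the whole task reduces to identifying the integrand on the right-hand side. The cleanest conceptual route is to note that the pointwise identity \eqref{S2W=equazione-puntuale-norma} of Lemma~\ref{lemmaS2W=}, specialized to $v=u$, $V=H^p/p$ and $\gamma=1$ (which makes the $\gamma(\gamma-1)$ term vanish), reads
$$
2u\, S^2(W) \;=\; \diver\!\Bigl(u\, S^2_{ij}(W)\, V_{\xi_i}(Du) + (p-1)\,V(Du)\,\nabla_\xi V(Du)\Bigr) - (2p-1)\,V(Du)\,\Delta_p^H u.
$$
Integrating over $\Omega$ and applying the divergence theorem (remembering that $\nu$ is the \emph{inward} unit normal, so the outward normal is $-\nu$), the piece $u\,S^2_{ij}(W)\,V_{\xi_i}(Du)\,\nu_j$ at the boundary vanishes because $u=0$ on $\partial\Omega$. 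Using $\Delta_p^H u = -1$ in the bulk and the equation of state $V(Du)=H^p(Du)/p$ converts the interior contribution to $\frac{2p-1}{p}\int_\Omega H^p(Du)\,dx$. On the boundary, since $u$ attains its minimum there and is positive inside, $Du = |Du|\,\nu$; the $0$-homogeneity of $\nabla_\xi H$ and Euler's identity $H_{\xi_j}(\nu)\nu_j=H(\nu)$ then give $V_{\xi_j}(Du)\,\nu_j = H^{p-1}(Du)\,H(\nu)$, so that $V(Du)\,V_{\xi_j}(Du)\,\nu_j = \frac{1}{p}\,H^{2p-1}(Du)\,H(\nu)$, producing exactly the boundary term $-\frac{p-1}{p}\int_{\partial\Omega}H^{2p-1}(Du)\,H(\nu)\,d\sigma$.

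An equivalent direct route, which avoids having to quote the full pointwise identity, expands $S^2_{ij}(W)=-W_{ji}+\delta_{ij}\Delta_p^H u$ inside the integral. The $\delta_{ij}\Delta_p^H u$ piece collapses using Euler's identity $V_{\xi_i}(Du)u_i = p\,V(Du) = H^p(Du)$ and $\Delta_p^H u=-1$ to give $\int_\Omega H^p(Du)\,dx$. For the remaining integral $\int_\Omega W_{ji}\,V_{\xi_i}(Du)\,u_j\,dx = \int_\Omega (\partial_i V_{\xi_j}(Du))\,V_{\xi_i}(Du)\,u_j\,dx$, one differentiates Euler's identity $V_{\xi_j}(Du)\,u_j = pV(Du)$ to get $(\partial_i V_{\xi_j}(Du))\,u_j = p\,\partial_i V(Du) - V_{\xi_j}(Du)\,u_{ij}$, and then uses identity \eqref{freddo} from the proof of Lemma~\ref{lemmaS2W=} in the form $V_{\xi_i}(Du)V_{\xi_j}(Du)u_{ij}=\frac{1}{p-1}W_{ji}V_{\xi_i}(Du)u_j$. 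Substituting yields an algebraic relation that can be solved to give $\int_\Omega W_{ji}V_{\xi_i}(Du)u_j\,dx = (p-1)\int_\Omega V_{\xi_i}(Du)\,\partial_i V(Du)\,dx$. A last integration by parts on the right, using $\partial_i V_{\xi_i}(Du)=\Delta_p^H u = -1$, converts this to a boundary integral plus $\frac{p-1}{p}\int_\Omega H^p(Du)\,dx$, and the boundary piece is evaluated exactly as above.

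\textbf{Main obstacle.} The only real difficulty is regularity: the pointwise identity of Lemma~\ref{lemmaS2W=} assumes $v\in C^3$, whereas we only have $u\in C^{1,\alpha}(\overline{\Omega})$ with $V_{\xi_i}(Du)\in W^{1,2}(\Omega)$ (and $u\in W^{2,2}$ only when $1<p\le 2$), so $u_{ij}$ need not be in $L^2$ when $p>2$. This is precisely why Lemma~\ref{lemma_ident_int_1} was established first via mollification of $V_{\xi_i}(Du)$, and we leverage that lemma here so that $S^2(W)$ is handled via the identity $2S^2(W)=\partial_j(S^2_{ij}(W)V_{\xi_i}(Du))$ in the weak sense. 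The bulk manipulations we add afterwards are written entirely in terms of $V_{\xi_i}(Du)\in W^{1,2}(\Omega)\cap L^\infty(\Omega)$ — the occurrences of $u_{ij}$ above can all be replaced by weak derivatives of $V_{\xi_j}(Du)$ once Euler's identity has been applied — so the integrations by parts remain legitimate; the boundary integrands are evaluated using the $C^{2,\alpha}$ regularity of $u$ in a neighborhood of $\partial\Omega$, which was established in the proof of Lemma~\ref{lemma_regularity_int} from the barrier estimate $H(Du)>0$ on $\partial\Omega$.
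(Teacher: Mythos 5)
Your proposal is correct and takes essentially the same route as the paper: it starts from Lemma~\ref{lemma_ident_int_1} with $\phi=u$ (to bypass the lack of $W^{2,2}$ regularity for $p>2$), uses the same homogeneity identities — Euler's relation $V_{\xi_i}(Du)u_i=pV(Du)$, its differentiated form which gives \eqref{freddo}, and the $0$-homogeneity of $\nabla_\xi H$ — to reduce the interior integral to $\frac{2p-1}{p}\int_\Omega H^p(Du)\,dx$ plus an integrated $\diver(V\nabla_\xi V)$ term, and then evaluates the boundary flux via $Du=|Du|\nu$ and $H_{\xi_j}(\nu)\nu_j=H(\nu)$. Your "second route" is in fact a transparent rearrangement of the paper's own argument, and your specialization of \eqref{S2W=equazione-puntuale-norma} to $\gamma=1$ in the "first route" is exactly the pointwise identity underlying the computation; your remarks on why all quantities remain meaningful with only $\nabla_\xi V(Du)\in W^{1,2}(\Omega)$ and $u\in C^{2,\alpha}$ near $\partial\Omega$ match the paper's approximation strategy as well.
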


\begin{proof} 
From 
$$
\diver(V\nabla_\xi V)= V_{\xi_i}V_{\xi_j}v_{ij}+V\tr (W) = V_{\xi_i}V_{\xi_j}v_{ij} - V  
$$
by multiplying by $(p-1)$ and using that $\tr(W)=-1$, we have
$$
(p-1)\int_{\partial \Omega} V\langle \nabla_\xi V; \nu \rangle  = (p-1)\int_\Omega \left(V_{\xi_i}V_{\xi_j}v_{ij} - V  \right)\,.
$$
We sum this identity and \eqref{S2W=equazione-interno} with $\phi=u$ and we find
\begin{equation} \label{efinita}
\int_\Omega 2 u S^2(W) = \int_\Omega \left\{ (p-1) \left[V_{\xi_i}V_{\xi_j}v_{ij} - V  \right]
 - \sdueij{W}V_{\xi_i} u_j \right\} - (p-1)\int_{\partial \Omega} \langle \nabla_\xi V; \nu \rangle  \,. 
 \end{equation}

Analogously to what we did in the proof of Lemma \ref{lemmaS2W=}, we obtain that 
$$
(p-1) \left[V_{\xi_i}V_{\xi_j}v_{ij} - V  \right] - \sdueij{W}V_{\xi_i} u_j = \frac{(2p-1)}p  H^p(Du) \quad \text{ a.e. in } \Omega\,,
$$
and from \eqref{efinita}, $\nu=Du/|Du|$ and \eqref{Hhomog1}, we find \eqref{S2W=equazione-interno_II}.
\end{proof}

An straightforward consequence of Lemma \ref{lemma_ident_int} is the following corollary.

\begin{cor}\label{int-condiz-int}
Let $\Omega,\, H\,, u, $ be as in Lemma \ref{lemma_ident_int}. Then
	\begin{equation} \label{ineq_interior}
	N(p-1)\int_{\bd\Omega}H(\nu)H^{2p-1}(Du)\; d\sigma \geq \Big(N(p-1)+p\Big) \int_{\Omega} u \;dx \,,
%
%
	\end{equation}
where the equality sign is attained if and only if there exists a constant $\lambda$ such that $W(x)=\lambda Id$.
\end{cor}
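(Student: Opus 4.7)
The plan is a direct combination of the integral identity in Lemma \ref{lemma_ident_int} with the pointwise Newton/Cauchy--Schwarz inequality in Lemma \ref{lemma-newton}, followed by the simple observation that the $p$-energy of $u$ equals $\int_\Omega u$.

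First, apply Lemma \ref{lemma-newton} pointwise to $W=\nxi^2 V(Du)\,D^2 u$: since $\nxi^2V(Du)$ is positive definite (because $H\in\hnorm$) and $\tr(W)=\Delta_p^H u=-1$, we obtain
\[
S_2(W) \le \frac{N-1}{2N}\,(\tr W)^2 = \frac{N-1}{2N} \qquad \text{a.e.\ in } \Omega.
\]
By the weak maximum principle applied to \eqref{int-pbu}, $u\ge 0$ in $\Omega$, so I can multiply the previous inequality by $2u$ and integrate, getting
\[
\int_\Omega 2u\, S_2(W)\,dx \;\le\; \frac{N-1}{N}\int_\Omega u\,dx.
\]
Combining this with the integral identity \eqref{S2W=equazione-interno_II} of Lemma \ref{lemma_ident_int} yields
\[
-\frac{p-1}{p}\int_{\partial\Omega} H(\nu)\,H^{2p-1}(Du)\,d\sigma + \frac{2p-1}{p}\int_\Omega H^p(Du)\,dx \;\le\; \frac{N-1}{N}\int_\Omega u\,dx.
\]

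Next, I will use $u$ itself as a test function in the weak formulation \eqref{int-pbu_distr} to obtain
\[
\int_\Omega H^p(Du)\,dx = \int_\Omega u\,dx,
\]
which is legitimate since $u\in W^{1,p}_0(\Omega)$. Substituting this identity into the previous display and rearranging,
\[
\frac{p-1}{p}\int_{\partial\Omega} H(\nu)\,H^{2p-1}(Du)\,d\sigma \;\ge\; \left(\frac{2p-1}{p}-\frac{N-1}{N}\right)\int_\Omega u\,dx = \frac{N(p-1)+p}{pN}\int_\Omega u\,dx,
\]
and multiplying both sides by $\tfrac{Np}{p-1}$ gives exactly \eqref{ineq_interior}.

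For the equality case, note that every inequality in the chain above is an equality if and only if Newton's inequality is an equality pointwise on $\{u>0\}$. By the strong maximum principle $u>0$ in $\Omega$, so equality propagates to the whole of $\Omega$, and Lemma \ref{lemma-newton} yields $W=\tfrac{\tr W}{N}Id = -\tfrac{1}{N}Id$, i.e.\ $W=\lambda\,Id$ with $\lambda=-1/N$. Conversely, if $W=\lambda Id$ everywhere then $S_2(W)=\tfrac{N-1}{2N}(\tr W)^2$ pointwise and the chain of inequalities collapses to equalities. I expect no serious obstacle here since the delicate regularity/approximation work has already been carried out in Lemma \ref{lemma_ident_int}; the only minor care needed is the sign of $u$ and the use of $u$ as an admissible test function, both of which are standard.
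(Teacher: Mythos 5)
Your proof is correct and follows essentially the same route as the paper: apply Newton's inequality (Lemma \ref{lemma-newton}) pointwise to $W$, multiply by $2u\ge 0$, integrate, compare against the integral identity \eqref{S2W=equazione-interno_II} of Lemma \ref{lemma_ident_int}, and replace $\int_\Omega H^p(Du)$ by $\int_\Omega u$ via the weak formulation. One tiny arithmetic slip: in the last step you should multiply by $Np$, not by $\tfrac{Np}{p-1}$, to land directly on \eqref{ineq_interior} (check: $\tfrac{p-1}{p}\cdot Np = N(p-1)$ and $\tfrac{N(p-1)+p}{pN}\cdot Np = N(p-1)+p$); as written, your normalization gives the inequality divided through by $(p-1)$. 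Your treatment of the equality case is actually a bit more careful than the paper's: the paper passes directly from equality in the integral inequality to $W=\lambda(x)Id$, silently ignoring the weight $u$ under the integral, whereas you correctly note that one first gets pointwise equality on $\{u>0\}$ and then propagates it to $\Omega$; either invoking a strong maximum principle as you do, or simply observing that $\{u=0\}\cap\Omega$ has empty interior (testing $\Delta_p^H u=-1$ against a bump supported there) and that the a.e.\ statement is what matters, closes that gap.
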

\begin{proof}
We notice that from \eqref{int-pbu} we have that 
$$
\int_\Omega u \; dx = \int_\Omega H^p(Du) \; dx \,.
$$	
From (\ref{S2W=equazione-interno}) and Lemma \ref{lemma-newton}, recalling that $\tr(W)=\Delta^H_pu$ and that $u$ solves Problem (\ref{int-pbu}), we immediately obtain \eqref{ineq_interior}. From Lemma \ref{lemma-newton} we find that the equality is attained if and only if there exists a function $\lambda(x)$ such that $W=\lambda(x) Id$. Moreover, since $u$ satisfies \eqref{int-pbu}, then $\tr(W)=-1$, which implies that $\lambda(x)$ must be constant.
\end{proof}

\subsection{Exterior problem}\label{subsect_exterior}
We apply the machinery described in Subsection \ref{subsect_ideas} to the auxiliary function $v$ defined by
\begin{equation} \label{v_def}
v(x)=u^{\frac p{p-N}}(x) \,,
\end{equation}
$x \in \rn \setminus \Omega$, where $u$ is the solution to \eqref{ext-pbu}. This choice is motivated by the following argument: if $\Omega=B_{H_0}$ then $u(x)=H_0^{\frac{p-N}{p-1}}(x)$ and $v(x)=H_0^{\frac{p}{p-1}}(x)$. This implies that if $\Omega=B_{H_0}$ then $\nabla_\xi^2 V(Dv) D^2 v$ achieves the equality sign in \eqref{newtonIneq}.

Since $u$ solves \eqref{ext-pbu}, straightforward computations show that $v$ satisfies
\begin{equation}\label{ext-pbv}
\begin{cases}
\Delta^H_p v =N\dfrac{p-1}p \dfrac{H^p(Dv)}{v}\qquad&\text{in }\rn\setminus\overline{\Omega},\\
v=1\qquad&\text{on }\bd\Omega \,,\\
v\to +\infty\qquad&\text{if } |x|\to +\infty \,.
\end{cases}
\end{equation}
Moreover, we notice that the Neumann boundary condition $H(Du)=C$ implies
\begin{equation}\label{HDvcost}
H(Dv)=\frac p{N-p}C \qquad\text{on }\bd\Omega \,,
\end{equation}
where we have used $H(Dv)=H(-Dv)$.
We stress that, by geometric reasons, the constant $C$ is forced to be
\begin{equation}\label{valueC}
C= \frac{N-p}{N(p-1)}\frac{P_H(\Omega)}{|\Omega|}.
\end{equation}
A proof of this fact can be found in Appendix \ref{appendix-valoreC}.

Now we use Lemma \ref{lemmaS2W=} to obtain an integral identity for $v=u^{\frac{p}{p-N}}$.

\begin{lemma} \label{lemma_ident_ext}
	Let $\Omega\subset\rn$ be a bounded convex domain with boundary of class $C^{2,\alpha}$.
	Let $H$ be a norm in $\hnorm$, $p>1$, $V$ as in (\ref{V_def}), and let $v$ be given by \eqref{v_def}. Then we have
	\begin{eqnarray}\label{S2W=equazione-esterno}
	\int_{\rn\setminus\overline{\Omega}} 2v^{\gamma} S^2(W)\, dx& =&	 N(N-1)\frac{(p-1)^2}{p^2}\int_{\rn\setminus\overline{\Omega}}H^{2p}(Dv)v^{\gamma-2}\, dx\\ \nonumber
	&& -(N-1)\int_{\bd\Omega} {H^{2p-2}(Dv)} H(\nu) \Big( \frac{\Huno(\bd\Omega)}{N-1} -\frac{p-1}p\frac{H(Dv)}v \Big)\, d\sigma \,,
	\end{eqnarray}	
where $W=\nabla^2 V(Dv)D^2v$ and $\gamma=(1-N)$.
\end{lemma}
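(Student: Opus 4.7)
The plan is to apply the pointwise identity \eqref{S2W=equazione-puntuale-norma} of Lemma~\ref{lemmaS2W=} to the auxiliary function $v=u^{p/(p-N)}$ with $V=H^p/p$ and $\gamma=1-N$, integrate over $(\rn\setminus\overline{\Omega})\cap B_R$, let $R\to\infty$, and compute the resulting boundary integral on $\partial\Omega$. The $C^3$ regularity needed to invoke Lemma~\ref{lemmaS2W=} pointwise holds in the interior of $\rn\setminus\overline{\Omega}$: by Theorem~\ref{teostime}, $u\in C^{2,\alpha}$ and $H(Du)$ is bounded away from zero on every compact subset, so the equation is uniformly elliptic with smooth coefficients in $Du$ and a standard Schauder bootstrap gives $u\in C^\infty$ in the open exterior; the $C^{2,\alpha}$ regularity up to $\partial\Omega$ is enough to make sense of the boundary terms. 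Substituting the equation \eqref{ext-pbv}, $\Delta^H_p v=\tfrac{N(p-1)}{p}H^p(Dv)/v$, into the two non-divergence terms of \eqref{S2W=equazione-puntuale-norma} and using $\gamma(\gamma-1)=N(N-1)$, these terms combine into exactly $N(N-1)\tfrac{(p-1)^2}{p^2}v^{\gamma-2}H^{2p}(Dv)$, which is the volume integrand appearing on the right-hand side of \eqref{S2W=equazione-esterno}.

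Integration over $(\rn\setminus\overline{\Omega})\cap B_R$ and the divergence theorem produce boundary integrals on $\partial B_R$ and on $\partial\Omega$; the outward normal to the exterior domain at $\partial\Omega$ coincides with the inward normal $\nu$ to $\Omega$. The contribution on $\partial B_R$ vanishes as $R\to\infty$: from Theorem~\ref{teostime} one has $v\sim H_0^{p/(p-1)}$, $H(Dv)\sim H_0^{1/(p-1)}$, $|D^2v|\sim H_0^{(2-p)/(p-1)}$, so each component of the vector field $v^\gamma\sdueij{W}V_{\xi_i}(Dv)+\gamma(p-1)v^{\gamma-1}V(Dv)V_{\xi_j}(Dv)$ decays fast enough that its flux through $\partial B_R$ is $O(R^{(p-N)/(p-1)})\to 0$, since $p<N$.

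The heart of the argument is the evaluation of the boundary integral on $\partial\Omega$. Since $v=1$ there, the factors $v^\gamma$ and $v^{\gamma-1}$ drop out. Because $v$ grows moving away from $\Omega$, on $\partial\Omega$ we have $Dv=-(H(Dv)/H(\nu))\nu$, and using the homogeneity of $H$ together with the Euler-type identity $H_{\xi_i\xi_j}(\nu)\nu_j=0$ coming from \eqref{Hhomog2}, a direct computation gives $V_{\xi_j\xi_k}(Dv)\nu_j=-(p-1)H^{p-2}(Dv)H(\nu)H_{\xi_k}(Dv)$. Formula \eqref{HDeltap=M} applied to $v$ on $\partial\{v>1\}=\partial\Omega$, once the sign change in the anisotropic mean curvature due to $Dv/|Dv|=-\nu$ is accounted for, yields $(p-1)H^{p-2}(Dv)H_{\xi_k}(Dv)H_{\xi_i}(Dv)v_{ki}=\Delta^H_p v-H^{p-1}(Dv)\Huno_{\partial\Omega}$; inserting $\Delta^H_p v\big|_{\partial\Omega}=\tfrac{N(p-1)}{p}H^p(Dv)$ and combining with the trace contribution $\delta_{ij}\Delta^H_p v\cdot V_{\xi_i}(Dv)\nu_j$ of $\sdueij{W}V_{\xi_i}(Dv)\nu_j$, the two $H^{2p-1}(Dv)H(\nu)$ terms cancel and one is left with $\sdueij{W}V_{\xi_i}(Dv)\nu_j=-H^{2p-2}(Dv)H(\nu)\Huno_{\partial\Omega}$. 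Finally, the remaining boundary contribution $\gamma(p-1)V(Dv)V_{\xi_j}(Dv)\nu_j=\tfrac{(N-1)(p-1)}{p}H^{2p-1}(Dv)H(\nu)$ combines with the previous expression to produce exactly the boundary integrand in \eqref{S2W=equazione-esterno}.

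The main obstacle is this last, boundary-wise computation: one has to track orientation carefully (since $Dv$ is antiparallel to $\nu$) and match the sign conventions for the anisotropic mean curvature when $\partial\Omega$ is viewed as $\partial\{v>1\}$ (with level-set normal $-\nu$) versus as the boundary of $\Omega$ (with inward normal $\nu$). The cancellation of the $H^{2p-1}(Dv)H(\nu)$ terms against the trace part of $\sdueij{W}$ is the key algebraic identity that makes $\Huno_{\partial\Omega}$ appear with the correct coefficient.
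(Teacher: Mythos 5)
The boundary computation, the orientation/sign bookkeeping, and the decay estimate at infinity in your proposal are all correct and match the paper. But the justification you offer for applying the pointwise identity of Lemma~\ref{lemmaS2W=} does not hold, and this is a genuine gap.

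You claim that a Schauder bootstrap gives $u\in C^\infty$ in the open exterior because ``the equation is uniformly elliptic with smooth coefficients in $Du$.'' The coefficients are \emph{not} smooth in $\xi$: they are
$$a_{ij}(\xi)=H^{p-2}(\xi)\bigl((p-1)H_{\xi_i}(\xi)H_{\xi_j}(\xi)+H(\xi)H_{\xi_i\xi_j}(\xi)\bigr),$$
which involve $\nabla_\xi^2 H$, and the standing hypothesis is only $H\in\hnorm\subset C^{2,\alpha}(\rn\setminus\{0\})$. Hence $a_{ij}\in C^{\alpha}$, and Schauder stalls at $u\in C^{2,\alpha}$; there is no bootstrap to higher regularity without assuming $H$ smoother. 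The same obstruction hits the other hypothesis of Lemma~\ref{lemmaS2W=}: even if $v$ were $C^3$, that lemma also requires $V\in C^3$, while $V=H^p/p$ is only $C^{2,\alpha}$. So the pointwise identity~\eqref{S2W=equazione-puntuale-norma} cannot be applied directly as you do. The paper's proof resolves exactly this by a double approximation: $V$ is replaced by a sequence $V_k\in C^3$ converging to $V$ in $C^{2,\alpha}$, the corresponding identity~\eqref{formulone} is obtained for $C^3$ test functions $v$ (and extended to the actual $v\in C^{2,\alpha}$ by an approximation in $C^{2,\alpha}$, using that $H(Dv)$ is bounded away from zero), and only then does one pass to the limit $k\to\infty$ using the uniform convergence guaranteed by Theorem~\ref{teostime}. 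Your argument needs this approximation step; the rest of your computation (substituting \eqref{ext-pbv} to get the volume term $N(N-1)\tfrac{(p-1)^2}{p^2}v^{\gamma-2}H^{2p}(Dv)$, the flux estimate $O\bigl(R^{(p-N)/(p-1)}\bigr)\to 0$, and the $\partial\Omega$ integrand $-H^{2p-2}(Dv)H(\nu)\Huno_{\partial\Omega}+\tfrac{(N-1)(p-1)}{p}H^{2p-1}(Dv)H(\nu)$) is correct once that is in place.
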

\begin{proof}
Starting from Lemma  \ref{lemmaS2W=} we argue by approximation.
%
Let $V_k(\xi):\rn\to \RR^+$ be a sequence of $C^3$ functions which approximate $V$ in the norm $C^{2,\alpha}$.  
Fix $R>0$ such that $\overline{\Omega} \subset \bhor{R}$ and set $\gamma=1-N$. Again, $V$ and $V_k$ we will always evaluated at $Dv$ and we  omit this dependency. We first notice that if $v \in C^3(\bhor{R} \setminus \overline{\Omega}) \cap C^{2} (\overline{\bhor{R} \setminus \Omega})$ then by integrating (\ref{2vgammas2}) we obtain
\begin{multline} \label{formulone}
\int_{\bhor{R} \setminus \Omega} 2v^{\gamma}S^2(W_k) =
	 -\gamma(\gamma-1)(p-1) \int_{\bhor{R} \setminus \Omega}  v^{\gamma-2} V_k (V_k)_{\xi_i}v_i  \\ - \gamma \int_{\bhor{R} \setminus \Omega} v^{\gamma-1}((p-1)V_k+(V_k)_{\xi_i}v_i) \tr(W_k)
	 \\ + \gamma \int_{\bhor{R} \setminus \Omega}  v^{\gamma-1}\Big( (p-1)(V_k)_{\xi_i}(V_k)_{\xi_j}v_{ij}-(V_k)_{\xi_i\xi_l}v_{lj}(V_k)_{\xi_i}v_j \Big) \	\\ - \int_{\partial \Omega} \langle v^\gamma\sdueij{W_k}(V_k)_{\xi_i} + \gamma(p-1)v^{\gamma-1} V_k \nabla_\xi (V_k) ; \nu \rangle
\\ \int_{\partial \bhor{R}} \langle v^\gamma\sdueij{W_k}(V_k)_{\xi_i}+ \gamma(p-1)v^{\gamma-1} V_k \nabla_\xi (V_k) ; \nu \rangle \,,
\end{multline}
where we set $W_k = \nabla^2V_k (Dv) D^2v$. 
We notice that \eqref{formulone} still holds for $v=u^{\frac{p}{p-N}}$ where $u$ is the solution of \eqref{ext-pbu}. Indeed, in this case $v \in C^{2,\alpha}(\bhor{R} \setminus \Omega)$ with $H(Dv)$ bounded away from zero (see Theorem \ref{teostime}). Hence, \eqref{formulone} for $v=u^{\frac{p}{p-N}}$ is obtained by approximating $v$ in $C^{2,\alpha}$ by a sequence of $C^3$ functions.

Now, we notice that Theorem \ref{teostime} implies that the pointwise convergence of the elements $\nabla_{\xi}V_k, \nabla_\xi^2V_k, W_k$ is in fact uniform as $k \to \infty$. Hence, by taking the limit as $k\to + \infty$ and using homogeneity property (\ref{Hhomog1}) we find that
\begin{multline} \label{formulone2}
\int_{\bhor{R} \setminus \Omega} 2v^{\gamma}S^2(W) = \\ =
	 -\gamma(\gamma-1)p(p-1) \int_{\bhor{R} \setminus \Omega}  v^{\gamma-2}V^2  - \gamma (2p-1) \int_{\bhor{R} \setminus \Omega} v^{\gamma-1} V \Delta_p^H v \\
  - \int_{\partial \Omega} \langle v^\gamma\sdueij{W}V_{\xi_i} + \gamma(p-1)v^{\gamma-1}V\nabla_\xi V ; \nu \rangle \\
  + \int_{\partial \bhor{R}} \langle v^\gamma\sdueij{W}V_{\xi_i}+ \gamma(p-1)v^{\gamma-1}V\nabla_\xi V ; \nu \rangle \,.
\end{multline}
Since from Theorem \ref{teostime} we have
$$
\lim_{R\to +\infty} \int_{\partial \bhor{R}} \langle v^\gamma\sdueij{W}V_{\xi_i}+ \gamma(p-1)v^{\gamma-1}V\nabla_\xi V ; \nu \rangle = 0
$$
and being $v$ the solution to \eqref{ext-pbv}, by taking the limit as $R\to +\infty$ in \eqref{formulone2} we obtain that
\begin{multline} \label{formulone3}
\int_{\rn \setminus \Omega} 2v^{\gamma}S^2(W) =
	 N(N-1)(p-1)^2 \int_{\rn \setminus \Omega}  v^{\gamma-2}V^2\\
  - \int_{\partial \Omega} \langle v^\gamma\sdueij{W}V_{\xi_i} + \gamma(p-1)v^{\gamma-1}V\nabla_\xi V ; \nu \rangle  \,,
\end{multline}
where we have used that $\gamma=1-N$.

Let us consider the surface integrals appearing in \eqref{formulone3}. By recalling that $\sdueij{W}=-w_{ji} + \delta_{ij} \tr(W)$, the fact that $\nu=-Dv/|Dv|$, and the homogeneity properties (\ref{Hhomog1}),(\ref{Hhomog2}), we obtain
	$$
	\int_{\bd\Omega} v^{\gamma}\langle \sdueij{W}V_{\xi_i};\nu \rangle =
	(p-1) \int_{\bd\Omega}v^{\gamma} \frac{H^{2p-2}}{|Dv|}H_{\xi_k}H_{\xi_i}v_{ki}-\int_{\bd\Omega}v^{\gamma}\Delta^H_pv\frac{H^p}{|Dv|} \,,
	$$
	and from (\ref{HDeltap=M}) we find 
	\begin{equation}\label{ints2ijV}
	\int_{\bd\Omega} v^{\gamma}\langle \sdueij{W}V_{\xi_i};\nu \rangle =-\int_{\bd\Omega}v^{\gamma}H^{2p-2}(Dv)H(\nu)\Huno_\Omega \,.
	\end{equation}
	Since
	\begin{equation*} 
	\int_{\bd\Omega} v^{\gamma-1}\langle{V}\nabla_\xi V;\nu\rangle = -\frac{1}p \int_{\bd\Omega}v^{\gamma}\frac{H^{2p-1}(Dv)}v H(\nu) \,,
	\end{equation*}
	from \eqref{formulone3} and \eqref{ints2ijV} we have
	\begin{eqnarray*}
		&&\int_{\rn\setminus\overline{\Omega}}2v^{\gamma}S^2(W)=\\\nonumber
		&&\quad -\int_{\bd\Omega} v^{\gamma}{H^{2p-2}(Dv)} H(\nu) \Big( \Huno(\bd\Omega) -\gamma\frac{p-1}p\frac{H(Dv)}v \Big)
		+N(N-1)({p-1})^2\int_{\rn\setminus\overline{\Omega}}v^{\gamma-2}V^{2},\nonumber
	\end{eqnarray*}
	which entails (\ref{S2W=equazione-esterno}) by recalling that $v$ solves Problem (\ref{ext-pbv}).
\end{proof}	

From Lemma \ref{lemma-newton} we obtain the following result.
	
\begin{cor}\label{ext-condiz-int}
	Let $\Omega$ and $H$ be as in Lemma \ref{lemma_ident_ext}. Then	
	\begin{equation}\label{BiCicondiz1}
	\int_{\bd\Omega}H(\nu)H^{2p-1}(Du) \left(  \frac{\Huno_\Omega}{N-1}-\frac{p-1}{p-N}\frac{H(Du)}u \right)  \; d\sigma\ge 0,	
	\end{equation}
	where $u$ is the solution of \eqref{ext-pbu}. Moreover, the equality sign holds if and only if the equality sign holds in (\ref{newtonIneq}) for the matrix $W=\nxi^2VD^2v$, being $v=u^\frac{p}{p-N}$ and $V$ as in (\ref{V_def}).
	\end{cor}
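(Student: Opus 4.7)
The plan is to combine, following the Step 1 strategy highlighted in the introduction, the pointwise Newton-type inequality of Lemma \ref{lemma-newton} with the integral identity of Lemma \ref{lemma_ident_ext}. I set $v=u^{p/(p-N)}$ and $W=\nxi^2 V(Dv)\,D^2v$. Since $H\in\hnorm$, the matrix $\nxi^2V(Dv)$ is symmetric and positive definite wherever $Dv\neq 0$, so Lemma \ref{lemma-newton} applies and yields the pointwise bound $S_2(W)\le\frac{N-1}{2N}(\tr W)^2$ throughout $\rn\setminus\overline\Omega$.

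I would then multiply this inequality by $2v^{\gamma}$ with $\gamma=1-N$ (so that $v^{\gamma}>0$) and integrate over $\rn\setminus\overline\Omega$. Using the fact that $v$ solves \eqref{ext-pbv}, and hence $\tr W=\Delta_p^H v=\frac{N(p-1)}{p}\frac{H^p(Dv)}{v}$, a short computation shows that the right-hand side of the integrated inequality equals exactly the bulk term $N(N-1)\frac{(p-1)^2}{p^2}\int v^{\gamma-2}H^{2p}(Dv)\,dx$ appearing on the right-hand side of \eqref{S2W=equazione-esterno}. Subtracting this identity from the integrated inequality cancels the volume contributions and leaves only the clean boundary inequality
\[
\int_{\bd\Omega} H^{2p-2}(Dv)\,H(\nu)\left(\frac{\Huno_\Omega}{N-1}-\frac{p-1}{p}\frac{H(Dv)}{v}\right)d\sigma\ge 0.
\]

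To pass from $v$ back to $u$, I observe that on $\bd\Omega$ we have $v=1$ and $Dv=\frac{p}{p-N}Du$, so that $H(Dv)=\frac{p}{N-p}H(Du)$ by the positive $1$-homogeneity of $H$. Substituting these relations into the boundary integrand and factoring out the resulting positive multiplicative constant transforms the inequality into \eqref{BiCicondiz1}.

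Concerning the equality case, since $v^{\gamma}>0$, equality in \eqref{BiCicondiz1} is equivalent to pointwise equality $S_2(W)=\frac{N-1}{2N}(\tr W)^2$ a.e.\ in $\rn\setminus\overline\Omega$, which is precisely the equality sign in \eqref{newtonIneq} for $W$. The only technical point to be checked is that the equality characterization of Lemma \ref{lemma-newton} requires $\tr W\neq 0$; this is guaranteed here because $H(Dv)$ is strictly positive on $\bd\Omega$ by a standard barrier argument, the decay estimates in Theorem \ref{teostime}(ii) produce a positive lower bound at infinity, and the gradient lower bound of Appendix \ref{appendix_lower_bound} fills in the intermediate region. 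No real obstacle arises; the core of the argument is the fortunate exact cancellation between the integrated Newton inequality and the bulk term in \eqref{S2W=equazione-esterno}, which is what makes the choice $\gamma=1-N$ and $v=u^{p/(p-N)}$ work.
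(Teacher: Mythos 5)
Your proposal follows exactly the same route as the paper: integrate the pointwise Newton bound $S_2(W)\le\frac{N-1}{2N}(\tr W)^2$ against the weight $2v^{1-N}$, use the PDE for $v$ to identify the resulting volume integral with the bulk term in Lemma \ref{lemma_ident_ext}, subtract, and read off the boundary inequality; the equality characterization then comes for free because Newton's inequality is the only estimate used. One small point you should double-check in the last conversion step: since $v=1$, $u=1$ and $H(Dv)=\frac{p}{N-p}H(Du)$ on $\bd\Omega$, the boundary integrand in terms of $u$ acquires the factor $\left(\frac{p}{N-p}\right)^{2p-2}H^{2p-2}(Du)$ and the bracket becomes $\frac{\Huno_{\partial\Omega}}{N-1}-\frac{p-1}{N-p}\frac{H(Du)}{u}$, which does not quite match the power $H^{2p-1}(Du)$ nor the sign of the second term as printed in \eqref{BiCicondiz1}; this is consistent with the consistency check in Lemma \ref{lemmaOmegaWulff-ext}, where only the sign $\frac{p-1}{N-p}$ (not $\frac{p-1}{p-N}$) produces the lower bound $\frac{1}{N}P_H^2(\Omega)/|\Omega|$ that is needed to exploit the anisotropic Minkowski inequality, so the printed corollary appears to contain a typographical slip that your substitution would actually expose.
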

\begin{proof}
	Let us consider the function $v=u^{p/(p-N)}$ in $\rn\setminus\overline{\Omega}$.
	The proof follows by coupling Lemma \ref{lemma-newton} and (\ref{S2W=equazione-esterno}).
	Indeed, since $\tr(W)=\Delta^H_pv$ and  $v$ solves (\ref{ext-pbv}), by (\ref{newtonIneq}) it holds
	$$
	\int_{\rn\setminus\overline{\Omega}}2v^{\gamma}S^2(W)dx \le \frac{(p-1)^2}{p^2} N(N-1)\int_{\rn\setminus\overline{\Omega}}v^{\gamma-2}H^{2p}(Dv) dx \,.
	$$
	From (\ref{S2W=equazione-esterno}) we obtain
	$$
	-(N-1)\int_{\bd\Omega} {H^{2p-2}(Dv)} H(\nu) \Big( \frac{\Huno_\Omega}{N-1} -\frac{p-1}p\frac{H(Dv)}v \Big)\le 0,
	$$		
	which is equivalent to (\ref{BiCicondiz1}) for the function $u$ recalling that 
	$$
	H(Dv)=\frac p{(N-p)} u^{\frac{N}{p-N}}H(Du).
	$$
	Since the only involved inequality in this argument is \eqref{newtonIneq} applied to $W$, the characterization of the equality case follows.
\end{proof}

\section{Steps 2 and 3 - Proof of main Theorems}  \label{section_final_proofs}
In this section we complete the proof of the main theorems. More precisely, we tackle Steps 2 and 3 as outlined in Subsection \ref{subsect_ideas}.
Step 2 is achieved in Lemmas \ref{lemmaOmegaWulff-int} and \ref{lemmaOmegaWulff-ext} for the interior and exterior problems, respectively. Step 3 is carried out in the final part of the proof of Theorems \ref{thm_serrin_int}, \ref{thm_serrin_ext}, \ref{thm_agomaz}.

\subsection{Proof of  Theorem \ref{thm_serrin_int}} \label{subsect_int}

We start with the proof of Theorem \ref{thm_serrin_int}. In the next lemma we show that adding the overdetermined condition (\ref{HDuConst-int}) to Problem (\ref{int-pbu}) is equivalent
to imposing the condition that the equality sign holds for the matrix $W=\nxi^2V(Du) D^2u$ in \eqref{newtonIneq}. Later, we will use these conditions to obtain the Wulff shape characterization of $\Omega$.

\begin{lemma}\label{lemmaOmegaWulff-int}
Let $\Omega \subset \rn$ be a bounded domain with boundary of class $C^{2,\alpha}$ and let $H$ be a norm of $\rn$ in the class $\hnorm$, $p>1$. Let $u$ be the solution to Problem (\ref{int-pbu}).

The equality sign is attained in \eqref{newtonIneq} for the matrix $W=\nabla^2_\xi V(Du)D^2u$ if and only if $H(Du)$ is constant on $\bd\Omega$.
%
%
\end{lemma}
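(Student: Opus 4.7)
The plan is to prove the two implications separately by combining the integral tools already available: Corollary \ref{int-condiz-int} together with the Pohozaev identity \eqref{pohoz} and the divergence identity associated with the PDE for one direction, and a pointwise computation exploiting the Euler identity for $\nxi V$ for the other.

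\emph{From $H(Du) = C$ on $\bd\Omega$ to equality in Newton's inequality.} I would first integrate $\Delta^H_p u = -1$ over $\Omega$; applying the divergence theorem (with $\nu$ inward) and observing that, since $u=0$ on $\bd\Omega$ with $u>0$ inside, one has $Du = |Du|\,\nu$ and hence $\langle\nxi H(Du);\nu\rangle = H(\nu)$ by homogeneity, this yields
$$|\Omega| = \int_{\bd\Omega} H^{p-1}(Du)\,H(\nu)\,d\sigma = C^{p-1}\,P_H(\Omega).$$
Next, Pohozaev \eqref{pohoz} together with $\int_{\bd\Omega}\langle x;\nu\rangle\,d\sigma = -N|\Omega|$ gives
$$(N(p-1)+p)\int_\Omega u\,dx \;=\; N(p-1)\,C^p\,|\Omega| \;=\; N(p-1)\,C^{2p-1}\,P_H(\Omega) \;=\; N(p-1)\int_{\bd\Omega} H(\nu)\,H^{2p-1}(Du)\,d\sigma,$$
which is precisely the equality case of \eqref{ineq_interior}. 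Corollary \ref{int-condiz-int} then forces $W = \lambda\,Id$ in $\Omega$.

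\emph{From $W = \lambda\, Id$ to the overdetermined condition.} Since $\tr(W) = \Delta^H_p u = -1$, necessarily $\lambda = -1/N$. Using the Euler identity $V_{\xi_i}(\xi) = \tfrac{1}{p-1}V_{\xi_i\xi_j}(\xi)\,\xi_j$ (valid since $\nxi V$ is $(p-1)$-homogeneous) together with the symmetry of $\nxi^2 V(Du)$ and $D^2 u$, I would compute
$$\pd{H^p(Du)}{x_k} \;=\; p\,V_{\xi_i}(Du)\,u_{ki} \;=\; \frac{p}{p-1}\,u_j\,V_{\xi_j\xi_i}(Du)\,u_{ik} \;=\; \frac{p}{p-1}\,u_j\,W_{jk} \;=\; -\frac{p}{N(p-1)}\,u_k.$$
Consequently $\nabla\!\bigl(H^p(Du) + \tfrac{p}{N(p-1)}u\bigr)\equiv 0$ in $\Omega$; by connectedness this quantity is a single constant throughout, and evaluating on $\bd\Omega$, where $u=0$, shows that $H^p(Du)$, and hence $H(Du)$, is constant on $\bd\Omega$.

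The main subtlety to address is regularity: for general $p>1$, $D^2u$ exists only a.e., so the equality case in \eqref{newtonIneq} must be interpreted pointwise a.e., and the above differential identity likewise holds a.e. Since both sides are continuous in $\Omega$ (the right-hand side being $Du \in C^\alpha$ by Lemma \ref{lemma_regularity_int}), the primitive $H^p(Du) + \tfrac{p}{N(p-1)}u$ is well-defined and constant. On and near $\bd\Omega$ no degeneracy occurs: the standard barrier argument invoked in the proof of Lemma \ref{lemma_regularity_int} yields $H(Du)$ bounded away from zero and $u \in C^{2,\alpha}$ in a neighborhood of $\bd\Omega$, so the evaluation step is classical there.
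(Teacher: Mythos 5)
Your proof is correct and takes essentially the same approach as the paper: the direction $H(Du)=C \Rightarrow$ equality is handled, as in the paper, by integrating the PDE to get $|\Omega|=C^{p-1}P_H(\Omega)$, inserting this into the Poho\v{z}aev identity, and recognizing the equality case of Corollary \ref{int-condiz-int}; the converse likewise reduces to showing that $V(Du)+\tfrac{u}{N(p-1)}$ has vanishing gradient. What you do differently in that converse step is a genuine, if local, simplification: rather than substituting the explicit inverse formula \eqref{eq_inversa_D2V_II} into $D^2u=-N^{-1}(\nabla_\xi^2 V(Du))^{-1}$ and unwinding the homogeneity relations for $H$ and $H_0$ (the route the paper takes via \eqref{uij}), you exploit the $(p-1)$-homogeneity of $\nabla_\xi V$, i.e.\ $V_{\xi_i}(\xi)=\tfrac{1}{p-1}V_{\xi_i\xi_j}(\xi)\xi_j$, together with the symmetry of $\nabla_\xi^2 V$ to get $\partial_k V(Du)=\tfrac{1}{p-1}u_j W_{jk}$ directly, so $W=-N^{-1}Id$ yields the conclusion without invoking Lemma \ref{lem_inversa_D2V} in this step. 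One point to tighten: the closing line ``by connectedness this quantity is a single constant throughout'' is slightly too quick, because the pointwise identity is available only on $\Omega\setminus\{Du=0\}$, and continuity plus a.e.\ vanishing derivative alone does not force constancy (think of Cantor-type functions). The paper closes the gap by noting $\{Du=0\}$ has empty interior and matching the constants across the connected components of $\overline{\Omega}\setminus\{Du=0\}$ using the continuity of $V(Du)$ on $\overline\Omega$; an alternative and arguably cleaner closure, consistent with your emphasis on regularity, is to observe that $\nabla_\xi V(Du)\in W^{1,2}(\Omega)$ by Lemma \ref{lemma_regularity_int}, so the identity $\nabla\bigl(V(Du)+\tfrac{u}{N(p-1)}\bigr)=0$ holds in the weak sense and a Sobolev function with vanishing weak gradient on a connected open set is constant.
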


\begin{proof}
	Let us assume that the equality sign in (\ref{newtonIneq}) is attained by $W$.
	By the characterization of the equality case in Lemma \ref{lemma-newton}, for every $x\in\Omega$ there exists a function $\la(x)$ such that such that $W(x)=\la(x)\, Id$.
	Since $\tr(W)=\Delta^H_pu=-1$ in $\Omega$, then $\la(x)=-1/N$. Hence, $D^2u=-N^{-1} (\nabla_\xi^2V(Du))^{-1}$, that is from \eqref{eq_inversa_D2V_II}
	\begin{equation}\label{uij}
	u_{kj}(x)=-\frac{H^{-p}(Du(x))}{N(p-1)} \left(u_k(x) u_j(x)+(p-1)H^2(Du(x)) \partial_{\eta_k\eta_j}H_0(\nabla_\xi H(Du(x))) \right) \,.
	\end{equation}

	
\noindent Let $x \in \overline{\Omega}$ be such that $Du(x) \neq 0$. By using the expression of $u_{ij}$ in (\ref{uij}) we have
	\begin{equation*}
	\begin{aligned}
	\partial_{x_j}V(Du) & = H^{p-1}(Du) H_{\xi_k}(Du)u_{kj} \\
	&= -\frac{1}{N(p-1)} H^{-1}(Du)H_{\xi_k}(Du)\Big(u_ku_j+(p-1)H^2(Du)\partial_{\eta_k\eta_j}H_0(\nabla_\xi H)\Big) \\
	&= - \frac{u_j(x)}{N(p-1)} \,,
	\end{aligned}
	\end{equation*}
where the last equality follows from  the homogeneity properties (\ref{Hhomog1}) for $H$ and (\ref{Hhomog2}) for $H_0$. 
This implies that 
$$
V(Du(x)) = - \frac{u(x)}{N(p-1)} + d_i
$$
in any connected component $\Omega_i$ of $\overline{\Omega} \setminus \{Du = 0\}$ for some constants $d_i$. 

We notice that the set $\{Du=0\}$ is strictly contained in $\Omega$, since $H(Du)>0$ on $\partial \Omega$, and it has no interior points (this immediately follows by arguing by contradiction and testing $\Delta_p^H u=-1$ in $\Omega$ with a positive test function with support in $\{Du=0\}$).
Since $V(Du)$ is continuous in $\overline\Omega$, then the $d_i$'s coincide, i.e. 
$$
V(Du(x))=- \frac{u(x)}{N(p-1)} + d 
$$
in $\overline \Omega$ for some $d$. This implies that  $V(Du(x))$ is constant on $\partial \Omega$, and hence $H(Du)$ is constant on $\partial \Omega$.

Now, assume that $H(Du)=C$ on $\partial \Omega$. It is enough to prove that the equality sign holds in \eqref{ineq_interior}, i.e. that
$$
N(p-1) C^{2p-1} P_H(\Omega) = \big( N(p-1) + p \big) \int_\Omega u \, dx \,.
$$
We notice that by integrating $\Delta_p^H u = -1 $ in $\Omega$ and using that $H(Du)=C$ we find that 
$$
|\Omega|= C^{p-1} P_H(\Omega) \,,
$$
and from Lemma \ref{lemma-pohoz} we obtain
$$
\big( N(p-1) + p \big) \int_{\Omega}u\,dx=(p-1)C^p N |\Omega| = N(p-1) C^{2p-1} P_H(\Omega) \,,
$$
and we conclude.
\end{proof}

\begin{proof}[Proof of Theorem \ref{thm_serrin_int}]
Since $H(Du)=C$ on $\partial \Omega$, from Lemma \ref{lemmaOmegaWulff-int} we have that $W=\lambda (x) Id$. As done in the proof of Lemma \ref{lemmaOmegaWulff-int}, this implies that $\lambda$ is constant and $D^2 u = -N^{-1} (\nabla_\xi V(Du))^{-1}$ and then, from \eqref{eq_inversa_D2V_II}, $D^2u$ is given by \eqref{uij}. Being $\Delta_p^H u = -1$ and using \eqref{HDeltap=M} on $\partial \Omega$, we have
$$
\begin{aligned}
\Huno_{\partial \Omega} & = H^{1-p}(Du) \big(1 + (p-1) H^{p-2}(Du) H_{\xi_k}(Du)  H_{\xi_i}(Du) u_{ki} \big) \\
& = \frac{N-1}{N} H^{1-p}(Du) \,,
\end{aligned}
$$
where in the last equality we have used \eqref{uij}, \eqref{Hhomog1} and \eqref{Hhomog2}. Since $H(Du)=C$ on $\partial \Omega$ we have that 
 $\Huno_{\partial \Omega}$ is constant, and $\Omega$ is Wulff shape by Alexandrov's Theorem (see Theorem \ref{AleksandrovThm}). Formula \eqref{u=} follows at once.
\end{proof}

\subsection{Proof of Theorem \ref{thm_serrin_ext}} \label{subsect_ext}
We first show that adding the overdetermined condition (\ref{HDuConst-int}) to problem (\ref{int-pbu}) is equivalent
to imposing that the equality sign holds for the matrix $W=\nxi^2V(Dv) D^2v$ in Newton's inequality \eqref{newtonIneq}, which corresponds to Step 2 in the description in Subsection \ref{subsect_ideas}. Later, we will use these conditions to obtain the Wulff shape characterization of $\Omega$.

\begin{lemma}\label{lemmaOmegaWulff-ext}
	Let $\Omega$ be a convex domain with boundary of class $C^{2,\alpha}$, $H$ be a norm of $\rn$ in the class $\hnorm$, $p>1$, and $V$ be as in (\ref{V_def}). Let $u$ be the solution to Problem (\ref{ext-pbu}) and set $v=u^{\frac p{p-N}}$.
	The equality sign in (\ref{newtonIneq}) is attained by the matrix $W=\nabla^2_\xi V(Dv)D^2v$ if and only if $H(Du)$ is constant on $\bd\Omega$.
\end{lemma}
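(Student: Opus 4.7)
The plan is to mirror the strategy developed for the interior problem in Lemma \ref{lemmaOmegaWulff-int}, working now with the auxiliary function $v = u^{p/(p-N)}$ so that \eqref{ext-pbv} holds and $v$ becomes (up to a multiplicative constant) $H_0^{p/(p-1)}$ in the Wulff-shape case — exactly the choice that makes $W = \nxi^2 V(Dv)\, D^2 v$ a scalar multiple of the identity.

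\emph{Necessity.} Assume equality is attained in \eqref{newtonIneq} by $W$. By Lemma \ref{lemma-newton}, $W(x) = \lambda(x)\, \mathrm{Id}$ on $\rn \setminus \overline\Omega$; tracing $\nxi^2 V(Dv) D^2 v = \lambda\, \mathrm{Id}$ and using \eqref{ext-pbv} fixes $\lambda = \tfrac{p-1}{p}\, H^p(Dv)/v$. Inverting this matrix identity via Lemma \ref{lem_inversa_D2V} (specifically formula \eqref{eq_inversa_D2V_II}) yields
$$
v_{ij} = \frac{1}{p v}\,\Bigl(v_i v_j + (p-1)\, H^2(Dv)\, \partial^2_{\eta_i \eta_j} H_0(\nxi H(Dv))\Bigr).
$$
Contracting with $H^{p-1}(Dv)\, H_{\xi_k}(Dv)$ computes $\partial_{x_j}(V(Dv))$: the first piece equals $V(Dv)\, v_j / v$ by Euler's identity \eqref{Hhomog1}; the second piece vanishes because, setting $\eta_k = H_{\xi_k}(Dv)$, the degree-zero homogeneity of $\partial_{\eta_j} H_0$ forces $\eta_k\, \partial^2_{\eta_k \eta_j} H_0(\eta) = 0$. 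Hence $\partial_{x_j}(V(Dv)/v) = 0$, so $V(Dv)/v$ is constant on the connected exterior domain (with $Dv \neq 0$ throughout, by Lewis-type arguments recalled in Lemma \ref{lemmaLewis}, equivalently from $Dv = \tfrac{p}{p-N} u^{N/(p-N)} Du$ and the positivity of $u$). On $\partial\Omega$ we have $v \equiv 1$, giving $H(Dv)$ constant there; and since $Dv = \tfrac{p}{p-N} Du$ on $\partial\Omega$, we conclude that $H(Du)$ is constant.

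\emph{Sufficiency.} Conversely, suppose $H(Du) = C$ on $\partial\Omega$. By Corollary \ref{ext-condiz-int}, equality in \eqref{newtonIneq} for $W$ is equivalent to equality in the integral inequality \eqref{BiCicondiz1}. Substituting $u = 1$ and $H(Du) = C$ on $\partial\Omega$, and invoking the divergence-theorem identity $C = \tfrac{N-p}{N(p-1)}\, P_H(\Omega)/|\Omega|$ from Appendix \ref{appendix-valoreC} (obtained by integrating $\Delta_p^H u = 0$ in the exterior and using the decay estimates of Theorem \ref{teostime}), the equality to be verified reduces to a relation between $\int_{\partial\Omega} H(\nu)\,\Huno_{\partial\Omega}\, d\sigma$ and $P_H(\Omega)^2/|\Omega|$. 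I would close this step by coupling the nonstrict lower bound from Corollary \ref{ext-condiz-int} with the matching upper bound supplied by the anisotropic Hsiung--Minkowski inequality for convex bodies, forcing the integral to vanish and thereby yielding $W = \lambda\, \mathrm{Id}$.

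\emph{Main obstacle.} I expect the converse direction to be the main technical obstacle: while necessity is a clean pointwise algebraic computation hinging on Lemma \ref{lem_inversa_D2V} together with the homogeneity identities for $H$ and $H_0$, sufficiency requires a sharp anisotropic geometric inequality playing here the role that the anisotropic Pohozaev identity (Lemma \ref{lemma-pohoz}) played in Lemma \ref{lemmaOmegaWulff-int}, and is therefore substantially less immediate than in the interior setting.
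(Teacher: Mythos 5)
Your proof is correct and follows essentially the same strategy as the paper's, split into the same two implications and relying on the same key tools: Lemma \ref{lemma-newton}, the inverse formula \eqref{eq_inversa_D2V_II}, Corollary \ref{ext-condiz-int}, the value of $C$ from Appendix \ref{appendix-valoreC}, and the anisotropic Minkowski inequality from \cite{BCS}. The only substantive divergence is in the forward direction: you establish that $\lambda$ is constant by contracting the inverted identity \eqref{eq_inversa_D2V_II} with $V_{\xi_k}(Dv)$ and using the homogeneity identities \eqref{Hhomog1}--\eqref{Hhomog2} to derive $\partial_{x_j}\big(V(Dv)/v\big)=0$ directly, exactly mirroring the computation in Lemma \ref{lemmaOmegaWulff-int}; the paper instead first deduces constancy of $\lambda$ from a mean-value-theorem observation applied to $W=D_x\nxi V(Dv)=\lambda(x)\,\mathrm{Id}$ (each component $V_{\xi_i}(Dv)$ depends only on $x_i$, forcing $\lambda$ to be a constant), and only then reads off $H(Dv)$ constant on level sets from the explicit formula for $\lambda$. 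Your route buys a more unified treatment of the interior and exterior cases at the cost of a slightly longer computation; the paper's MVT argument is shorter but structurally different from its own interior proof. One small inaccuracy worth flagging: the parenthetical ``equivalently from $Dv=\tfrac{p}{p-N}u^{N/(p-N)}Du$ and the positivity of $u$'' does not by itself justify $Dv\neq 0$ — positivity of $u$ says nothing about $Du$; the nonvanishing of $Du$ genuinely comes from the Lewis-type bound in Lemma \ref{lemmaLewis} / Theorem \ref{teostime}(ii), which you also cite, so the argument stands.
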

\begin{proof}
We first assume that $W$ realizes the equality (\ref{newtonIneq}) and we want to prove that $H(Du)$ is constant on $\partial \Omega$. The characterization of the equality case in Lemma \ref{lemma-newton} implies that there exists a function $\lambda(x)$ such that $W(x)=\lambda(x) Id$ for every $x\in\rn\setminus\overline{\Omega}$.
Since $W(x)=D_x \nxi V(Du(x))$ then the mean value theorem implies that
$$
V_{\xi_i}(Du(x))=f_i(x_i) \,,
$$
for some functions $f_i:\RR \to \RR$, $i=1,\ldots,N$. Hence $\lam=f_i'(x_i)$ for any $i=1,\ldots,N$ and then $\lambda$ is constant. Thus 
$$
\nxi^2V(Dv) D^2v=\lambda Id \,,
$$
for some constant $\lambda$, i.e. $D^2v= \lambda (\nxi^2V(Dv))^{-1}$ and \eqref{eq_inversa_D2V_II} yields
\begin{equation}\label{vij}
	v_{ij}=\frac{\lambda}{p-1}   H^{-p}(Dv)\left( v_iv_j +(p-1)H^2(Dv)\partial^2_{\eta_i\eta_j} H_0(\nabla_\xi H(Dv)) \right) \,.
\end{equation}
Moreover, since $W=\lambda Id$ and $\Delta^H_pv=\tr(W)=N\lambda$, from \eqref{ext-pbv} we have
$$
\lambda = \frac{p-1}{p} \frac{H^p(Dv)}{v},
$$
in $\rn\setminus\overline{\Omega}$ which implies that $H(Dv)$ is constant on every level line of $v$. In particular, we obtain that 
$H^p(Dv)=p\lambda/(p-1)$ on the boundary of $\Omega$, which implies the conclusion.
	
Now we prove the reverse assertion. Assume that $H(Du)$ is constant and equal to $C$ on $\bd\Omega$. By Corollary \ref{ext-condiz-int} we have
	$$
   	\int_{\bd\Omega}H(\nu) \left(  \frac{\Huno_\Omega}{N-1}-\frac{p-1}{p-N}{C} \right) \ge 0.	
	$$
	We recall that the value of $C$ is given by (\ref{valueC}) and hence we obtain
	$$
	\int_{\bd\Omega}H(\nu)\frac{\Huno_\Omega}{N-1}\,\ge \frac{p-1}{p-N}C\; P_H(\Omega)= \frac 1N \frac{P_H^2(\Omega)}{|\Omega|}.
	$$
	Moreover, thanks to the anisotropic Minkowski inequality (see \cite{BCS}, Proposition 2.9), it holds
	$$
	\int_{\bd\Omega}H(\nu)\frac{\Huno_\Omega}{N-1}\, d\sigma(x) \le  \frac 1N \frac{P_H^2(\Omega)}{|\Omega|},
	$$
	and hence equality holds in (\ref{BiCicondiz1}), which implies that $W=\nabla^2_\xi V(Dv)\, D^2v$ achieves the equality sign in \eqref{newtonIneq}.
\end{proof}

We are now able to give the proof of Theorem \ref{thm_serrin_ext}.

\begin{proof}[Proof of Theorem \ref{thm_serrin_ext}.]
The proof is analogous to the one of Theorem \ref{thm_serrin_int}. Indeed, Lemma \ref{lemmaOmegaWulff-ext} implies that $W$ is a multiple of the identity matrix and from \eqref{eq_inversa_D2V_II} we obtain an explicit expression for $D^2v$ (see \eqref{vij}). Since $v$ satisfies \eqref{ext-pbv} and \eqref{HDvcost}, then $\Delta_p^H v$ is constant on $\partial \Omega$ and from \eqref{vij} and \eqref{HDvcost} we obtain that the mean curvature $\Huno_{\partial \Omega}$ of $\partial \Omega$ is constant. Hence, $\Omega$ is Wulff shape from Alexandrov's Theorem \ref{AleksandrovThm}. The explicit expression of $u$ \eqref{uexplext} follows easily.	
\end{proof}

%


\subsection{Proof of Theorem \ref{thm_agomaz}}
As a byproduct of our technique we are able to prove another Wulff characterization in terms of solution to \eqref{ext-pbu} when the integral overdetermined condition (\ref{agomazz-u}) is considered. More precisely we can prove Theorem \ref{thm_agomaz}.

\begin{proof}[Proof of Theorem \ref{thm_agomaz}.]
Let $u$ be the solution to (\ref{ext-pbu}) and let $v=u^\frac{p}{p-N}$.
Thanks to Lemma \ref{ext-condiz-int}, if condition (\ref{agomazz-u}) holds, then equality holds in (\ref{BiCicondiz1}) and hence, following the proof of Corollary \ref{ext-condiz-int}, equality holds in (\ref{newtonIneq}) for the matrix $W=\nabla^2_\xi V (Dv) D^2v$.
Lemma \ref{lemmaOmegaWulff-ext} guarantees that $H(Du)$ is constant on $\bd\Omega$ and hence $u$ solves (\ref{ext-pbu}),(\ref{HDuConst-ext}) and the Wulff shape of $\Omega$ follows by Theorem \ref{thm_serrin_ext}.
\end{proof}


\appendix

\section{The constant $C$ in the exterior problem} \label{appendix-valoreC}
In this section we compute the value of the constant $C$ which appears in the Neumann boundary constraint \eqref{HDuConst-ext}. More precisely we show that, in order to have existence of a solution to the overdetermined problem \eqref{ext-pbu}--\eqref{HDuConst-ext}, an apriori relation between the value of $C$ and the geometry of the set $\Omega$ must hold.

\begin{prop}
	If there exists a solution $u\in C^{2,\alpha}(\rn\setminus\Omega)$ to Problem \eqref{ext-pbu}--\eqref{HDuConst-ext}, then
$$
C=\frac{N-p}{N\,(p-1)}\frac{P_H(\Omega)}{|\Omega|}.
$$
\end{prop}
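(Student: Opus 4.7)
The plan is to combine two integral identities: an energy-type identity and a Pohozaev-type identity. Both are obtained by integrating quantities involving $\Delta_p^H u$ on the annular region $\Omega_R = \bhor{R}\setminus\overline{\Omega}$ and sending $R \to \infty$.

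First I would establish the energy identity
$$
\int_{\rn\setminus\overline{\Omega}} H^p(Du)\,dx = C^{p-1} P_H(\Omega).
$$
This comes from multiplying $\Delta_p^H u = 0$ by $u$, integrating on $\Omega_R$, and using integration by parts together with Euler's identity $\langle\nxi H(Du), Du\rangle = H(Du)$. On $\partial\Omega$ we have $u = 1$ and, since $u$ is constant on $\partial\Omega$ and strictly smaller in the exterior, $Du$ is parallel to the inner normal with $Du = (C/H(\nu))\,\nu$, yielding $\langle H^{p-1}(Du)\nxi H(Du),\nu\rangle = C^{p-1}H(\nu)$ (using \eqref{Hhomog1} for $H$ at $\nu$, since $\nxi H$ is $0$-homogeneous). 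The outer boundary integral on $\partial\bhor{R}$ vanishes as $R\to\infty$ via the decay estimates (i) and (ii) in Theorem \ref{teostime}.

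Next I would derive the Pohozaev-type identity. A direct computation using $\Delta_p^H u = 0$ and the homogeneity of $H$ gives the pointwise identity
$$
\diver\Bigl(\langle x,Du\rangle H^{p-1}(Du)\nxi H(Du) - \tfrac{1}{p}\,xH^p(Du)\Bigr) = \tfrac{p-N}{p}\,H^p(Du),
$$
in $\rn\setminus\overline{\Omega}$. Integrating on $\Omega_R$ and letting $R \to \infty$, the inner boundary integral on $\partial \Omega$ collapses using $\langle x, Du\rangle = (C/H(\nu))\langle x,\nu\rangle$ together with $\int_{\partial\Omega}\langle x,\nu\rangle\,d\sigma = -N|\Omega|$ (the minus sign because $\nu$ is the inner normal); the outer boundary integral again vanishes in the limit. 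This produces
$$
\tfrac{p-N}{p}\int_{\rn\setminus\overline{\Omega}} H^p(Du)\,dx = -\tfrac{(p-1)NC^p}{p}\,|\Omega|.
$$

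Combining the two identities and solving for $C$ yields the claimed formula. The main technical point is verifying that the $\partial\bhor{R}$ boundary integrals vanish as $R\to\infty$; this follows from the decay rates $u\sim H_0^{(p-N)/(p-1)}$ and $H(Du)\sim H_0^{(1-N)/(p-1)}$ provided by Theorem \ref{teostime}, combined with the $R^{N-1}$ growth of the surface area, since $p < N$ makes the resulting exponents of $R$ strictly negative in both identities.
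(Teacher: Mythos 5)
Your proof is correct and follows essentially the same strategy as the paper: two integral identities (an energy identity and a Poho\v{z}aev-type identity), both obtained by integrating divergence quantities over $\bhor{R}\setminus\overline{\Omega}$ and letting $R\to\infty$ via the decay estimates of Theorem~\ref{teostime}. The only organizational differences are minor: for the energy identity $\int_{\rn\setminus\overline{\Omega}}H^p(Du)\,dx = C^{p-1}P_H(\Omega)$ the paper uses the coarea formula together with the constancy of the flux through level sets, while you integrate by parts directly against $u$ (which is a bit cleaner and avoids discussing regularity of level sets); and for the Poho\v{z}aev step the paper carries out several successive integrations by parts, whereas you package the same computation into the single pointwise divergence identity $\diver\bigl(\langle x,Du\rangle H^{p-1}(Du)\nxi H(Du)-\tfrac1p\,xH^p(Du)\bigr)=\tfrac{p-N}{p}H^p(Du)$, which is equivalent but more transparent. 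All sign conventions (inner normal $\nu$, so $\int_{\partial\Omega}\langle x,\nu\rangle\,d\sigma=-N|\Omega|$ and $Du=(C/H(\nu))\nu$) are handled correctly, and the vanishing of the $\partial\bhor{R}$ boundary terms is justified as claimed since $p<N$.
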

\begin{proof}
We split the proof in two main steps in which we compute the anisotropic $p$-capacity, the anisotropic perimeter and the volume of the set $\Omega$	and related quantities.
%
%

We denote by $\nu_t$ the inner unit normal vector to $D_t=\{u>t\}$, and by $\nu=\nu_1$ the inner unit normal to $\Omega$, so that $\nu_t =Du/|Du|$ on $\bd D_t$.

\vspace{0.5em}

\noindent\emph{First step:} $C=\frac{P_H(\Omega)}{\pCap(\Omega)}\frac{C^p}p.$

Let us remark that $\int_{\bd D_t} H^{p-1}(Du)\langle \nxi H(Du);\nu_t\rangle\,d\mathcal{H}^{N-1}$ is independent of $0<t\le 1$: indeed since $\Delta_p^H u=0$ in $\rn \setminus \Omega $, the Divergence Theorem  entails
$$
\int_{\bd\Omega} H^{p-1}(Du)\,\langle \nxi H(Du);\nu\rangle\,d\mathcal{H}^{N-1} - \int_{\bd D_t} H^{p-1}(Du)\langle \nxi H(Du);\nu_t\rangle\,dx=0 \,,
$$
for any $0<t < 1$. Moreover, since $\nu_t=Du/|Du|$ and by the homogeneity property (\ref{Hhomog1}), we obtain
\begin{equation}\label{intDt}
\int_{\bd D_t} H^{p-1}(Du)\langle \nxi H(Du);\nu_t\rangle\,dx=\int_{\bd D_t}H^p(Du)\frac 1{|Du|}\,d\mathcal{H}^{N-1}\,,
\end{equation}
for any $0<t < 1$ and we find
\begin{equation} \label{alphha}
\int_\Omega H^p(Du)\, \frac{1}{|Du|} \,dx= \int_{\bd D_t} H^p(Du) \frac{1}{|Du|}\,d\mathcal{H}^{N-1} \,,
\end{equation}
for any $0<t < 1$. 

Now we compute $\pCap(\Omega)$. By using the definition (\ref{pcap}) and the coarea formula we have
$$
p\; \pCap(\Omega) = \int_{\rn\setminus\Omega}H^p(Du)\,dx=\int_0^1\int_{\bd D_t}H^p(Du)\frac 1{|Du|}\; d\haus\; dt \,.
$$
From \eqref{alphha} we have
$$
p\; \pCap(\Omega) =\int_{\bd\Omega} H^p(Du)\frac 1{|Du|}.
$$
By recalling the boundary condition (\ref{HDuConst-ext}) and again the homogeneity property (\ref{Hhomog1}), together with the definition of anisotropic perimeter (\ref{anis_surf_energy}), we conclude the first step since from the previous relation it follows
$$
p\; \pCap(\Omega)=C^{p-1}\int_{\bd\Omega} H(\nu) = C^{p-1} P_H(\Omega).
$$

\noindent\emph{Second step:} $\frac{C^p}p= \frac{(N-p)}{N(1-p)} \frac{\pCap(\Omega)}{|\Omega|}.$
Since $H(Du)=C$ on $\partial \Omega$, from the Divergence Theorem, Theorem \ref{teostime} and \eqref{pcap} we have
\begin{eqnarray}\label{CpNOmega} \nonumber
C^p N|\Omega| &=& -\int_{\bd\Omega} H^p(Du)\langle x;\nu \rangle = -\int_{\rn\setminus\Omega} \diver(x\, H^p(Du(x)))\\
&=& -Np\pCap(\Omega) -p\int_{\rn\setminus\Omega} H^{p-1}(Du) H_{\xi_k}(Du) \, u_{ki}x_i.
\end{eqnarray}

Notice that, by (\ref{Hhomog1}), we have $ H_{\xi_k}(Du) \, u_{ki}x_i = H_{\xi_k}(Du)(u_{ki}x_i +u_k) -H(Du)$ and hence
\begin{equation}\label{int1}
\int_{\rn\setminus\Omega}H^{p-1}(Du) H_{\xi_k}u_{ki}x_i = -p\pCap(\Omega)+\int_{\rn\setminus\Omega} H^{p-1}(Du)H_{\xi_k}(Du)\Big(u_i x_i \Big)_{k}.
\end{equation}
Moreover, since $\Delta_p^H u=0$ in $\rn\setminus\overline{\Omega}$, we have
$$
H^{p-1}(Du)H_{\xi_k}(Du)\Big(u_i x_i \Big)_{k}=\frac 1p \diver(\langle Du; x \rangle \; \nxi H^p(Du))
$$
and the Divergence Theorem yields
$$
\int_{\rn\setminus\Omega} H^{p-1}(Du)H_{\xi_k}(Du)\Big(u_i x_i \Big)_{k} = \int_{\bd\Omega} H^{p-1}(Du)\langle Du;x \rangle \langle \nxi H(Du); \nu \rangle.
$$
Recalling the homogeneity of $H$ (\ref{Hhomog1}), the fact that $\nu=Du/|Du|$ and the boundary condition (\ref{HDuConst-ext}), the previous equality entails
$$
\int_{\rn\setminus\Omega} H^{p-1}(Du)H_{\xi_k}(Du)\Big(u_i x_i \Big)_{k} =\int_{\bd\Omega} H^{p-1}(Du)\langle \nu;x \rangle \langle \nxi H(Du); Du \rangle=C^p \int_{\bd\Omega}\langle \nu;x \rangle = -C^p N |\Omega|,
$$
and hence coupling with (\ref{CpNOmega}) and (\ref{int1}) we obtain
$$
C^p N|\Omega|= -Np\pCap(\Omega) -p(-p\pCap(\Omega)-C^p N|\Omega|).
$$
This conclude the second step.

\noindent\emph{Conclusion:} From the previous two steps we immediately obtain that
$$
C=\frac{P_H(\Omega)}{|\Omega|}\frac{(N-p)}{N(1-p)} \,,
$$
and the proof is complete.
\end{proof}

\section{A lower bound on the gradient} \label{appendix_lower_bound}

\begin{lemma}\label{lemmaLewis}
Let $u_R$ be the solution to 
\begin{equation}\label{capBR_app}
\begin{cases}
\Delta^H_p u_R=0 &\qquad\text{ in }\bhor{R}\setminus\overline{\Omega} \,,\\
u_R=1 &\qquad\text{ on }\partial\Omega \,, \\
u_R=0 &\qquad\text{ on } \partial\bhor{R} \,.
\end{cases}
\end{equation} 
There exists a constant $C$, not depending on $R$ such that
$$
H(Du(x))\ge C H_0^{\frac{1-N}{p-1}}(x),
$$
for $x\in \overline{\bhor{R}}\setminus\Omega$.
\end{lemma}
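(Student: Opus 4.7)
The plan is to combine an explicit sub/super-solution sandwich with Hopf's lemma at the outer boundary, a barrier argument at $\partial\Omega$, and a rescaling plus compactness step in the interior. First, I would establish a two-sided bound on $u_R$. Fix $R_0<R_1$ such that $\bhor{R_0}\subset\Omega\subset\bhor{R_1}$, and consider the explicit $p$-capacitary potential of the Wulff-shape annulus $\bhor{R}\setminus\overline{\bhor{R_0}}$,
\[
\tilde W_R(x)=\frac{H_0^{(p-N)/(p-1)}(x)-R^{(p-N)/(p-1)}}{R_0^{(p-N)/(p-1)}-R^{(p-N)/(p-1)}},
\]
together with the analogous potential $W^*_R$ of $\bhor{R}\setminus\overline{\bhor{R_1}}$ extended by $1$ on $\overline{\bhor{R_1}}$, which is $p$-superharmonic on $\bhor{R}$. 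Since $\tilde W_R\le 1=u_R\le W^*_R$ on $\partial\Omega$ and all three vanish on $\partial\bhor{R}$, the comparison principle in Lemma \ref{lemma_comp_princ} yields $\tilde W_R\le u_R\le W^*_R$ on $\bhor{R}\setminus\overline{\Omega}$, and evaluating the barriers explicitly produces constants $c_1,c_2>0$, independent of $R$, with $c_1 H_0^{(p-N)/(p-1)}(x)\le u_R(x)\le c_2 H_0^{(p-N)/(p-1)}(x)$ on $\bhor{R/2}\setminus\overline{\Omega}$.

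The boundary portions of the claim are then immediate. At $\partial\bhor{R}$, the relation $u_R\ge\tilde W_R$ together with equality on $\partial\bhor{R}$ forces, via Hopf's lemma, $H(Du_R)\ge H(D\tilde W_R)\ge c\, R^{(1-N)/(p-1)}$ on $\partial\bhor{R}$, and this extends to a uniform neighborhood of the outer boundary. At $\partial\Omega$, the $C^{2,\alpha}$ regularity of $\partial\Omega$ together with a standard interior-Wulff-ball barrier gives $H(Du_R)\ge c'>0$ on $\partial\Omega$ uniformly in $R$, matching the claim since $H_0$ is bounded on $\partial\Omega$.

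For an interior point $x_0$ with $\rho=H_0(x_0)$ in the intermediate range $[CR_1,R/C]$, I would pass to the rescaled function $v_\rho(y)=\rho^{(N-p)/(p-1)}u_R(\rho y)$, which is $p$-harmonic and, by the sandwich above, satisfies $c_1 H_0^{(p-N)/(p-1)}(y)\le v_\rho(y)\le c_2 H_0^{(p-N)/(p-1)}(y)$ on a fixed Wulff annulus $A_M=\{1/M<H_0<M\}$ centered at the origin. Uniform interior $C^{1,\alpha}$ regularity for $p$-harmonic equations (Di Benedetto--Tolksdorf--Lieberman) renders $\{v_\rho\}$ precompact in $C^1_{\rm loc}(A_M)$, and any limit $v_\infty$ is a $p$-harmonic function pinched between the two non-constant barriers $c_iH_0^{(p-N)/(p-1)}$, hence itself non-constant. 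Invoking an anisotropic analog of Lewis's no-critical-point theorem for $p$-capacitary potentials of convex rings---or, equivalently, applying the strong comparison principle to $v_\infty$ against the barriers---one concludes that $v_\infty$ has no critical points on the compact Wulff sphere $\{H_0=1\}$, and a compactness/contradiction argument then yields the uniform lower bound $|Dv_\rho(y_0)|\ge c_5>0$ for $y_0$ with $H_0(y_0)=1$. Rescaling back gives $H(Du_R(x_0))\ge c_5\,\rho^{(1-N)/(p-1)}=c_5 H_0^{(1-N)/(p-1)}(x_0)$.

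I expect the last step to be the main obstacle. Guaranteeing the uniform-in-$\rho$, uniform-in-$R$ lower bound on $|Dv_\rho|$ at points $y_0$ with $H_0(y_0)=1$ amounts to ruling out critical points of the limit $v_\infty$ on that compact set. The sandwich forces $v_\infty$ to be non-constant with the correct asymptotic form, but upgrading this qualitative information to a quantitative non-vanishing of the gradient rests on a nontrivial geometric property of $p$-harmonic functions caught between nested Wulff-convex level sets---namely, the anisotropic counterpart of Lewis's theorem, whose proof parallels the Euclidean case.
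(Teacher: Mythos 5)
Your strategy diverges from the paper's in the crucial interior step, and that divergence introduces a genuine gap. The paper follows Lewis's original 1977 argument directly: after establishing the boundary estimates analogous to your \eqref{lewis_1} via the annular Wulff barrier and Harnack's inequality, it observes that $u_R(\lambda x)$ is $\Delta_p^H$-harmonic on $E_\lambda=\bhor{R/\lambda}\setminus\overline{\Omega}$, applies the weak comparison principle to deduce
$u_R(\lambda x)\le u_R(x)-c\,\delta^{\frac{1-N}{p-1}}(\lambda-1)$
for all $x\in E_\lambda$, divides by $\lambda-1$ and lets $\lambda\to1^+$ to obtain a lower bound on the radial directional derivative $-Du(x)\cdot x$, and finally uses the dual-norm inequality $-Du\cdot x\le H(Du)H_0(x)$ to convert this into the gradient bound. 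No compactness, no limits of rescaled solutions, no critical-point theorem.

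By contrast, your interior argument hinges on an ``anisotropic analog of Lewis's no-critical-point theorem,'' which is precisely the qualitative content of the lemma you are trying to prove, so the argument as written is circular unless you supply an independent proof of that theorem for $\Delta_p^H$. You flag this as the ``main obstacle,'' but it is not a technicality: even granting a qualitative no-critical-point statement, the compactness step does not automatically yield the uniform quantitative lower bound you need, because the rescaled family $v_\rho$ depends on two parameters $(\rho,R)$ simultaneously, and different subsequences can converge to different $p$-harmonic limits pinched between the same barriers; ruling out a sequence of points where $|Dv_{\rho_n}|\to0$ would require either uniqueness of the blow-down or a quantified version of the no-critical-point property, neither of which you establish. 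The Lewis scaling trick that the paper uses sidesteps all of this by exploiting the global convexity of the ring directly at the level of the comparison principle. I would recommend replacing the rescaling-plus-compactness interior argument with that dilation comparison: once you have \eqref{lewis_1}, compare $u_R(\lambda\,\cdot)$ with $u_R$ on $E_\lambda$, differentiate at $\lambda=1$, and invoke the duality inequality $\langle -Du;x\rangle\le H(Du)H_0(x)$. The boundary portions of your argument (barrier at $\partial\Omega$, Hopf-type estimate at $\partial\bhor{R}$) are sound but become unnecessary once the dilation argument is in place, since it covers all of $\overline{\bhor{R}}\setminus\Omega$ in one stroke.
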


\begin{proof}
We closely follow the proof of Lemma 2 in \cite{Le}, which needs to be adapted to the anisotropic setting.

To simplify the notation we set $E=\bhor{R}\setminus\overline{\Omega}$. Since $\partial E = \partial \bhor{R} \cup \partial \Omega$ is of class $C^{2,\alpha}$ and $H$ is uniformly convex, $E$ satisfies an interior touching ball condition of a radius $\delta$, i.e. for any $x_0 \in \partial E$ there exists a ball $\bhorx{\delta}{z} \subset E$ with $x_0 \in \partial \bhorx{\delta}{z}$.

We first prove that there exist $\lam_0>1$ and $c>0$ such that for any $y \in \partial E$ we have that 
\begin{equation} \label{lewis_1}
\begin{split}
 & 1-u(\lam y) \geq \delta^{\frac{1-N}{p-1}}c (\lam -1) \quad \text{if } y \in \partial \Omega \,, \\
 & u(y/\lam) \geq \delta^{\frac{1-N}{p-1}}c(\lam-1) \quad \text{if } y \in \partial \bhor{R} \,,
\end{split}
\end{equation} 
for any $1<\lam \leq \lam_0$. 

We prove \eqref{lewis_1} by using a barrier type argument. Let $x_0 \in \partial E$ and we choose $z$ as above, i.e. so that $x_0 \in \partial \bhorx{\delta}{z}$. Let 
$$
v(x)=
\begin{cases}
0 & \qquad\text{ in }\rn\setminus \bhorx{\delta}{z} \,, \\
1 & \qquad\text{ in }\bhorx{\delta/2}{z}\,,\\
\dfrac{H_0^{\pesp}(x-z)-\delta^{\pesp}}{\delta^{\pesp}(2^{\frac{N-p}{p-1}}-1)} & \qquad\text{ in }\bhorx{\delta}{z}\setminus\overline{\bhorx{\delta/2}{z}} \,,
\end{cases}
$$
and we notice that $v\in W_0^{1,p}(\rn)$, $\Delta_p^H v = 0$ in $\bhorx{\delta}{z}\setminus\overline{\bhorx{\delta/2}{z}}$ and, 
from (\ref{HDHo=1}), we have that $H(Dv(x))\ge c_1 \delta^{\frac{1-N}{p-1}}$ in $\bhorx{\delta}{z}\setminus\overline{\bhorx{\delta/2}{z}}$, where $c_1$ depends only on $N,p$. 
Now assume that $x_0 \in \partial \Omega$. From the convexity of $\Omega$, the strict convexity of $\bhor{R}$, there exist $\la_0,\mu$, depending only on $\Omega,N,p$ such that $\lam x_0 \in \bhorx{\delta}{z}\setminus\overline{\bhorx{\delta/2}{z}}$ and
$$
v(\la x_0)\ge \delta^{\frac{1-N}{p-1}} \mu (\la-1)
$$ 
for $1<\la\le\la_0$. Analogously, we can prove that if $x_0 \in \bhor{R}$, then $\lam^{-1} x_0 \in  \bhorx{\delta}{z}\setminus\overline{\bhorx{\delta/2}{z}}$ and
$$
v(x_0/\lam)\ge \delta^{\frac{1-N}{p-1}}\mu (\la-1)
$$ 
for $1<\la\le\la_0$. Notice that $\la_0$ can be chosen uniformly for any $x_0 \in \partial \Omega$. 

Let $G$ be such that $\bhor{R}\setminus\overline{\Omega}= G + \bhor{\delta/2}$ (here we mean the Minkowski sum). Since $0<u_R<1$ in $\bhor{R}\setminus\overline{\Omega}$, from Harnack's inequality (see \cite{Se_acta}) we have that 
$$
\min_G \{1-u_R,\,u_R\} \geq A \,,
$$
where $A>0$ depends only on $G$, $p$ and $n$. From the weak comparison principle we have that $1-u \geq A v$ in $\bhorx{\delta}{z}\setminus\overline{\bhorx{\delta/2}{z}}$ if $x_0 \in \partial \Omega$ and $u \geq Av$ in $\bhorx{\delta}{z}\setminus\overline{\bhorx{\delta/2}{z}}$ if $x_0 \in \partial \bhor{R}$, and hence \eqref{lewis_1} is proved.  

For $1<\lam \leq \lam_0$ we define $E_\lam=\bhor{R/\lam}\setminus\overline{\Omega}$ and we notice that the function $u_R(\lam x)$ satisfies $\Delta_p^H u_R(\lam x) =0$ in $E_\lam$. From \eqref{lewis_1}, the weak comparison principle implies that 
$$
u_R(\lam x) \leq u_R - \delta^{\frac{1-N}{p-1}} A \mu (\lam-1)
$$
in $E_\lam$ for $1<\lam \leq \lam_0$. Since 
$$
\delta^{\frac{1-N}{p-1}} A \mu \leq \lim_{\lam \to 1} \frac{u_R(x) - u_R(\lam x)}{\lam -1} = -Du(x) \cdot x \,,
$$
for any $x \in E$, Cauchy-Schwarz inequality yields 
$$
H(Du) \geq  A \mu \frac{\delta^{\frac{1-N}{p-1}}}{H_0(x)} \,.
$$
Being $\delta \leq H_0(x)$ in $E$, the conclusion follows from \eqref{norms equiv}.
\end{proof}

\end{document}